\documentclass[a4paper,leqno]{amsart}

\usepackage[margin=3cm]{geometry}

\usepackage{amsmath,amssymb,amsfonts,amsthm,mathrsfs}
\usepackage{enumitem,xcolor,microtype,lmodern,url}
\usepackage[overload]{empheq}

\usepackage[colorlinks=true,linkcolor=black,citecolor=black]{hyperref}



\newcommand{\A}{\mathbb A}

\newcommand{\D}{\mathbb D}
\newcommand{\R}{\mathbb R}
\newcommand{\N}{\mathbb N}
\newcommand{\T}{\mathbb T}

\newcommand{\F}{\mathcal F}

\newcommand{\DD}{\mathcal D}
\newcommand{\EE}{\mathcal E}

\newcommand{\Ereg}{{\mathcal E}_{\rm reg}}
\newcommand{\Ereginit}{{\mathcal E}_{{\rm reg}\mid t=0}}
\newcommand{\Edrag}{{\mathcal E}_{\rm drag}}
\newcommand{\Edraginit}{{\mathcal E}_{{\rm drag}\mid t=0}}
\newcommand{\EBD}{{\mathcal E}_{\rm BD}}

\newcommand{\EBDreg}{{\mathcal E}_{{\rm BD,reg}}}
\newcommand{\EBDdrag}{{\mathcal E}_{\rm BD, drag}}
\newcommand{\EBDdraginit}{{\mathcal E}_{{\rm BD, drag}\mid t=0}}
\newcommand{\Dreg}{{\mathcal D}_{\rm reg}}
\newcommand{\Ddrag}{{\mathcal D}_{\rm drag}}
\newcommand{\DBD}{{\mathcal D}_{\rm BD}}
\newcommand{\DBDreg}{{\mathcal D}_{\rm BD,reg}}
\newcommand{\DBDdrag}{{\mathcal D}_{\rm BD,drag}}

\def\({\left(}
\def\){\right)}
\def\<{\left\langle}
\def\>{\right\rangle}

\DeclareMathOperator{\Div}{div}

\def\Eq#1#2{\mathop{\sim}\limits_{#1\rightarrow#2}}
\def\Tend#1#2{\mathop{\longrightarrow}\limits_{#1\rightarrow#2}}

\newcommand{\eps}{\epsilon}
\newcommand{\dd}{\mathrm d}
\renewcommand{\r}{\varrho}

\theoremstyle{plain}
\newtheorem{theorem}{Theorem} [section]
\newtheorem{lemma}[theorem]{Lemma}
\newtheorem{corollary}[theorem]{Corollary}
\newtheorem{proposition}[theorem]{Proposition}

\theoremstyle{remark}
\newtheorem{remark}[theorem]{Remark}

\theoremstyle{definition}
\newtheorem{definition}[theorem]{Definition}

\DeclareMathOperator{\RE}{Re}
\DeclareMathOperator{\IM}{Im}

\numberwithin{equation}{section}


\title[Global solutions for isothermal fluids]{Global weak solutions
  for quantum isothermal fluids}

\begin{document}

\author[R. Carles]{R\'emi Carles}
\author[K. Carrapatoso]{Kleber Carrapatoso}
\author[M. Hillairet]{Matthieu Hillairet}

\address[R. Carles]{Univ Rennes, CNRS\\ IRMAR - UMR 6625\\ F-35000
  Rennes, France} 
\address[K. Carrapatoso]{CMLS\\\'Ecole
  polytechnique\\ Institut Polytechnique de Paris\\ 91128 Palaiseau
  cedex\\ France}
\address[M. Hillairet]{Institut Montpelli\'erain
  Alexander Grothendieck\\ Univ. Montpellier\\ CNRS\\ Montpellier\\ France}
\email{Remi.Carles@math.cnrs.fr}
\email{kleber.carrapatoso@polytechnique.edu}
\email{Matthieu.Hillairet@umontpellier.fr}
\begin{abstract}
We construct global weak solutions to isothermal quantum Navier-Stokes
equations, with or without Korteweg term, in the whole space of
dimension at most three. Instead of working on the initial set of
unknown functions, we consider an equivalent reformulation, based on a
time-dependent rescaling, that we
introduced in a previous paper to study the large time behavior, and
which provides suitable a priori estimates, as opposed to the initial
formulation where the potential energy is not signed. We proceed by
working on tori whose size eventually becomes infinite. On each 
fixed torus, we consider the equations in the presence
of drag force terms. Such equations are solved by regularization, and
the limit where the drag force terms vanish is treated by resuming the
notion of renormalized solution developed by I.~Lacroix-Violet and
A.~Vasseur. We also establish global existence of weak solutions for
the isothermal Korteweg equation   (no viscosity), when
initial data are well-prepared, in the sense that they stem from a
Madelung transform. 
\end{abstract}

\maketitle

\tableofcontents

\section{Introduction}\label{sec:intro}
In this paper we consider the isothermal fluid equations in $\R^d$ ($d
\le 3$):
\begin{subequations}\label{fluide}
\begin{align}[left = \empheqlbrace\,]
& \partial_t \r + \Div(\r u) = 0, \label{fluide1} \\
& \partial_t(\r u) + \Div (\r u \otimes u) + \nabla \r
= \frac{\eps^2}{2} \r \nabla \left( \frac{\Delta \sqrt{\r}}{\sqrt{\r}}\right) 
+ \nu \Div(\r \D u) ,   \label{fluide2}
\end{align}
\end{subequations}
on some time interval $(0,T)$. Here, the unknowns are the density $\r: (0,T) \times \mathbb R^d \to   [0,\infty)$ and the velocity field $u: (0,T) \times \mathbb R^d \to \mathbb R^d$ of the fluid.
We denote by $\D u = \tfrac12 (\nabla u + \nabla u^{\top}),$ the symmetric part of $\nabla u$, and $\eps \geq 0,$  $\nu \ge 0$ (with $(\eps, \nu) \neq (0,0)$) are given parameters.
When $\eps = 0$ and $\nu >0$, the system \eqref{fluide} corresponds to the
isothermal quantum Navier--Stokes equations; the case $\eps, \nu >0$
corresponds to the isothermal quantum Navier--Stokes--Korteweg
equations; the case $\eps >0$ and $\nu=0$ to the quantum Euler
equation.   The term $\nabla \rho$ on the left-hand side corresponds to the gradient of the pressure of an isothermal fluid. 
Analytically, this corresponds to a limiting case of equations for
polytropic gases where the pressure is given by a power-law $P(\rho) =
a \rho^{\gamma}$ with $\gamma > 1$ and $a>0$. Such isothermal models are
marginally studied in the literature (see
\cite{Jungel} for the quantum Navier-Stokes equations on $\T^d$, $d\le
2$, and \cite{PlotnikovWeigant,VaPl17} for the 2D Newtonian Navier-Stokes case
on a bounded domain) whereas they have been derived in a quantum
context \cite{BruMe10}. {\color{black} We emphasize in the case of the
  Euler equation 
  ($\eps=\nu=0$),in space dimension $d=1$, the existence of
global weak solution is obtained in \cite{LeFlochShelukhin2005} by the
vanishing viscosity method, under weak assumptions on the initial
data: $0\le \r_0\in L^\infty(\R)$ and $|u_0(x)|\lesssim 1+|\ln
\r_0(x)|$.} 
In a previous paper \cite{CCH}, we studied the large-time behavior of
solutions to \eqref{fluide} with $\eps,\nu\ge 0$, 
under the assumption that sufficiently integrable solutions do exist
globally in time. To our knowledge, the question of the existence of
such solutions remains open,
specifically in the isothermal case.  We answer this question herein
by proving that \eqref{fluide} admits weak solutions globally in time.
The main part of this paper addresses the Navier-Stokes  case $\nu>0$
(with $\eps\ge 0$) for general initial data, while the Korteweg  case
$\nu=0$, $\eps>0$ is considered for well-prepared initial data
(stemming from a Madelung transform), and is much more straightforward.
\smallbreak

Formally, solutions to \eqref{fluide} enjoy the energy
equality
\begin{equation*} 
  E(t) +\int_0^t  D(s) \dd s =E(0),\quad t\ge 0,
\end{equation*}
where the energy is defined by
\begin{equation} \label{Eru}
 E(t) = \frac{1}{2}\int_{\R^d} \left( \r |u|^2 + \eps^2 \left|\nabla
    \sqrt\r\right|^2 \right) +\int_{\R^d} \r \log \r,
\end{equation}
and the dissipation is given by
\begin{equation} \label{Dru}
  D(t) = \nu \int_{\R^d} \r |\D u|^2.
\end{equation}
A feature of the isothermal case is that the pressure part of the
energy, 
\begin{equation*}
  \int_{\R^d} \r \log \r,
\end{equation*}
involves a functional which has no definite sign, as opposed to 
\begin{equation*}
 \frac{1}{\gamma-1} \int_{\R^d} \r^\gamma 
\end{equation*}
in the polytropic case. 
This is one of the reasons
why there are fewer results regarding the global existence of solutions in
the case $\gamma=1$ than in the case $\gamma>1$. Also, because we
consider the case of an unbounded domain $x\in \R^d$, nonzero constant
densities cannot provide finite-energy
solutions to \eqref{fluide}, ruling out
natural candidates for an approach based on relative entropy like in
e.g.\ \cite{BrNoVi17}.
\smallbreak

Following \cite{CCH}, we circumvent this difficulty by considering the
auxiliary unknowns $(R,U)$  as defined by
\begin{equation}
  \label{eq:uvFluid}
  \r(t,x) =
  \frac{1}{\tau(t)^d}R\(t,\frac{x}{\tau(t)}\)
\frac{\|\r_0\|_{L^1}}{\|\Gamma\|_{L^1}},\quad 
  u(t,x) = \frac{1}{\tau(t)} U \(t,\frac{x}{\tau(t)}\) +\frac{\dot \tau(t)}{\tau(t)}x,
\end{equation}
where $\Gamma(y) = e^{-|y|^2}$ and  the function $\tau$ is the global
solution to the nonlinear ODE 
\begin{equation*}
  \ddot \tau = \frac{2}{\tau},\quad \tau(0)=1,\quad \dot\tau(0)=0.
\end{equation*}
We recall (see \cite{CaGa18}) that there exists a unique global
solution $\tau\in C^\infty([0,\infty))$
to this system. This solution remains uniformly bounded from below by a strictly positive constant 
and its  large time behavior is known:
\begin{equation*}
  \tau(t) \Eq t \infty 2 t\sqrt{\log t},\quad \dot \tau(t)\Eq t \infty
  2\sqrt{\log t}.
\end{equation*}
By convention, the space variable for unknowns with
  capital letters will be denoted by $y$, in contrast with the initial
  space variable $x$.
System \eqref{fluide} becomes, in the terms of the new unknown $(R,U)
= (R(t,y), U(t,y))$, 
\begin{subequations}\label{RU}
\begin{align}[left = \empheqlbrace\,]
& \partial_t R+\frac{1}{\tau^2}\Div (R U )=0    \label{RU1}  \\
& \partial_t (R U) +\frac{1}{\tau^2}\Div ( R U \otimes U) +2 y R  + \nabla R  
=\frac{\eps^2}{2\tau^2}R\nabla \( \frac{\Delta  \sqrt{R}}{\sqrt{R}}\)   
+\frac{\nu}{\tau^2} \Div (R \D U) + \frac{\nu \dot \tau}{\tau} \nabla R. \label{RU2}
\end{align}
\end{subequations}
Since the change of unknowns \eqref{eq:uvFluid} preserves the
integrability properties of  density and velocity
unknowns locally in time (we consider velocity \emph{and} space
momenta), we focus in the whole paper on system \eqref{RU}.   

\smallbreak

 An interesting feature of \eqref{RU} is that it is again associated with a  natural energy 
 dissipation estimate, but the new energy involved in this estimate is
 sign-definite and provides 
 important controls for the unknowns. Indeed, as
  exploited in \cite{CCH}, the energy associated to \eqref{RU} reads
  \begin{equation}
    \label{eq:ERU}
    \EE(R,U) = \frac{1}{2\tau^2} \int_{\R^d} \left( R|U|^2 + \eps^2 |\nabla \sqrt{R}|^2 \right)
+ \int_{\R^d} \( R|y|^2 + R\log R\),
  \end{equation}
so that, formally, solutions to \eqref{RU} satisfy the energy equality
\begin{equation}\label{eq:EE+DD=EE0}
  \EE(R,U)(t) +\int_0^t \DD(R , U)(s)  \dd s =\EE(R_0,U_0) - \nu \int_0^t \frac{\dot \tau}{\tau^3} \int_{\R^d} R \Div U    ,\quad t\ge 0,
\end{equation}
where the nonnegative dissipation is given by
\begin{equation} \label{eq:DRU}
  \DD(R,U) 
= \frac{\dot \tau}{\tau^3} \int_{\R^d}\left( R|U|^2 + \eps^2 |\nabla
  \sqrt{R}|^2 \right) 
+ \frac{\nu}{\tau^4} \int_{\R^d} R |\D U|^2.
\end{equation}
In view of the conservation of mass, $\|R(t)\|_{L^1}=\|\Gamma
\|_{L^1}=\pi^d$ for all $t\ge 0$,  we see that the functional $\EE$ is
positive by writing
\begin{equation*}
  \int_{\R^d} \( R|y|^2 + R\log R\) = \int_{\R^d}
R\log\frac{R}{\Gamma}\ge \frac{1}{2\pi^d}\|R-\Gamma\|_{L^1}^2,
\end{equation*}
where the last inequality stems from  Csisz\'ar-Kullback inequality (see
e.g. \cite[Th.~8.2.7]{LogSob}).


\smallbreak

The construction of a positive-definite energy which is dissipated with time is a first building-block to construct 
solutions to \eqref{RU}. However, it is classical in compressible fluid mechanics that
\eqref{eq:EE+DD=EE0} must be completed. 
For instance, studies on compactness of finite-energy solutions to \eqref{RU} 
require to handle the viscous stress $R \mathbb DU.$ Yet, the information
provided by \eqref{eq:EE+DD=EE0} is insufficient (when $\eps =0$) to pass
to the limit in this term (see e.g. \cite{Bre-De-CKL-03,VasseurYu}),
because we lack 
information on the regularity of the density $R$.
More specifically, in the case of \eqref{RU}, with \eqref{eq:EE+DD=EE0} 
alone, it is not clear also how to define
the Korteweg term when $\eps >0.$  
Another important quantity, known as BD-entropy,  introduced  in
 \cite{BD04,Bre-De-CKL-03}, is now standard to handle these difficulties. In the case of \eqref{RU}, it reads
\begin{equation*}
  \EBD (R,U)
= \frac{1}{2\tau^2} \int_{\R^d} 
\left( R|U + \nu \nabla \log R|^2 + \eps^2 |\nabla \sqrt{R}|^2  \right)
+ \int_{\R^d} \left( R|y|^2 + R\log R \right).
\end{equation*}
Exactly as above, the second integral defines a non-negative
functional.  The evolution of this BD-entropy is given formally by
\begin{equation}\label{eq:EBD}
\begin{aligned}
  \EBD(R , U)(t)  &+\int_0^t\DBD(R , U)(s)  \dd s \\
   &=\EBD(R_0 , U_0) + \nu \int_0^t \frac{2 d}{\tau^2} \int_{\R^d} R
  + \nu \int_0^t \frac{\dot \tau}{\tau^3} \int_{\R^d} R \Div U , \quad t\ge 0,
  \end{aligned}
\end{equation}
where the above dissipation is defined by
\begin{equation}\label{eq:DBD}
\begin{aligned}
  \DBD(R,U)
&= \frac{\dot \tau}{\tau^3} \int \left( R|U|^2 + \eps^2 |\nabla
  \sqrt{R}|^2 \right) 
+ \frac{\nu}{\tau^4} \int_{\R^d} R |\A U|^2  \\
&\quad
+\frac{\nu \eps^2}{\tau^4}   \int R|\nabla^2  \log R|^2
+\frac{4 \nu}{\tau^2} \int |\nabla \sqrt{R}|^2 ,
\end{aligned}
\end{equation}
with $\mathbb AU := \frac12 (\nabla U - \nabla U^\top)$ the skew-symmetric part of $\nabla U.$
Hence putting together the energy and the BD-entropy equalities, it holds
\begin{equation}\label{eq:EE+EBD}
  \EE(t)  + \EBD(t) +\int_0^t \(\DD(s) + \DBD(s) \) \dd s = \EE(0) + \EBD(0) 
  + \nu \int_0^t \frac{2 d}{\tau^2} \int_{\R^d} R , \quad t\ge 0,
\end{equation}
and thanks to the conservation of mass and the fact that $\int_0^\infty \tau^{-2}(t) \, \dd t < \infty$, the last term is uniformly bounded.  We note that, in view of \eqref{eq:EBD}, we gain information on the regularity of $R$
when $\nu >0$ which may help in the compactness issue of weak solutions to \eqref{RU}. 
To define the Korteweg term, we may also apply the classical identity:
\begin{equation} \label{eq_Ktwidentity}
R\nabla \( \frac{\Delta  \sqrt{R}}{\sqrt{R}}\)  = {\rm div} (\sqrt{R} \nabla^2 \sqrt{R} - \nabla \sqrt{R} \otimes \nabla \sqrt{R}),
\end{equation}
in view of 
 \begin{equation}\label{eq:equivJungel}
\int_\Omega|\nabla^2 \sqrt
   R|^2 + \int_\Omega|\nabla R^{1/4}|^4\lesssim 
   \int_\Omega R|\nabla^2  \log R|^2\lesssim \int_\Omega|\nabla^2 \sqrt
   R|^2 + \int_\Omega|\nabla R^{1/4}|^4,
 \end{equation}
 which holds true for $\Omega=\R^d$ or  $\T^d$ (see \cite{Jungel,VasseurYu}). 

\smallbreak

The estimates provided by the above energy and BD-entropy turn out to
be fundamental in the construction of a weak solution, and motivate
the following definition: 
\begin{definition} \label{def:weak}
Assume $\nu > 0$ and $\eps \ge 0$. 
Let $(\sqrt{R_0} , \Lambda_0 = (\sqrt{R} U)_0 ) \in L^2(\R^d) \times L^2(\R^d)$.
We call global weak
solution to \eqref{RU}, associated to the initial data $(\sqrt{R_0} , \Lambda_0= (\sqrt{R} U)_0)$, any pair $(R,U)$ such that there exists a collection $(\sqrt{R},\sqrt{R}U,\mathbf S_{K},\mathbf
T_{N})$ satisfying 
\begin{itemize}
\item[i)] The  following regularities:
\begin{align*}
&\(\<y\>+|U|\) \sqrt{R} \in L^{\infty}_{\rm loc}\(0,\infty; L^2 (\R^d)\),\quad \nabla
  \sqrt R \in L^{\infty}_{\rm loc}\(0,\infty; L^2 (\R^d)\),\\ 
& \eps \nabla^2 \sqrt{R} \in L^2_{\rm loc}(0,\infty;L^2(\mathbb
  R^d)) ,\quad 
  \sqrt{\eps} \nabla R^{1/4}\in L^4_{\rm loc}(0,\infty;L^4(\mathbb
  R^d)),\\
&\mathbf T_N \in L^2_{\rm loc}(0,\infty; L^2(\mathbb R^d)) ,
\end{align*}
with the compatibility conditions
\[
\sqrt{R} \ge 0 \text{ a.e. on } (0,\infty)\times \R^d,  \quad   \sqrt{R}U=
0 \text{ a.e. on } \{\sqrt{R} = 0 \}.
\]
\item[ii)] The following equations in $\mathcal D'((0,\infty)\times \mathbb R^d)$
\begin{equation}\label{eq:NSKrevu}
  \left\{
    \begin{aligned}
  & \partial_t\sqrt{R}+\frac{1}{\tau^2}\Div (\sqrt{R} U )=
  \frac{1}{2\tau^2}{\rm Trace}(\mathbf T_N),\\
    &\partial_t ({R}U) +\frac{1}{\tau^2}\Div ( \sqrt{R}U \otimes \sqrt{R}U)
      +2 y |\sqrt{R}|^2 +\nabla \left( |\sqrt{R}|^2 \right)
      \\
      & \phantom{\partial_t (\sqrt{R} \sqrt{R}U) +\frac{1}{\tau^2}\Div }=\Div \left(\dfrac{\nu}{\tau^2} \sqrt R \mathbf S_N
        + \dfrac{\eps^2}{2\tau^2} \mathbf S_K\right) +
      \dfrac{\nu \dot{\tau}}{\tau} \nabla R,
    \end{aligned}
\right.
\end{equation}
with $\mathbf S_N$ the symmetric part of $\mathbf T_N$ and the compatibility conditions:
\begin{align} \label{eq_compnewton}
& \sqrt{R}\mathbf T_{N} = \nabla(\sqrt{R}  \sqrt{R} U) - 2 \sqrt{R}U
  \otimes \nabla \sqrt{R}\,, \\[6pt] 
& \mathbf S_K 
=\sqrt{R}\nabla^2 \sqrt{R} -  \nabla \sqrt{R} \otimes \nabla \sqrt{R}
  \,. \label{eq_compkorteweg} 
\end{align}
\item[iii)] For any $\psi\in C_0^\infty(\R^d)$, 
  \begin{align*}
    &\lim_{t\to 0}\int_{\R^d} \sqrt{R}(t,y)\psi(y) \, \dd y=
    \int_{\R^d} \sqrt{R_0} (y) \psi(y)  \, \dd y , \\
& \lim_{t\to 0}\int_{\R^d} \sqrt{R}(t,y)    (\sqrt{R}U)(t,y)\psi(y) \, \dd y=
    \int_{\R^d} \sqrt{R_0} (y) \Lambda_0(y) \psi(y) \, \dd y.
  \end{align*}
\end{itemize}

\end{definition}

A specific feature of the previous statement is that we define weak solutions to \eqref{RU} in terms of $\sqrt{R}$
and $\sqrt{R}U.$ This is related to the fact that these are the natural quantities that are involved in the energy and 
entropy estimates. By construction, we shall have $\sqrt{R}U = 0$
where $\sqrt{R}=0$ so that, whenever  
$U$ is mentioned, it should be understood as: 
\[
U = \dfrac{\sqrt{R}U}{\sqrt{R}} \mathbf{1}_{\sqrt{R}>0}.
\]
Also, thanks to the regularity estimates obtained on the density, the above weak formulation implies the classical continuity equation (see \cite[Lemma 2.2]{CCH}). On the other hand,
we mention that a solution $(\sqrt{R},\sqrt{R}U)$ in the sense of distributions enjoying the regularity of i)
satisfies furthermore that $\sqrt{R} \in C([0,\infty),L^2(\mathbb R^d)-w)$ and $RU \in C([0,\infty) ; L^1(\mathbb R^2) -w).$ Consequently, we may require the initial conditions in terms of item iii). 
Finally, we do not claim for an energy estimate in our definition,
however we shall derive these solutions  
from approximate finite-energy, finite-entropy solutions, so that the
global weak solutions we construct satisfy: 
There exist absolute constants $C,C'$ such that, for almost all $t
\geq 0$,  there holds: 
\begin{align}  \label{eq:EE+DD<=EE0}
\mathcal E(t) + \int_0^t \mathcal D(s) \,  \dd s &  \leq C (\mathcal E(0)), \\
\label{eq_EBD_weak}
\mathcal E_{\mathrm{BD}}(t) + \int_0^t \mathcal D_{\mathrm{BD}}(s) \,  \dd s &\leq C' (\mathcal E(0), \mathcal E_{\mathrm{BD}}(0)) ,
\end{align}
with $\mathcal E,\mathcal D,\mathcal E_{\mathrm{BD}},\mathcal D_{\mathrm{BD}}$ as defined in \eqref{eq:ERU}-\eqref{eq:DRU}-\eqref{eq:EBD}-\eqref{eq:DBD}.  In terms of our weak solutions, 
the term $R |\mathbb D U|^2$ appearing in these estimates must be
understood as $|\mathbf S_N|^2$ (and, similarly, $R |\mathbb A U|^2$
as $|\mathbf T_N - \mathbf S_N|^2$ , and $R |\nabla U|^2$ as $|\mathbf
T_N|^2$). 
In addition, item  i) along with \eqref{eq:NSKrevu} imply
the conservation of mass, 
  \begin{equation*}
    \int_{\R^d}R(t,y)\dd y = \int_{\R^d} R_0(y) \dd y ,\quad \forall
    t\ge 0,
  \end{equation*}
which is hence fixed through all the paper. The extra integral terms
present on the right hand side 
of \eqref{eq:EE+DD=EE0} and \eqref{eq:EBD} do not appear in the 
estimates \eqref{eq:EE+DD<=EE0} and \eqref{eq_EBD_weak}: thanks to
Cauchy-Schwarz inequality, and the conservation of mass, they can be
controlled by   
the dissipation $\mathcal D$ (see \cite[Remark~2.13]{CCH} as well as
the proof of Proposition~\ref{prop:sol-1} below).
Note that in the previous
definition, the entropy of $R$ is not mentioned. The reason is the
following lemma.
\begin{lemma}\label{lem:SigmaLlogL}
  Let $d\ge 1$. For all $M>0$, there exists $C(M)$ such that for all
  $f\in H^1\cap \F(H^1)(\R^d)$
  satisfying
  \begin{equation*}
    \int_{\R^d}\(1+|y|^2\)|f(y)|^2\dd y +\int_{\R^d}|\nabla
    f(y)|^2\dd y \le M,
  \end{equation*}
  the $L\log L$ norm of $|f|^2$ is controlled by
  \begin{equation*}
    \int_{\R^d}|f(y)|^2\left|\log \(|f(y)|^2\)\right|\dd y \le C(M).
  \end{equation*}
\end{lemma}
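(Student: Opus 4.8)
The plan is to split $\R^d$ according to whether $|f|^2$ is large or small, and to control the two resulting pieces by unrelated mechanisms. The $H^1$-part of the hypothesis gives $\|f\|_{H^1}^2\le M$, while the $\F(H^1)$-part is, by Plancherel, exactly the weight bound $\int_{\R^d}|y|^2|f|^2\le M$. I would then write
\[
\int_{\R^d}|f|^2\left|\log|f|^2\right|\dd y=\int_{\{|f|^2\ge1\}}|f|^2\log|f|^2\dd y+\int_{\{|f|^2<1\}}|f|^2\log\tfrac{1}{|f|^2}\dd y,
\]
and bound the large-value integral by Sobolev embedding and the small-value integral by the weight.

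For the large-value region I would use that $\log t\le C_\delta\,t^{\delta}$ for $t\ge1$, so that $|f|^2\log|f|^2\le C_\delta\,|f|^{2(1+\delta)}$ on $\{|f|^2\ge1\}$. Choosing $\delta>0$ small enough that $2(1+\delta)$ does not exceed the critical Sobolev exponent — which is possible in every dimension $d\ge1$, since $H^1(\R^d)$ embeds into some $L^p$ with $p>2$ — the embedding yields
\[
\int_{\{|f|^2\ge1\}}|f|^2\log|f|^2\dd y\le C_\delta\|f\|_{L^{2(1+\delta)}}^{2(1+\delta)}\lesssim\|f\|_{H^1}^{2(1+\delta)}\le M^{1+\delta}.
\]

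For the small-value region the naive pointwise bound $|f|^2\log(1/|f|^2)\le1/e$ is useless, being non-integrable over the unbounded domain; the decay must come from the weight. Here I would compare $|f|^2$ with the Gaussian $\Gamma(y)=e^{-|y|^2}$, for which $\log(1/\Gamma)=|y|^2$, and use the exact identity
\[
|f|^2\log\tfrac{1}{|f|^2}=|f|^2\log\tfrac{\Gamma}{|f|^2}+|y|^2|f|^2.
\]
The elementary inequality $\log x\le x-1$, applied with $x=\Gamma/|f|^2$ and multiplied by $|f|^2\ge0$, gives $|f|^2\log(\Gamma/|f|^2)\le\Gamma-|f|^2\le\Gamma$, whence $|f|^2\log(1/|f|^2)\le\Gamma+|y|^2|f|^2$. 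Integrating over $\{|f|^2<1\}$, where the left-hand side is nonnegative, produces
\[
\int_{\{|f|^2<1\}}|f|^2\log\tfrac{1}{|f|^2}\dd y\le\|\Gamma\|_{L^1(\R^d)}+\int_{\R^d}|y|^2|f|^2\dd y\le\|\Gamma\|_{L^1(\R^d)}+M,
\]
and adding the two contributions gives the desired constant $C(M)$.

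The main obstacle is exactly this small-value, far-field region: the integrand vanishes pointwise as $|f|^2\to0$, but over $\R^d$ this is insufficient, and it is precisely the weight $|y|f\in L^2$ (equivalently the $\F(H^1)$ hypothesis) that has to be converted into integrability. The Gaussian comparison is the clean device for this conversion, since $|y|^2=-\log\Gamma$ matches the reference profile of the rescaled system \eqref{RU}; the large-value region, by contrast, is routine Sobolev theory.
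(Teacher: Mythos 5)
Your proof is correct, and while it shares the paper's basic structure -- the split at $|f|=1$ and the treatment of the large-value region are essentially identical (the paper bounds $|f|^2\log|f|^2\lesssim |f|^{2+\beta}$ on $\{|f|>1\}$ with $\beta=\tfrac{2}{d+2}$ and invokes Sobolev embedding, which is exactly your $\log t\le C_\delta t^\delta$ device) -- your handling of the small-value region is genuinely different. There the paper writes $|f|^2\bigl|\log(|f|^2)\bigr|\lesssim |f|^{2-\beta}$ and controls $\int|f|^{2-\beta}$ by the localization/interpolation estimate
\begin{equation*}
\int_{\R^d}|f|^{2-\beta}\le C_\beta\,\|f\|_{L^2}^{2-\beta-d\beta/2}\,\bigl\||y|f\bigr\|_{L^2}^{d\beta/2},\qquad 0<\beta<\tfrac{4}{d+2},
\end{equation*}
proven by splitting $\{|y|<\kappa\}$ and $\{|y|>\kappa\}$, applying H\"older, and optimizing in $\kappa$. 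You instead use the pointwise Gaussian comparison $|f|^2\log(1/|f|^2)=|f|^2\log(\Gamma/|f|^2)+|y|^2|f|^2\le\Gamma+|y|^2|f|^2$, via $\log x\le x-1$; this is the nonnegativity-of-relative-entropy mechanism that the paper itself deploys elsewhere (via Csisz\'ar--Kullback, to show $\int(R|y|^2+R\log R)\ge 0$ in the introduction), so your argument is very much in the spirit of the rest of the paper even though it differs from the proof of this lemma. Your route is more elementary -- no H\"older, no optimization -- gives the explicit additive constant $\|\Gamma\|_{L^1}+M$ for that piece, and makes transparent that the gradient bound plays no role on the small set, only the moment; the paper's interpolation estimate is slightly more informative in exchange, since it yields an $L^{2-\beta}$ bound for a whole range of $\beta$, which is a reusable quantitative statement rather than a bound tailored to the logarithm. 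Both arguments are complete and deliver a constant depending only on $M$ (the minor caveats in yours -- interpreting the integrand as $0$ where $f=0$, and checking $2(1+\delta)$ stays subcritical in every dimension $d\ge1$ -- are handled correctly).
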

\begin{proof}[Sketch of proof]
  We distinguish the regions where $|f|$ is smaller or larger than
  one,
  \begin{align*}
    \int_{\R^d}|f(y)|^2\left|\log \(|f(y)|^2\)\right|\dd y & \le
    \int_{|f|<1}|f(y)|^2\left|\log \(|f(y)|^2\)\right|\dd y  +
     \int_{|f|>1}|f(y)|^2\left|\log \(|f(y)|^2\)\right|\dd y \\
    &\lesssim
    \int_{\R^d} |f(y)|^{2-\beta}\dd y + \int_{\R^d} |f(y)|^{2+\beta}\dd y ,
  \end{align*}
  where $\beta>0$ is arbitrarily small. We then invoke the  localization estimate in the former
  region,
  \begin{equation*}
  \int_{\R^d}|f|^{2-\beta} \le C_\beta
  \|f\|_{L^2}^{2-\beta-d\beta/2}\||y|f\|_{L^2}^{d\beta/2},\quad
0<\beta<\frac{4}{d+2},
\end{equation*}
which is easily established by distinguishing the regions $|y|
<\kappa$ and $|y|>\kappa$, introducing $|y|^2/|y|^2$ in the latter,
using H\"older  
inequality, and eventually 
optimizing in $\kappa$. We may take $\beta=\frac{2}{d+2}$, and the term $\int
|f|^{2+\beta}$ is then controlled by the $H^1$-norm of $f$ thanks to
Sobolev embedding.
\end{proof}
Of course if $H^1\cap \F(H^1)$ is replaced by $H^1$, the above lemma
is no longer true. In view of the above discussion, we will apply this
lemma to $\sqrt R$. Recalling that the presence of a space momentum is
natural when working with the unknown $(R,U)$ (due to \eqref{RU2},
implying the definition \eqref{eq:ERU}), this yields another
motivation for working with $(R,U)$ instead of $(\r,u)$: we definitely
gain coercivity properties. 
\smallbreak

With the above definition, the main result of this paper reads:
\begin{theorem}\label{theo:main}
   Assume $\nu>0$, $\eps\ge 0$.
   Let $(\sqrt{R_0} , \Lambda_0 = (\sqrt{R} U)_0 ) \in L^2(\R^d) \times L^2(\R^d)$ satisfy $\EE(0) < \infty$, $\EBD(0) < \infty$,
as well as the compatibility conditions
\[
\sqrt{R_0} \ge 0 \text{ a.e.\ on } \R^d,  \quad   (\sqrt{R}U)_0=
0 \text{ a.e.\ on } \{\sqrt{R_0} = 0 \}.
\]
There exists  at least one global weak solution to \eqref{RU}, which satisfies moreover the energy and BD-entropy inequalities \eqref{eq:EE+DD<=EE0} and \eqref{eq_EBD_weak}. 
\end{theorem}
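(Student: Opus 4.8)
The plan is to construct the solution through a nested sequence of approximations, peeling off one difficulty at a time, exactly along the lines announced in the introduction. At the innermost level I would fix a torus $\T_L^d=(\R/L\Z)^d$ and regularize \eqref{RU} by (i) replacing the confining field $y\mapsto 2y$ and the weight $|y|^2$ by smooth periodic truncations that coincide with the genuine objects on a ball of radius $\sim L$ and recover them as $L\to\infty$, (ii) adding artificial density diffusion and a cold-pressure/higher-order regularization so that the density is smooth and bounded away from vacuum, and (iii) adding linear and nonlinear drag terms $-r_0U-r_1R|U|^2U$ to the momentum equation. On such a regularized, non-degenerate problem the existence of smooth solutions follows from a standard Faedo--Galerkin scheme together with a fixed-point argument, and one first lets the top regularization parameters tend to zero at fixed $(L,r_0,r_1)$ to produce finite-energy, finite-entropy weak solutions with drag on the torus.

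The engine driving every limit is the pair of a priori bounds coming from the energy and the BD-entropy. From \eqref{eq:EE+EBD}, the conservation of mass, and $\int_0^\infty\tau^{-2}<\infty$, I obtain uniform control of $\EE$ and $\EBD$, hence of $\(\<y\>+|U|\)\sqrt{R}$ and $\nabla\sqrt{R}$ in $L^\infty_{\rm loc}L^2$, of $\eps\nabla^2\sqrt R$ in $L^2L^2$ and $\sqrt\eps\,\nabla R^{1/4}$ in $L^4L^4$ through \eqref{eq:equivJungel}, together with the viscous dissipation; the drag terms additionally furnish an $L^2$ bound on $\sqrt{r_0}\,U$ and an $L^4$-type bound on $r_1^{1/4}R^{1/4}U$. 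Lemma~\ref{lem:SigmaLlogL} applied to $\sqrt R$ shows that the pressure contribution $\int R\log R$ is automatically controlled, so no independent entropy estimate is needed. Strong compactness of $\sqrt R$ then comes from these spatial bounds and the continuity equation (yielding a time-derivative bound) via an Aubin--Lions argument, while $\sqrt{R}U$ converges weakly; the convective and Korteweg nonlinearities are identified using the compatibility relations \eqref{eq_compnewton}--\eqref{eq_compkorteweg} and the representation \eqref{eq_Ktwidentity}.

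The delicate step, and the one I expect to be the main obstacle, is the \emph{vanishing-drag} limit $r_0,r_1\to0$ on the fixed torus, where the absence of any strong compactness of $U$ on the vacuum set prevents a direct passage to the limit in the viscous stress $\tfrac{\nu}{\tau^2}\sqrt{R}\,\mathbf S_N$. Here I would import the renormalized-solution framework of Lacroix-Violet and Vasseur: rather than seeking compactness of $U$ itself, one propagates a renormalized formulation tested against functions $\varphi(U)$ and works throughout with the tensor $\mathbf T_N$ subject to $\sqrt{R}\mathbf T_N=\nabla(\sqrt R\,\sqrt R U)-2\sqrt R U\otimes\nabla\sqrt R$, which is exactly the object appearing in Definition~\ref{def:weak}. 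The drag provides just enough integrability of $U$ to define $\mathbf T_N$ at the approximate level, and a Mellet--Vasseur-type estimate yields equi-integrability of $R|U|^2$, hence strong $L^2$ convergence of the momentum $\sqrt R U$ and thereby control of $\sqrt R U\otimes\sqrt R U$ in the limit. Verifying that $\mathbf T_N$ (and $\mathbf S_N$, $\mathbf S_K$) survive this limit while the compatibility conditions persist is the technical heart of the argument.

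Finally I would let $L\to\infty$. The uniform-in-$L$ energy and BD-entropy bounds are preserved, and the crucial new point is tightness: the confining term $\int R|y|^2$ in \eqref{eq:ERU}, together with the Csisz\'ar--Kullback lower bound for $\int R\log(R/\Gamma)$, prevents mass from escaping to spatial infinity and localizes the solutions uniformly in $L$, so that the truncated potentials converge to their genuine counterparts and no mass or energy is lost. Passing to the limit in each term as before, using the compactness of $\sqrt R$ and the weak convergence of $\sqrt R U$ supplemented by the momentum compactness, yields a collection $(\sqrt R,\sqrt R U,\mathbf S_K,\mathbf T_N)$ on $(0,\infty)\times\R^d$ satisfying i)--iii) of Definition~\ref{def:weak}. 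The inequalities \eqref{eq:EE+DD<=EE0}--\eqref{eq_EBD_weak} then follow by weak lower semicontinuity, the extra right-hand side terms being absorbed by $\mathcal D$ via Cauchy--Schwarz and mass conservation, as indicated after \eqref{eq_EBD_weak}.
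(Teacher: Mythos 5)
Your proposal follows essentially the paper's own route: Faedo--Galerkin for a regularized system with drag on a torus (Sections~\ref{sec:reg}--\ref{sec:drag}), energy plus BD-entropy bounds as the engine, the Lacroix-Violet--Vasseur renormalized framework of Definition~\ref{def:renorm} to remove the drag, and a final whole-space limit. The one structural difference is the ordering of the last limits: you send $r_0,r_1\to0$ (and the vacuum parameter $\theta\to0$) at fixed torus size and only then let $\ell\to\infty$, whereas the paper takes these limits simultaneously, choosing $r_{0,\ell},r_{1,\ell},\eps_\ell$ as explicit functions of $\ell$. The paper itself states that your sequential ordering is viable and is avoided only to shorten the presentation; moreover your order respects the genuine constraint the paper emphasizes, namely that the drag must vanish \emph{before} $\ell\to\infty$, since the control of $r_0(\log R)_-$ in $L^1$ demanded by the BD-entropy with drag is incompatible with $\sqrt R\in H^1$ on $\R^d$. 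Your periodic truncation of the confining terms $2yR$ and $R|y|^2$ is a harmless deviation (the paper keeps them as is and instead builds the truncated, mollified, $\theta$-lifted initial data carefully, tuning $r_{0,\ell}$ so that $r_{0,\ell}\int\log(R_{0,\ell})\mathbf 1_{R_{0,\ell}<1}\to0$).

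Two wrinkles deserve flagging. First, the ``Mellet--Vasseur-type estimate'' you invoke to get equi-integrability of $R|U|^2$ and strong $L^2$ convergence of $\sqrt R\,U$ is precisely what is \emph{not} available here: its derivation tests the momentum equation against multipliers of the form $U\varphi(|U|^2)$, which the third-order Korteweg term does not tolerate, and this failure is the very motivation for the renormalized framework you also cite. Fortunately it is unnecessary: in \eqref{eq_renorm_momentum} the convective term is $\sqrt R\,\varphi(U)\otimes\sqrt R\,U$ with $\varphi'$ bounded, and it passes to the limit from strong convergence of $\sqrt R$ and of $R_\ell U_\ell$ (hence a.e.\ convergence of $U$ on $\{R>0\}$, which the paper obtains in $L^2_tL^p_{\rm loc}$, $p<3/2$, by Aubin--Lions arguments on the momentum) together with weak convergence of $\sqrt R\,U$; the de-renormalization back to Definition~\ref{def:weak} then uses only that the defect measures $f_\varphi,g_\varphi$ have total mass bounded by $C\|\nabla^2\varphi\|_{L^\infty}$, which vanishes along a sequence $\varphi_n\to\mathrm{id}$. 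Second, your scheme as written does not cover $\eps=0$, which the theorem allows: the approximate constructions (Proposition~\ref{prop:sol-approche} and Theorem~\ref{theo:RU-drag}) require $\eps>0$, e.g.\ for the $\nabla^2\sqrt R$ bounds used in the compactness arguments; the paper handles this by taking $\eps_\ell=\eps+1/\ell$, and in your sequential ordering you would likewise need an artificial Korteweg parameter sent to zero in the final limit.
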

In view of \cite{CCH}, we readily infer the following corollary:
\begin{corollary}\label{cor:tempslong}
  Under the assumptions  of Theorem~\ref{theo:main}, every global weak
  solution to \eqref{RU}  enjoying the energy  inequality
  \eqref{eq:EE+DD<=EE0}   satisfies
  \begin{equation*}
    R(t)\rightharpoonup \Gamma \quad\text{in }L^1(\R^d), \text{ as }t\to\infty.
  \end{equation*}
\end{corollary}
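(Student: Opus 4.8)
The plan is to obtain the convergence as a direct application of the large-time analysis carried out in the previous paper \cite{CCH}, treating the existence result of Theorem~\ref{theo:main} as the missing input that was assumed there. The key observation is that the energy inequality \eqref{eq:EE+DD<=EE0} furnishes exactly the a priori bounds needed to run the relative entropy argument. Indeed, recalling that $\|R(t)\|_{L^1}=\|\Gamma\|_{L^1}=\pi^d$ by conservation of mass, and writing
\[
\int_{\R^d}\(R|y|^2+R\log R\)=\int_{\R^d}R\log\frac{R}{\Gamma},
\]
the estimate \eqref{eq:EE+DD<=EE0} bounds the relative entropy of $R(t)$ with respect to the Gaussian $\Gamma$ uniformly in $t$, since the kinetic and Korteweg contributions to $\EE$ are nonnegative. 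Thus $\EE(t)\le C(\EE(0))$ controls $\int_{\R^d}R(t)\log(R(t)/\Gamma)$ for almost every $t$.

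First I would invoke the Csisz\'ar--Kullback inequality, already quoted in the excerpt, in the form
\[
\frac{1}{2\pi^d}\|R(t)-\Gamma\|_{L^1}^2\le\int_{\R^d}R(t)\log\frac{R(t)}{\Gamma},
\]
so that proving $R(t)\rightharpoonup\Gamma$ in $L^1$ reduces to showing that the relative entropy tends to $0$ as $t\to\infty$. The mechanism for this decay is the dissipation term in \eqref{eq:EE+DD=EE0}: the time integral $\int_0^\infty\DD(s)\,\dd s$ is finite, and $\DD$ contains the factor $\dot\tau/\tau^3$ times the weighted quantities that, combined with the logarithmic Sobolev inequality for the Gaussian $\Gamma$, control the relative entropy itself. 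This is precisely the argument developed in \cite{CCH}, where the convergence $R(t)\rightharpoonup\Gamma$ is established for any finite-energy solution satisfying the energy inequality. Since Theorem~\ref{theo:main} now guarantees that such solutions exist and satisfy \eqref{eq:EE+DD<=EE0}, the hypotheses of the relevant convergence theorem in \cite{CCH} are met.

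The structure of the proof is therefore: cite the existence of a global weak solution satisfying \eqref{eq:EE+DD<=EE0} from Theorem~\ref{theo:main}; verify that this energy inequality is exactly the standing assumption under which \cite{CCH} proves large-time convergence to $\Gamma$; and conclude by quoting the corresponding result. The only point requiring care is to check that our notion of weak solution, phrased in terms of $\sqrt R$ and $\sqrt R\,U$ together with the defect tensors $\mathbf S_K,\mathbf T_N$, supplies at least as much regularity and as strong an energy inequality as the framework of \cite{CCH} demands; but this is immediate since \eqref{eq:EE+DD<=EE0} is stated with the same functionals $\EE,\DD$ as in \eqref{eq:ERU}--\eqref{eq:DRU}.

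I expect the main (indeed only) obstacle to be purely bibliographic rather than mathematical: one must locate the precise statement in \cite{CCH} asserting $L^1$-convergence of $R(t)$ to $\Gamma$ under the hypothesis \eqref{eq:EE+DD<=EE0}, and confirm that no additional integrability or higher-order bound was silently used there beyond what Theorem~\ref{theo:main} provides. Once that matching is done, the corollary follows with no further computation.
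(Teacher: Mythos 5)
Your bottom line coincides with the paper's: the paper's entire proof of this corollary is the one\-/line citation ``In view of \cite{CCH}, we readily infer\dots'', i.e.\ Theorem~\ref{theo:main} produces a global weak solution satisfying \eqref{eq:EE+DD<=EE0}, and this inequality is exactly the standing hypothesis under which \cite{CCH} proves $R(t)\rightharpoonup\Gamma$ in $L^1(\R^d)$. So your skeleton --- existence from Theorem~\ref{theo:main}, hypothesis matching, quote the convergence result --- is precisely the intended argument, and your closing caveat about checking that the weak-solution framework here supplies what \cite{CCH} assumes is the right thing to verify.

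However, the mechanism you interpolate for what happens inside \cite{CCH} is wrong, and you should strike it. You assert that the proof ``reduces to showing that the relative entropy tends to $0$'', driven by the dissipation together with the logarithmic Sobolev inequality. Three concrete objections. First, relative-entropy decay plus Csisz\'ar--Kullback would yield \emph{strong} convergence $\|R(t)-\Gamma\|_{L^1}\to 0$, strictly more than the weak convergence the corollary claims; and the paper explicitly remarks that in \cite{CCH} even the convergence of the second momentum of $R$ requires information \emph{beyond} \eqref{eq:EE+DD<=EE0} (namely that $E(t)=o(\log t)$), so entropy decay is certainly not a consequence of the energy inequality alone. Second, the weight $\dot\tau/\tau^3$ in $\DD$ is integrable: since $\tau(0)=1$ and $\tau(t)\to\infty$, one has $\int_0^\infty \dot\tau\,\tau^{-3}\,\dd t = 1/2$, so finiteness of $\int_0^\infty \DD(s)\,\dd s$ imposes no vanishing whatsoever on $\tau^{-2}\int R|U|^2$ or on the gradient term --- there is no ``decay from dissipation'' to extract this way. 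Third, Theorem~\ref{theo:main} allows $\eps=0$, in which case $\DD$ as defined in \eqref{eq:DRU} contains no $|\nabla\sqrt R|^2$ term at all, and the corollary does not assume the BD inequality \eqref{eq_EBD_weak}; hence no Fisher-information-type quantity is available for a log-Sobolev argument to act on. What \eqref{eq:EE+DD<=EE0} actually buys --- and what the citation rests on --- is a uniform-in-time bound on $\int R|y|^2 + R\log R$, i.e.\ \emph{boundedness} (not decay) of the relative entropy, which gives tightness and uniform integrability of $\{R(t)\}$, after which \cite{CCH} identifies the weak limit as $\Gamma$ by arguments on the equations and moments rather than by entropy dissipation. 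Since your proof ultimately delegates to the citation, the corollary still follows; but as written, the middle paragraph inserts a false claim and misattributes it to \cite{CCH}.
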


To construct solutions of \eqref{RU}, we consider various
levels of approximation, by resuming the approach of
\cite{VasseurYuInventiones} (summarized in \cite{RoussetBBK}) in the
case $\gamma>1$. The first approximation
consists in adding two new terms in the left hand side of \eqref{RU2},
leading to more 
dissipation, hence better a priori estimates,
\begin{equation*}
  \frac{r_0}{\tau^2}U +\frac{r_1}{\tau^2}R|U|^2U.
\end{equation*}
This yields the following system in $\R^d$, for $r_0,r_1 \ge 0$:
\begin{subequations}\label{RU-drag}
\begin{align}[left = \empheqlbrace\,]
& \partial_t R+\frac{1}{\tau^2}\Div (R U )=0    \label{RU-drag1}  \\
& \partial_t (R U) +\frac{1}{\tau^2}\Div ( R U \otimes U) +2 y R  + \nabla R  
+ \frac{r_0}{\tau^2} U + \frac{r_1}{\tau^2} R |U|^2 U  \label{RU-drag2} \\
&\qquad\qquad\qquad\qquad\qquad
=\frac{\eps^2}{2\tau^2}R\nabla \( \frac{\Delta  \sqrt{R}}{\sqrt{R}}\)   
+\frac{\nu}{\tau^2} \Div (R \D U) + \frac{\nu \dot \tau}{\tau} \nabla R.\nonumber 
\end{align}
\end{subequations}
When $r_0 , r_1 >0$ we call this system the \emph{isothermal fluid system with drag forces}, whereas when $r_0=r_1=0$ we recover the original system \eqref{RU}. 
When the factor $1/\tau^2$ is absent, these terms
correspond to physical models; see
e.g. \cite{BD03,BD07} and references therein. 

\smallbreak

The change of unknown functions \eqref{eq:uvFluid} involves a
time-dependent spatial rescaling, an aspect which essentially forces
us to consider the geometrical framework $x\in \R^d$. On the other
hand, construction of weak solutions in the context of compressible
fluid mechanics is often performed in the periodic case $x\in \T^d$:
this geometry provides compactness in space more easily, and
integrations by parts are harmless. The periodic case is also rather
convenient for approximating, among others in Lebesgue spaces, the
initial density by a density bounded away from zero (see
\eqref{RUinitial-theta2} below), a step which
would be more delicate on $\R^d$. Note also that this property is
classically propagated  by the flow in a suitable regularized continuity equation
(see e.g. \cite{Fei04,Jungel}), and such a property is needed in the
presence of cold pressure 
and regularizing terms (see e.g. \cite{Gis-VV15,VasseurYu}). For these reasons, the second step in
our approach consists in replacing  $\R^d$ with a box $\T^d_\ell$ of
size $\ell>0$, where $\ell$ is aimed at going to infinity at the last
step of the construction of solutions to the system with drag forces \eqref{RU-drag} with $r_0, r_1 >0$. The most delicate step turns out to be the
adaptation of the initial data, given on $\R^d$, in order to fit in
the periodic framework. Details are given in Section~\ref{sec:tore}.

\smallbreak

We also emphasize another important difference whether the space
variable belongs to $\T^d$ or to $\R^d$. In the former case, it is
possible to overcome the lack of positivity in the energy \eqref{Eru}
by introducing an intermediary constant density, as in
e.g. \cite{BGL19,BrNoVi17,Jungel}. This strategy cannot be carried out in the
case $x\in \R^d$, since no non-zero constant belongs to $L^1(\R^d)$.

\smallbreak

To solve  \eqref{RU-drag} on the torus $\T^d_\ell$, we proceed as in \cite{VasseurYu}
and introduce regularizing terms in \eqref{RU-drag1} and \eqref{RU-drag2}.
 This regularized system hence becomes
\begin{subequations}\label{RU-drag-reg}
\begin{align}[left = \empheqlbrace\,]
& \partial_t R+\frac{1}{\tau^2}\Div (R U )= \frac{\delta_1}{\tau^2} \Delta R,    \label{RU-drag-reg1}  \\
& \partial_t (R U) +\frac{1}{\tau^2}\Div ( R U \otimes U) +2 y R  + \nabla R - \eta_1 \nabla R^{-\alpha}\label{RU-drag-reg2} \\ 
&\quad\qquad 
+ \frac{r_0}{\tau^2} U + \frac{r_1}{\tau^2} R |U|^2 U + \frac{\delta_1}{\tau^2}(\nabla R \cdot \nabla) U  \nonumber \\\notag
&\quad\qquad
=\frac{\eps^2}{2\tau^2}R\nabla \( \frac{\Delta  \sqrt{R}}{\sqrt{R}}\)   
+\frac{\nu}{\tau^2} \Div (R \D U) + \frac{\nu \dot \tau}{\tau} \nabla R
+\frac{\delta_2}{\tau^2} \Delta^2 U  + \frac{\eta_2}{\tau^2} R \nabla
   \Delta^{2s+1} R  ,
\end{align}
\end{subequations}
where the regularization parameters verify $0< \delta_1, \delta_2, \eta_1, \eta_2 <1$; $\alpha,s >0$ are 
chosen sufficiently large (to be fixed later on); and the drag forces parameters $r_0,r_1$ as well as the Korteweg parameter $\eps$ are positive $r_0 , r_1 , \eps >0$. 
Such solutions are constructed in Section~\ref{sec:regularised}. Next,
passing to the limit $\delta_1,\delta_2\to 0$, then $\eta_1,\eta_2\to
0$,
we obtain a solution to the system with drag forces \eqref{RU-drag} with $r_0, r_1 , \eps >0$ on the torus $\T^d_\ell$. 
This is achieved in Section~\ref{sec:drag}. 

\smallbreak 

To pass to the limits  $\theta\to 0$, where $\theta>0$ measures the
fact that the initial density is bounded away from zero (see
\eqref{RUinitial-theta2}), $r_0,r_1\to 0$ and $\ell\to \infty$
(simultaneously), we proceed as in \cite{LacroixVasseur}, and 
consider an adapted notion of renormalized solutions, which is
equivalent to our notion of weak solution in the presence of drag
forces terms, and provides a weak solution when
$r_0=r_1=0$. We thus obtain a solution to
\eqref{RU} on the whole space. Note that this step
has to be the final one, insofar as the case with drag forces requires
to control $r_0\(\log R\)_-$ in $L^1$ (see e.g. \cite{VasseurYu}),
which is inconsistent with the property $\sqrt R\in H^1$ in the
case $y\in \R^d$.
These steps are performed in Section~\ref{sec:tore}. 
\smallbreak

We note that these final limits, $\theta\to 0$,  $r_0,r_1\to 0$, and
$\ell\to \infty$ could be performed in a more independent fashion, by
letting first $\theta,r_0,r_1\to 0$, thus obtaining a global weak solution to
\eqref{RU} on $\T_\ell^d$, and then letting $\ell\to \infty$
(recalling that $H^1\cap \F(H^1)$ provides more compactness than the mere $H^1$
space). We choose to unify these steps in order to shorten the overall
presentation, and also since \eqref{RU} is meaningful on $\R^d$ in
view of \eqref{eq:uvFluid}, but not necessarily on a (time-independent) torus. 

\smallbreak

We explain now the outcome of our main theorem in terms 
of the initial system \eqref{fluide}. This is the content of the following corollary:

\begin{corollary}
   Assume $\nu>0$ and $\eps\ge 0$. 
Let $(\sqrt{\r_0} , \lambda_0 = (\sqrt{\r} u)_0 ) \in
H^1  \cap \F(H^1) (\R^d) \times L^2(\R^d)$ satisfy the compatibility conditions
\[
\sqrt{\r_0} \ge 0 \text{ a.e.\ on } \R^d,  \quad   (\sqrt{\r}u)_0=
0 \text{ a.e. on } \{\sqrt{\r_0} = 0 \},
\]
and assume that  the associated functions $(\sqrt{R_0} , \Lambda_0 =
(\sqrt{R} U)_0 )$ obtained via  \eqref{eq:uvFluid} satisfy $\EE(0) <
\infty$ and $\EBD(0) < \infty$. 
Then there exists a global weak solution to \eqref{fluide} in
the following sense: there exists a collection $(\sqrt\r,\sqrt\r u , \mathbb T_N, \mathbb S_K)$ such that

\smallskip

\item[i)] The  following regularities are satisfied:
\begin{align*}
&\(\<x\>+|u|\) \sqrt{\r} \in L^{\infty}_{\rm loc}\(0,\infty; L^2 (\R^d)\),\quad \nabla
  \sqrt \r \in L^{\infty}_{\rm loc}\(0,\infty; L^2 (\R^d)\),\\ 
& \eps \nabla^2 \sqrt{\r} \in L^2_{\rm loc}(0,\infty;L^2(\mathbb
  R^d)) ,\quad 
  \sqrt{\eps} \nabla \r^{1/4}\in L^4_{\rm loc}(0,\infty;L^4(\mathbb
  R^d)),\\
&\mathbb T_N \in L^2_{\rm loc}(0,\infty; L^2(\mathbb R^d)) ,
\end{align*}
with the compatibility conditions
\[
{\sqrt{\r}} \ge 0 \text{ a.e. on } (0,\infty)\times \R^d,  \quad   \sqrt{\r}u=
0 \text{ a.e. on } \{\sqrt{\r} = 0 \}.
\]
\item[ii)] The following equations hold in $\mathcal D'((0,\infty)\times \mathbb R^d)$
\begin{equation}\label{eq:NSKrevu_petit}
  \left\{
    \begin{aligned}
  & \partial_t\sqrt{\r}+ \Div (\sqrt{\r} u )=
  \frac{1}{2}{\rm Trace}(\mathbb T_N),\\
    &\partial_t ({\r}u) + \Div ( \sqrt{\r}u \otimes \sqrt{\r}u)
     +\nabla \left( |\sqrt{\r}|^2 \right)
      =\Div \left(\dfrac{\nu}{\tau^2} \sqrt \r \mathbb S_N
        + \dfrac{\eps^2}{2} \mathbb S_K\right),
    \end{aligned}
\right.
\end{equation}
with $\mathbb S_N$ the symmetric part of $\mathbb T_N$ and the compatibility conditions:
\begin{align} \label{eq_compnewton_petit}
& \sqrt{\r}\mathbb T_{N} = \nabla(\sqrt{\r}  \sqrt{\rho} u) - 2 \sqrt{\r}u
  \otimes \nabla \sqrt{\r}\,, \\[6pt] 
& \mathbb S_K 
=\sqrt{\r}\nabla^2 \sqrt{\r} -  \nabla \sqrt{\r} \otimes \nabla \sqrt{\r}
  \,. \label{eq_compkorteweg_petit} 
\end{align}
\item[iii)] For any $\psi\in C_0^\infty(\R^d)$, 
  \begin{align*}
    &\lim_{t\to 0}\int_{\R^d} \sqrt{\r}(t,x)\psi(x) \, \dd x=
    \int_{\R^d} \sqrt{\r_0} (x) \psi(x)  \, \dd x , \\
& \lim_{t\to 0}\int_{\R^d} \sqrt{\r}(t,x)    (\sqrt{\r}u)(t,x)\psi(x) \, \dd x=
    \int_{\R^d} \sqrt{\r_0} (x) \lambda_0(x) \psi(x) \, \dd x.
  \end{align*}
\end{corollary}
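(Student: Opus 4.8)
The plan is to obtain the solution directly from Theorem~\ref{theo:main} by inverting the time-dependent rescaling \eqref{eq:uvFluid}. First I would associate to the given data $(\sqrt{\r_0},\lambda_0)$ the rescaled data $(\sqrt{R_0},\Lambda_0)$ through \eqref{eq:uvFluid} taken at $t=0$, where $\tau(0)=1$ and $\dot\tau(0)=0$ so that the transform reduces to the multiplicative normalisation by $c:=\|\r_0\|_{L^1}/\|\Gamma\|_{L^1}$; the hypotheses $\EE(0)<\infty$, $\EBD(0)<\infty$ together with the compatibility conditions are exactly those required to apply Theorem~\ref{theo:main}, which furnishes a global weak solution $(R,U)$ to \eqref{RU} in the sense of Definition~\ref{def:weak}, with an associated collection $(\sqrt R,\sqrt R U,\mathbf S_K,\mathbf T_N)$. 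I would then define $(\sqrt\r,\sqrt\r u)$ by \eqref{eq:uvFluid} and set $\mathbb T_N$, $\mathbb S_K$ (hence $\mathbb S_N$) to be the images of $\mathbf T_N$, $\mathbf S_K$ under the change of variables $y=x/\tau(t)$, with the powers of $\tau$ dictated by the scaling $\nabla_x=\tau^{-1}\nabla_y$ and $\sqrt\r(t,x)=\sqrt c\,\tau(t)^{-d/2}\sqrt R(t,x/\tau(t))$.

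The verification of item~i) is a direct change of variables: since $\tau$ is smooth and bounded away from $0$ on every compact time interval, the rescaling is, at each fixed $t$, a dilation of the space variable, and it maps the function spaces of Definition~\ref{def:weak}~i) onto those claimed in i), the weight $\<y\>$ turning into $\<x\>$ up to $\tau$-factors and the velocity and space momenta being preserved locally in time (as already noted after \eqref{eq:uvFluid}). Likewise, substituting the scaling relations into \eqref{eq_compnewton} and \eqref{eq_compkorteweg} and using $\nabla_x=\tau^{-1}\nabla_y$ yields the compatibility conditions \eqref{eq_compnewton_petit} and \eqref{eq_compkorteweg_petit}; in particular $\mathbb S_K=\sqrt\r\,\nabla_x^2\sqrt\r-\nabla_x\sqrt\r\otimes\nabla_x\sqrt\r$ is the image of $\mathbf S_K$, and the Korteweg identity \eqref{eq_Ktwidentity} guarantees that $\tfrac{\eps^2}{2}\Div\mathbb S_K$ represents the quantum term of \eqref{fluide2}.

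The core of the argument is item~ii), the transfer of the equations to $\mathcal D'((0,\infty)\times\R^d)$. The essential point is that the change of variables is time-dependent, so a test function $\phi(t,x)$ corresponds to $\Phi(t,y)=\phi(t,\tau(t)y)$, and the chain rule produces the coupling $\partial_t\Phi=(\partial_t\phi)+\dot\tau\,y\cdot(\nabla_x\phi)$. I would insert this identity into the weak form \eqref{eq:NSKrevu} and collect terms: the drift $\dot\tau\,y\cdot\nabla$ generated by differentiating in time is precisely what cancels the rescaling contributions carried by $u=\tau^{-1}U+\tfrac{\dot\tau}{\tau}x$, so that the zeroth-order terms $2y|\sqrt R|^2$ and the extra viscous term $\tfrac{\nu\dot\tau}{\tau}\nabla R$ present in \eqref{eq:NSKrevu} disappear and one is left with exactly the clean system \eqref{eq:NSKrevu_petit}; this is the same formal computation, run backwards, that turned \eqref{fluide} into \eqref{RU} in the introduction, now justified at the level of distributions. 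I would carry out the continuity equation first (the cancellation there is short: the $-d\tau^{-1}\dot\tau R$ and the $\pm\tfrac{\dot\tau}{\tau}y\cdot\nabla_y R$ contributions cancel, leaving $\partial_t R+\tau^{-2}\Div(RU)=0$), and then the momentum equation, where the same bookkeeping recombines the convection, pressure, viscous and Korteweg contributions.

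Finally, for item~iii) I would use that as $t\to0$ one has $\tau(t)\to1$ and $\dot\tau(t)\to0$, so the rescaled test functions $\Phi(t,\cdot)$ converge to $\psi$ and the weak-in-time continuity of $\sqrt R$ and $RU$ (recalled in the discussion following Definition~\ref{def:weak}) transfers to $\sqrt\r$ and $\r u$, accounting only for the fixed normalisation $c$. The step I expect to be the main obstacle is item~ii): tracking the powers of $\tau$ and the time/space derivative coupling so that every extra term generated by the time-dependent dilation cancels exactly, while simultaneously checking that the limited regularity inherited from Theorem~\ref{theo:main} makes each transformed product (notably $\sqrt\r u\otimes\sqrt\r u$ and $\sqrt\r\,\mathbb S_N$) a well-defined distribution.
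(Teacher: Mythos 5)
Your plan is exactly the paper's (implicit) argument: the corollary is stated without a separate proof precisely because it amounts to applying Theorem~\ref{theo:main} and undoing the time-dependent rescaling \eqref{eq:uvFluid}, with the regularity transfer in i) and the initial-time statements in iii) handled just as you describe (using $\tau(0)=1$, $\dot\tau(0)=0$, and the boundedness of $\tau$ from below on compact time intervals).

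One definitional point must be corrected, however. You cannot take $\mathbb T_N$ to be the bare rescaled image of $\mathbf T_N$: since $u=\tau^{-1}U(\cdot,\cdot/\tau)+(\dot\tau/\tau)x$, the formal relation $\mathbb T_N=\sqrt\r\,\nabla u$ forces
\[
\mathbb T_N(t,x)=\Bigl(\tfrac{\|\r_0\|_{L^1}}{\|\Gamma\|_{L^1}}\Bigr)^{1/2}\tau(t)^{-d/2-2}\,
\mathbf T_N\Bigl(t,\tfrac{x}{\tau(t)}\Bigr)+\frac{\dot\tau(t)}{\tau(t)}\,\sqrt{\r(t,x)}\,\mathbf I_d\,,
\]
i.e.\ the affine drift contributes a multiple of the identity. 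Without this extra term, the compatibility condition \eqref{eq_compnewton_petit} fails by exactly $\frac{\dot\tau}{\tau}\,\r\,\mathbf I_d$ (the drift part of $\nabla(\r u)-2\sqrt\r u\otimes\nabla\sqrt\r$ does not cancel but produces this tensor), and the continuity equation in \eqref{eq:NSKrevu_petit} is off by $\frac{d\dot\tau}{2\tau}\sqrt\r$. Note in this respect that your short cancellation check is run on the full density equation $\partial_t\r+\Div(\r u)=0$, whereas Definition~\ref{def:weak} and the corollary are formulated for $\sqrt\r$ with the Trace source; for $\sqrt\r$ the drift terms do \emph{not} cancel and must be matched by $\tfrac12\operatorname{Trace}$ of the drift part of $\mathbb T_N$. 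Including the drift in $\mathbb T_N$ (hence in $\mathbb S_N$) also clarifies two pieces of bookkeeping you flagged as the main obstacle: the term $\frac{\nu\dot\tau}{\tau}\nabla R$ of \eqref{eq:NSKrevu} is not cancelled by the test-function computation but \emph{absorbed} into $\Div\bigl(\nu\sqrt\r\,\mathbb S_N\bigr)$, and with \eqref{eq_compnewton_petit} the consistent viscous coefficient in the momentum equation is $\nu$ (the factor $\nu/\tau^2$ displayed in \eqref{eq:NSKrevu_petit} is a leftover from \eqref{eq:NSKrevu} and is dimensionally incompatible with the drift-inclusive $\mathbb S_N$). With this single amendment, your argument coincides with the paper's.
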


The main shortcoming of this construction is that we do not get the energy inequality corresponding to \eqref{Eru}
for the initial system
(but the regularity obtained ensures that, at any
time $t\ge 0$, the energy $E(t)$ is well defined).
Indeed, we remark that, if $U$ should be going to $0$ at infinity, then, our solution $u$
would then be a perturbation of the affine velocity field $(\dot{\tau}/\tau) x$ which increases at infinity.
In particular, performing back the change of variable \eqref{eq:uvFluid} 
in the energy estimate \eqref{eq:EE+DD<=EE0}, in the case $\|\rho_0\|_{L^1(\mathbb R^d)} = \|\Gamma\|_{L^1(\mathbb R^d)} $ we obtain:
\begin{align*}
& \dfrac{1}{2}\left[ \int_{\mathbb R^d} \rho(t,x) \left|u - \dfrac{\dot{\tau}}{\tau} x\right|^2 {\rm d}x  +
 \int_{\mathbb R^d} |\nabla \sqrt{\rho}(t,x)|^2{\rm d}x \right] 
 \\ 
&  + \int_{\mathbb R^d} \rho(t,x) \ln (\rho(t,x)) {\rm d}x+ d  \left( \ln(\tau(t)) + \frac{1}{\tau(t)^2}\right)\int_{\mathbb R} \rho(t,x){\rm d}x 
\\
&  + \int_0^t \left[ \int_{\mathbb R^d}\dfrac{\dot{\tau}}{\tau}\rho\left|u - \dfrac{\dot{\tau}}{\tau} x\right|^2 {\rm d}x + \nu \int_{\mathbb R^d} \rho |\mathbb D u - \dfrac{\dot{\tau}}{\tau} |^2 \right]{\rm d}x{\rm d}s  \leq C_0 .
\end{align*}
Another point of view consists in recalling that in \cite{CCH},  the
large time convergence of the second order momentum of $R$ is
established by using the a priori bounds provided by
\eqref{eq:EE+DD<=EE0}, \emph{and} the information that the energy $E$
defined in \eqref{Eru} is $o(\log t)$ as $t\to \infty$: even though
this information is weaker than the expected boundedness of $E$ (and
even, decay), it seems to
be needed in the proof, suggesting that either some tools are missing in the
study of $(R,U)$ to recover the energy inequality corresponding to \eqref{Eru}
for the initial system, or that it is just not possible.
\bigbreak

We complement the above results, valid for $\nu>0$, with a global
existence result in the case of the isothermal Korteweg
equation ($\eps>0$ and $\nu=0$). The proof is fairly different from
the case $\nu>0$, since it is based on nonlinear Schr\"odinger
equations, but is rather short. We choose to present this case so that the
family of results in this paper is consistent. 
Mimicking Definition~14 from \cite{AnMa09}, we set:
\begin{definition}\label{def:sol-korteweg}
  Let $d\ge 1$.
  Assume $\nu = 0$ and $\eps > 0$. 
Let $(\sqrt{\r_0} , \lambda_0) \in L^2(\R^d) \times L^2(\R^d)$.
We call global weak
solution to \eqref{fluide}, associated to the initial data
$(\sqrt{\r_0} , \lambda_0)$, any pair $(\sqrt\r,\sqrt\r u)$  such that if we define
$\r:=\(\sqrt{\r}\)^2$, 
$j:=\sqrt \r \times \sqrt\r u$, then we have:
\begin{itemize}
\item[i)] The  following regularities:
\begin{equation*}
  \sqrt \r \in L^{\infty}_{\rm loc}\(0,\infty; H^1 (\R^d)\),\quad
\sqrt\r u\in L^{\infty}_{\rm loc}\(0,\infty; L^2 (\R^d)\),
\end{equation*}
with the compatibility condition
\begin{equation*}
  \sqrt\r \ge 0\text{ a.e. on }(0,\infty)\times\R^d,\quad
  \sqrt\r u=0 \text{ a.e. on }\{\r=0\}. 
\end{equation*}
\item[ii)] For every $T>0$, for any test function $\varphi\in
  C_0^\infty([0,T[\times \R^d)$,
  \begin{equation*}
    \int_0^T \int_{\R^d}\(\r \partial_t \varphi +j\cdot \nabla \varphi\)\dd
    t \dd x + \int_{\R^d} \r_0\varphi(0)\dd x=0,
  \end{equation*}
  and for any test function $\eta\in C_0^\infty([0,T[\times
  \R^d;\R^d)$,
  \begin{align*}
    \int_0^T \int_{\R^d}&\( j\cdot \partial_t \eta
    +(\sqrt\r u)\otimes (\sqrt\r u):\nabla \eta +\nabla \r \Div \eta
    +\eps^2\nabla \sqrt\r \otimes \nabla \sqrt\r :\nabla
                          \eta - \frac{\eps^2}{4} \r \Delta \Div \eta \)\dd t\dd x\\
 &   +\int_{\R^d}\lambda_0\cdot \eta(0)\dd x=0.
  \end{align*}
\item[iii)] (Generalized irrotationality condition) For almost every
  $t\ge 0$,
  \begin{equation*}
    \nabla \wedge j = 2\nabla \sqrt\r \wedge \(\sqrt\r u\)
  \end{equation*}
  holds in the sense of distributions.
\end{itemize}
\end{definition}
Note that in the second point,  the quantum pressure (right hand side
of \eqref{fluide2}) 
has been recast in view of \eqref{eq_Ktwidentity}. Like before,
whenever $u$ is mentioned, it should be understood as
\begin{equation*}
  u = \frac{\sqrt\r u}{\sqrt\r}\mathbf 1_{\sqrt\r>0}. 
\end{equation*}
The generalized irrotationality condition, explained in
\cite[Remark~2]{AnMa09},  is the generalization of the property $\r
\nabla\wedge u=0$ of the smooth case $j=\r u$, to the notion of weak
solution. 

Also, Definition~\ref{def:sol-korteweg} is readily adapted to the case
of \eqref{RU} in the following statement. The first part of this result is the analogue
of \cite[Proposition~15]{AnMa09} in the isothermal case. 
\begin{proposition}\label{prop:korteweg}
 Let $d\ge 1$.  Assume $\nu=0$ and $\eps>0$. Let $\psi_0\in
 \textcolor{black}{H^1}\cap \F(H^\alpha)(\R^d)$ for some $0<\alpha\le 1$, and assume that
 the initial data for \eqref{fluide} are \emph{well-prepared} in the
 sense that
 \begin{equation*}
   \r_0 = |\psi_0|^2,\quad j_0 = \eps\IM \(\bar \psi_0\nabla
   \psi_0\). 
 \end{equation*}
 $(1)$ Then there exists a global  weak solution to
 \eqref{fluide}. Furthermore, the energy $E(t)$ defined by \eqref{Eru}
 is conserved for all time $t\ge 0$. \\
 $(2)$ If $\psi_0\in
 \textcolor{black}{H^1}\cap \F(H^1)(\R^d)$, then $(\sqrt R,\sqrt R U)$ defined by
 \begin{equation}\label{eq:uvKorteweg}
   \begin{aligned}
    &   \sqrt\r(t,x) =
  \frac{1}{\tau(t)^{d/2}}\sqrt R\(t,\frac{x}{\tau(t)}\)
\(\frac{\|\r_0\|_{L^1}}{\|\Gamma\|_{L^1}}\)^{1/2},\\
 &\sqrt \r u (t,x) =
\frac{1}{\tau(t)} \sqrt R U\(t,\frac{x}{\tau(t)}\)
\(\frac{\|\r_0\|_{L^1}}{\|\Gamma\|_{L^1}}\)^{1/2} +
  \frac{\dot\tau(t)}{\tau(t)} x \sqrt{\r(t,x)} ,
  \end{aligned}
 \end{equation}
 is
 a global weak solution to \eqref{RU}. The pseudo-energy $\mathcal E$,
 defined in \eqref{eq:ERU}, solves \eqref{eq:EE+DD=EE0}, where the
 dissipation is given by \eqref{eq:DRU}.
Equivalently, setting 
 \begin{equation*}
  \EE = \frac{1}{2\tau^2} \int_{\R^d} \left( |\sqrt R U|^2 + \eps^2
    |\nabla \sqrt{R}|^2 \right) 
+ \int_{\R^d} \( R|y|^2 + R\log R\),
  \quad  \DD =\frac{\dot \tau}{\tau^3}\int_{\R^d} |\sqrt R U |^2 +
   \eps^2|\nabla\sqrt R|^2,
 \end{equation*}
we have 
\begin{equation*}
  \EE(t) +\int_0^t \DD(s)   \dd s =\EE(0)  ,\quad \forall t\ge 0.
\end{equation*}
\end{proposition}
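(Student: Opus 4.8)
The plan is to construct the solution from the \emph{logarithmic Schr\"odinger equation} through the Madelung transform, in the spirit of \cite{AnMa09}. Writing $\psi=\sqrt\r\,e^{iS/\eps}$ and $u=\nabla S$, the well-prepared data $\r_0=|\psi_0|^2$, $j_0=\eps\IM(\bar\psi_0\nabla\psi_0)$ are precisely the hydrodynamic quantities attached to $\psi_0$, and \eqref{fluide} with $\nu=0$ is the hydrodynamic form of
\[
i\eps\,\partial_t\psi+\frac{\eps^2}{2}\Delta\psi=\(\log|\psi|^2\)\psi,\qquad \psi_{\mid t=0}=\psi_0,
\]
the isothermal pressure $\nabla\r=\r\nabla\log\r$ corresponding exactly to the nonlinearity $\log|\psi|^2$. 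I would therefore first solve this equation globally in $H^1\cap\F(H^\alpha)(\R^d)$, set $\r:=|\psi|^2$, $\sqrt\r:=|\psi|$, $j:=\eps\IM(\bar\psi\nabla\psi)$ and $\sqrt\r\,u:=\eps\IM(\bar\psi\nabla\psi)/|\psi|\,\mathbf 1_{\sqrt\r>0}$, and check that $(\sqrt\r,\sqrt\r u)$ satisfies Definition~\ref{def:sol-korteweg}.

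Global existence in $H^1$ is classical despite the non-Lipschitz nonlinearity (regularize $\log$ near the origin and pass to the limit), and the weight $\F(H^\alpha)$ is propagated on each finite interval; I would borrow these facts, together with the dispersive analysis of the equation, from \cite{CaGa18}. The two conserved quantities are the mass $\|\psi(t)\|_{L^2}^2=\|\r_0\|_{L^1}$ and the energy $\frac{\eps^2}{2}\int_{\R^d}|\nabla\psi|^2+\int_{\R^d}(\r\log\r-\r)$. Using the pointwise identity $\eps^2|\nabla\psi|^2=\eps^2|\nabla\sqrt\r|^2+\r|u|^2$, this energy equals $E-\|\psi\|_{L^2}^2$ with $E$ as in \eqref{Eru}, so its conservation together with that of the mass yields the conservation of $E(t)$ claimed in $(1)$. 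The weight coming from $\F(H^\alpha)$ is exactly what renders $\int\r\log\r$ finite, its negative part $\int_{\r<1}\r|\log\r|$ being controlled as in the small-value region of Lemma~\ref{lem:SigmaLlogL}. The regularity of item i) transfers from $\psi$: the diamagnetic inequality $|\nabla|\psi||\le|\nabla\psi|$ gives $\sqrt\r\in L^\infty_{\rm loc}(0,\infty;H^1)$, while $|\sqrt\r u|^2=|j|^2/\r\le\eps^2|\nabla\psi|^2$ gives $\sqrt\r u\in L^\infty_{\rm loc}(0,\infty;L^2)$.

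The verification of item ii) is the technical heart. The continuity equation $\partial_t\r+\Div j=0$ is the usual mass-current identity (the real logarithmic term does not contribute). For the momentum equation the key ingredients are the tensor decomposition
\[
\eps^2\RE\(\nabla\bar\psi\otimes\nabla\psi\)=(\sqrt\r u)\otimes(\sqrt\r u)+\eps^2\,\nabla\sqrt\r\otimes\nabla\sqrt\r,
\]
the identity turning the third-order term into the Korteweg contribution $-\tfrac{\eps^2}{4}\r\Delta\Div\eta$ after integration by parts, and the pressure term $\nabla\r$ produced by the isothermal nonlinearity. As in \cite{AnMa09}, these identities must be justified with only $H^1$ regularity, where $u$ is defined through the polar factorization on $\{\r>0\}$ alone: the delicate point is to show that $(\sqrt\r u)\otimes(\sqrt\r u)$ and $\eps^2\nabla\sqrt\r\otimes\nabla\sqrt\r$ reconstruct $\eps^2\RE(\nabla\bar\psi\otimes\nabla\psi)$ almost everywhere, including on the vacuum set $\{\r=0\}$. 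The generalized irrotationality condition iii) is a direct consequence of the same factorization, since $\nabla\wedge j=\eps\,\nabla\wedge\IM(\bar\psi\nabla\psi)=2\,\nabla\sqrt\r\wedge(\sqrt\r u)$.

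For part $(2)$, I would read \eqref{eq:uvKorteweg} as the hydrodynamic trace of the lens-type change of unknown $\psi(t,x)=\tau^{-d/2}e^{i\dot\tau|x|^2/(2\eps\tau)}\,\Psi(t,x/\tau)\,(\|\r_0\|_{L^1}/\|\Gamma\|_{L^1})^{1/2}$, the quadratic phase generating exactly the affine drift $\tfrac{\dot\tau}{\tau}x$ in the velocity and the relation $\ddot\tau=2/\tau$ being tuned to produce the harmonic term $2yR$ in \eqref{RU2}. A direct computation then shows that $\Psi$ solves a logarithmic Schr\"odinger equation with time-dependent dispersion $\tfrac{\eps^2}{2\tau^2}\Delta$ and harmonic potential $|y|^2$, whose associated current is $\sqrt R\,U=\eps\IM(\bar\Psi\nabla\Psi)/\sqrt R$; applying the part-$(1)$ machinery to $\Psi$ produces a weak solution $(\sqrt R,\sqrt R U)$ of \eqref{RU} in the sense of the adapted definition. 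The extra assumption $\psi_0\in\F(H^1)$ (rather than $\F(H^\alpha)$) guarantees the finite second moment $\int R|y|^2<\infty$ and that $U$, once the field $\tfrac{\dot\tau}{\tau}x$ is removed, lies in $L^2$, so that $\EE$ in \eqref{eq:ERU} is finite (invoking Lemma~\ref{lem:SigmaLlogL} for $\sqrt R\in H^1\cap\F(H^1)$). Finally, the law \eqref{eq:EE+DD=EE0} is nothing but the evolution of the energy of this NLS: because the dispersion coefficient $\tfrac{\eps^2}{2\tau^2}$ depends on time, the energy is no longer conserved, and differentiating it produces precisely the dissipation $\DD$ of \eqref{eq:DRU}, the factor $\dot\tau/\tau^3$ stemming from the time derivative of $\tau^{-2}$. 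I expect the genuine obstacle to be the $H^1$-level justification of the momentum identities of the third paragraph (polar factorization and the behaviour on the vacuum set), the transform in $(2)$ being comparatively mechanical once the moment bounds furnished by $\F(H^1)$ are available.
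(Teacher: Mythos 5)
Your proposal follows essentially the same route as the paper: global solvability of the logarithmic Schr\"odinger equation in $H^1\cap\F(H^\alpha)$ taken from \cite{CaGa18}, the Madelung/hydrodynamic identities justified at $H^1$ level through the polar factorization and tensor decomposition of \cite{AnMa09} (including the vacuum set and the generalized irrotationality condition), and, for part $(2)$, the lens-type conjugation leading to the equation with dispersion $\eps^2/(2\tau^2)$ and potential $|y|^2$, whose energy law from \cite{CaGa18} yields \eqref{eq:EE+DD=EE0}. The only (harmless) omission is the purely time-dependent gauge phase $\theta(t)=d\int_0^t\log\tau(s)\,\dd s-t\log\bigl(\|\r_0\|_{L^1}/\|\Gamma\|_{L^1}\bigr)$ included in the paper's change of unknown, which absorbs the constants coming from $\log|\psi|^2=\log|\Psi|^2-d\log\tau+\log\bigl(\|\r_0\|_{L^1}/\|\Gamma\|_{L^1}\bigr)$ so that $\Psi$ solves exactly \eqref{eq:Psi}; it does not affect the hydrodynamic variables.
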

\color{black}The proof of Proposition~\ref{prop:korteweg} relies on properties of the logarithmic Schr\"odinger
equation, which is the natural candidate to provide solutions to
\eqref{fluide}, as opposed to the nonlinear Schr\"odinger equation
with power-like nonlinearity in the polytropic case. The specificity of
this nonlinearity explains the presence of a (fractional)
momentum in the first part of the statement. We emphasize the fact
that the special structure of the initial data (due to the use of
Madelung transform) implies that the flow is irrotational (see also
the last point of Definition~\ref{def:sol-korteweg} and
\cite[Remark~2]{AnMa09} where it is discussed).
\color{black}
In view of \cite{CCH}, we readily infer the following corollary, which
is stronger than Corollary~\ref{cor:tempslong}: 
\begin{corollary}
  In the second case of Proposition~\ref{prop:korteweg}, every such
  global weak solution satisfies
  \begin{equation*}
    \int_{\R^d}
\begin{pmatrix}
    1\\
y\\
|y|^2
  \end{pmatrix}
R(t,y)\dd y\to
 \int_{{\mathbb R}^d}
  \begin{pmatrix}
    1\\
y\\
|y|^2
  \end{pmatrix}
\Gamma(y)\dd y ,\quad
    R(t)\rightharpoonup \Gamma \quad\text{in }L^1(\R^d), \text{ as }t\to\infty.
  \end{equation*}
\end{corollary}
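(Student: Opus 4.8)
The plan is to combine the exact energy equality of Proposition~\ref{prop:korteweg} with explicit evolution laws for the low-order moments of $R$, and to feed the resulting a priori bounds into the large-time analysis of \cite{CCH}. Throughout I write $m=\int_{\R^d}R\,\dd y=\int_{\R^d}\Gamma\,\dd y$, $P(t)=\int_{\R^d}yR\,\dd y$, $Q(t)=\int_{\R^d}\sqrt R\,\sqrt R U\,\dd y$, $M_2(t)=\int_{\R^d}|y|^2R\,\dd y$, $N(t)=\int_{\R^d}y\cdot(\sqrt R\,\sqrt R U)\,\dd y$, and $K(t)=\int_{\R^d}(|\sqrt R U|^2+\eps^2|\nabla\sqrt R|^2)\,\dd y$, so that $\EE=\tfrac{1}{2\tau^2}K+\int_{\R^d}R\log(R/\Gamma)$. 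Because the solution of Proposition~\ref{prop:korteweg}(2) stems from the logarithmic Schr\"odinger flow with $\psi_0\in H^1\cap\F(H^1)$, the density and current carry two finite, smoothly evolving momenta, which legitimates the integrations by parts below.

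First I would record that mass is conserved, so the zeroth moment equals $\int_{\R^d}\Gamma$ for every $t$, and that the weak convergence $R(t)\wto\Gamma$ in $L^1(\R^d)$ is nothing but Corollary~\ref{cor:tempslong}, which applies since the energy inequality \eqref{eq:EE+DD<=EE0} is implied by the energy equality of Proposition~\ref{prop:korteweg}. It remains to upgrade this to convergence of the first two moments. For the first moment, testing \eqref{RU1} and \eqref{RU2} (with $\nu=0$) against $y$ and integrating — the pressure and Korteweg contributions being exact divergences — yields the closed linear system $\dot P=\tau^{-2}Q$, $\dot Q=-2P$, i.e. $\tfrac{\dd}{\dd t}(\tau^2\dot P)+2P=0$. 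One checks directly that $1/\tau$ solves this equation, and reduction of order (using $\tfrac{\dd}{\dd t}(\tau^2(P_1\dot P_2-\dot P_1P_2))=0$) produces the second solution $t/\tau$; since $\tau(t)\Eq t\infty 2t\sqrt{\log t}$, both decay, whence $P(t)\to 0=\int_{\R^d}y\,\Gamma$.

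The second moment is the delicate point. The same manipulations give $\dot M_2=2\tau^{-2}N$ and, using $R\nabla(\Delta\sqrt R/\sqrt R)=\Div(\sqrt R\nabla^2\sqrt R-\nabla\sqrt R\otimes\nabla\sqrt R)$ together with $\int_{\R^d}\mathrm{Tr}(\cdots)=-2\int_{\R^d}|\nabla\sqrt R|^2$, the identity $\dot N=\tau^{-2}K-2M_2+dm$. Writing $F=M_2-\tfrac{dm}{2}=M_2-\int_{\R^d}|y|^2\Gamma$ turns this into the damped, forced equation $\tfrac{\dd}{\dd t}(\tau^2\dot F)+4F=2\tau^{-2}K$. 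The homogeneous equation $\tfrac{\dd}{\dd t}(\tau^2\dot h)+4h=0$ is solved by $h_1=\dot\tau/\tau$ (a one-line check using $\ddot\tau=2/\tau$ and $\dot\tau^2=4\log\tau$), and reduction of order gives a companion $h_2$ of order $1/\log t$; both tend to $0$. Variation of parameters then shows that $F(t)\to0$ as soon as the rescaled kinetic–quantum energy $\tfrac{1}{2\tau^2}K(t)=\EE(t)-\int_{\R^d}R\log(R/\Gamma)$ tends to $0$, the forcing $2\tau^{-2}K$ being thereby integrated out against $h_1,h_2$. Establishing $\tfrac{1}{2\tau^2}K(t)\to0$ is exactly where the finer large-time information of \cite{CCH} enters: it combines the energy equality $\tfrac{\dd}{\dd t}\EE=-\DD=-\tfrac{\dot\tau}{\tau^3}K$ with the bound $E=o(\log t)$ on the original energy \eqref{Eru}. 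This yields $M_2(t)\to\int_{\R^d}|y|^2\Gamma$.

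Finally I would package the three convergences. A uniform-in-time bound on $M_2$ (obtained by bounding $\int_{\R^d}R\log R$ from below through the maximum-entropy, i.e. Gaussian, inequality and using $\int_{\R^d}R\log(R/\Gamma)\le\EE(0)$) provides tightness, while the relative-entropy bound and Lemma~\ref{lem:SigmaLlogL} applied to $\sqrt R$ provide uniform integrability; together with $R(t)\wto\Gamma$ and the just-proved convergence of the second moment, this yields $\int_{\R^d}\phi\,R(t)\to\int_{\R^d}\phi\,\Gamma$ for the weights $\phi=1,y,|y|^2$, which is the asserted statement. I expect the main obstacle to be precisely the second-moment step, and within it the decay $\tfrac{1}{2\tau^2}K(t)\to0$: mere boundedness of $K/\tau^2$ (all that the energy inequality guarantees) would leave the forced response of $F$ at a possibly nonzero constant, so the sharper control of \cite{CCH} — not just \eqref{eq:EE+DD<=EE0} — is indispensable here.
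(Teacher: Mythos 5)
Your proposal is correct and is essentially the paper's proof: the paper establishes this corollary simply by invoking \cite{CCH}, and your moment computations ($\dot P=\tau^{-2}Q$, $\dot Q=-2P$ with decaying fundamental solutions $1/\tau$ and $t/\tau$; the equation $\frac{\dd}{\dd t}(\tau^2\dot F)+4F=2\tau^{-2}K$ with $\dot\tau/\tau$ solving the homogeneous problem) faithfully reconstruct the mechanism inside that reference, while you, like the paper, defer the decisive decay $\tau^{-2}K(t)\to 0$ to it. The one point you should state explicitly is why the hypothesis $E=o(\log t)$ required by \cite{CCH} is available here: by Proposition~\ref{prop:korteweg}(1) the energy $E$ of \eqref{Eru} is \emph{conserved} (and finite for $\psi_0\in H^1\cap\F(H^1)$, e.g.\ via Lemma~\ref{lem:SigmaLlogL}), and this conservation --- unavailable for the viscous weak solutions of Theorem~\ref{theo:main} --- is exactly what makes this corollary stronger than Corollary~\ref{cor:tempslong}.
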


\begin{remark}
  In view of the proof of Proposition~\ref{prop:korteweg},
  Theorem~1.12 in \cite{CaGa18} implies that
  Proposition~\ref{prop:korteweg} and its corollary (from \cite{CCH}) remain valid in
  the case where the above pressure law $p(\r)= \r$ is replaced for
  instance by
  \[p(\r) = c_0\r+ \sum_{j=1}^Nc_j \r^{\gamma_j},\quad c_j>0,\ 0\le
    j\le N,\quad 1<\gamma_j<\frac{d+2}{(d-2)_+}.\]
  \end{remark}

\color{black}

\begin{remark}
  Since our reformulation of \eqref{fluide} in terms of the unknowns
  $(R,U)$ provides extra positivity properties, one may ask if the
  isothermal case can be obtained as the limit $\gamma\to 1$ in the
  barotropic case, where the pressure law is $p(\r)=\r^\gamma$,
  $\gamma>1$. A first aspect is that such a limit might be possible
  only \emph{locally in time}, for as proven in \cite{CCH} (isothermal
  case) and \cite{CCH-p} (barotropic case), $\r$ enjoys dispersive
  properties with a rate that changes precisely for the value
  $\gamma=1$.  For bounded time, it is plausible that the limit $\gamma\to 1$ might be handled
  in terms of $(R,U)$ (adapted to the case  $\gamma>1$) when $\epsilon >0$ because of a further uniform bound $\sqrt{R} \in H^1(\mathbb R^d)$  due to the Korteweg term.  On the other hand, having proven
  Theorem~\ref{theo:main}, one may ask if the solutions from
  Proposition~\ref{prop:korteweg} can be obtained through the inviscid
  limit $\nu\to 0$. Such a convergence has been proven in \cite{BGL19}
  for the barotropic case, and \cite{ChaAA} for the (damped)
  isothermal case, both times in a periodic setting $x\in \T^d$. The
  damping in \cite{ChaAA} can easily be removed, but in order to
  consider the case $x\in \R^d$, the order of the limits $\ell\to
  \infty$ and $\nu\to 0$ is certainly a delicate issue, which we leave
  out at this stage.
Finally, both limits $\gamma \to 1$ and $\nu \to 0$ seem highly singular when $\epsilon =0$ (or goes simultaneously to $0$) even in terms of $(R,U)$.  Concerning the limit $\gamma \to 1$ for instance, the estimates established in \cite{CCH-p} are then not  uniform in $\gamma$.  
  
\end{remark}
\color{black}

\subsubsection*{Organization of the paper}
Until the end of Section~\ref{sec:tore}, we assume $\nu>0$. In
Section~\ref{sec:reg}, we construct solutions to \eqref{RU-drag-reg} on
the torus $\T_\ell^d$ with strictly positive densities. In
Section~\ref{sec:drag}, we obtain solutions to \eqref{RU-drag}
in the presence of drag forces, $r_0,r_1>0$, by passing to the limit
$\delta_1,\delta_2,\eta_1,\eta_2\to $ in
\eqref{RU-drag-reg}. Theorem~\ref{theo:main} 
is proved in Section~\ref{sec:tore}, where we let $r_0,r_1\to 0$ and
$\ell\to \infty$ (with possibly $\eps\to
0$). Section~\ref{sec:korteweg} is devoted to the proof of
Proposition~\ref{prop:korteweg} ($\nu=0$, $\eps>0$). In an appendix,
we give more details 
about the derivation of an identity appearing in
Section~\ref{sec:tore}.

\section{Construction of solutions to the regularized system}\label{sec:reg}
We start this study by constructing weak solutions to the system 
\eqref{RU-drag-reg} on the torus $\T_\ell^d$ with strictly positive
densities and deriving 
further properties satisfied by these solutions. 
We recall that in system \eqref{RU-drag-reg} the parameters $r_0, r_1 , \eps >0$ are positive, which will be hence assumed through this section.

System \eqref{RU-drag-reg} is endowed with some estimates.
 We first note that, integrating \eqref{RU-drag-reg1} we obtain the conservation of mass:
 \begin{equation} \label{mass_conservation}
\int_{\T^d_\ell}R(t) = \int_{\T^d_\ell}R_0 .
\end{equation}
Then,  by multiplying formally \eqref{RU-drag-reg2} with $U/\tau^2$ and combining with equation \eqref{RU-drag-reg1}, we obtain that reasonable solutions to \eqref{RU-drag-reg} should satisfy
the energy estimate: 
\begin{equation} \label{RU-drag-reg-estimation}
\begin{aligned}
\frac{\dd}{\dd t} \Ereg(R,U) + \Dreg(R,U) = \frac{2d \delta_1}{\tau^2}\int_{\T^d_\ell} R - \frac{\nu \dot \tau}{\tau^3} \int_{\T^d_\ell} R \Div U,
\end{aligned}
\end{equation}
where 
\begin{equation} \label{def_energie-reg}
\begin{aligned}
\Ereg(R,U) 
&= \frac{1}{2\tau^2} \int_{\T^d_\ell} \left( R|U|^2 + \eps^2 |\nabla \sqrt{R}|^2 \right)
+ \int_{\T^d_\ell} \left( R|y|^2 + R\log R + \frac{\eta_1}{\alpha+1} R^{-\alpha} \right)\\
&\quad
+ \frac{\eta_2}{2\tau^2} \int_{\T^d_\ell} |\nabla \Delta^s R|^2,
\end{aligned}
\end{equation}
and
\begin{equation*} 
\begin{aligned}
\Dreg(R,U) 
&= \frac{\dot \tau}{\tau^3} \int_{\T^d_\ell}\left( R|U|^2 + \eps^2 |\nabla \sqrt{R}|^2 +\eta_2 |\nabla \Delta^s R|^2\right)
 + \frac{\nu}{\tau^4} \int_{\T^d_\ell} R |\D U|^2 \\
&\quad
+ \frac{\delta_2}{\tau^4}   \int_{\T^d_\ell} |\Delta U|^2
+ \frac{\delta_1 \eta_2}{\tau^4} \int_{\T^d_\ell} |\Delta^{s+1} R|^2 
+ \frac{4\delta_1}{\tau^2}  \int_{\T^d_\ell} |\nabla \sqrt{R_N}|^2
\\
& \quad + \frac{4\delta_1 \eta_1}{ \alpha \tau^2} \int_{\T^d_\ell} |\nabla R^{-\alpha/2}|^2 
+ \frac{r_0}{\tau^4} \int_{\T^d_\ell} |U|^2
+ \frac{r_1}{\tau^4} \int_{\T^d_\ell}  R |U|^4 \\
& \quad + \frac{\delta_1 \eps^2}{2\tau^4} \int_{\T^d_\ell} R| \nabla^2 \log R|^2.
\end{aligned}
\end{equation*}
Note that the term appearing on the last line is obtained thanks to the exact formula: 
\[
\frac12 \int R |\nabla^2 \log R|^2 = \int \frac{\Delta \sqrt{R}}{\sqrt{R}} \Delta R.
\]
On the other hand, multiplying formally \eqref{RU-drag-reg1} by
a smooth function $\Psi$ and \eqref{RU-drag-reg2} by a smooth vector field $\Phi $ yields respectively 
\begin{equation}\label{RU-drag-reg1-weak}
\int_{\T^d_\ell} R_0 \Psi(0) 
+\int_0^T \!\!\! \int_{\T^d_\ell} R \partial_t \Psi   
+  \int_0^T \!\!\! \int_{\T^d_\ell} \frac{1}{\tau^2} R U \cdot \nabla \Psi 
+  \delta_1  \int_0^T \!\!\! \int_{\T^d_\ell}  \frac{1}{\tau^2}   R \Delta  \Psi  =0,
\end{equation}
and
\begin{equation}\label{RU-drag-reg2-weak}
\begin{aligned}
&\int_{\T^d_\ell} R_0 U_0 \Phi(0) 
+\int_0^T \!\!\! \int_{\T^d_\ell} RU \cdot \partial_t \Phi  
+  \int_0^T \!\!\! \int_{\T^d_\ell} \frac{1}{\tau^2} R U \otimes U : \nabla \Phi \\
&\qquad\quad
=  \int_0^T \!\!\! \int_{\T^d_\ell} R (2y \cdot \Phi - \Div \Phi)   
+  r_0 \int_0^T \!\!\!  \int_{\T^d_\ell}  \frac{1}{\tau^2}   U \cdot \Phi 
+  r_1 \int_0^T \!\!\! \int_{\T^d_\ell}  \frac{1}{\tau^2}  R|U|^2  U \cdot \Phi \\
&\qquad\qquad 
+ \eps^2 \int_0^T \!\!\! \int_{\T^d_\ell}  \frac{1}{2\tau^2} \left[ 2 \Delta \sqrt R \nabla \sqrt R \cdot \Phi + \Delta \sqrt R \sqrt R \Div \Phi \right] \\
&\qquad\qquad  
+ \nu  \int_0^T \!\!\! \int_{\T^d_\ell} \frac{1}{\tau^2}  R \D U : \nabla \Phi   
+   \nu  \int_0^T \!\!\! \int_{\T^d_\ell}  \frac{\dot \tau}{\tau}  R \Div \Phi \\
&\qquad\qquad
+  \delta_1  \int_0^T \!\!\! \int_{\T^d_\ell}  \frac{1}{\tau^2}  \nabla U : \nabla R \otimes \Phi   
+ \delta_2  \int_0^T \!\!\! \int_{\T^d_\ell} \frac{1}{\tau^2}  \Delta U \cdot \Delta \Phi \\
&\qquad\qquad  
+ \eta_1  \int_0^T \!\!\! \int_{\T^d_\ell}  R^{-\alpha} \Div \Phi   
+ \eta_2  \int_0^T \!\!\!  \int_{\T^d_\ell} \frac{1}{\tau^2} \Delta^{s+1} R \Delta^s
\left[ \nabla R \cdot \Phi + R \Div \Phi \right].
\end{aligned}
\end{equation}
So, to define weak solutions to \eqref{RU-drag-reg}, we look for minimal regularity assumptions that 
are induced by energy estimate \eqref{RU-drag-reg-estimation} and which make \eqref{RU-drag-reg1-weak}-\eqref{RU-drag-reg2-weak} meaningful for smooth test-functions. 
For this, we first recall the following lemma -- which is reminiscent of \cite[Lemma~2.1]{BD06b} with a slightly different statement -- 
to estimate negative power of the density which naturally appear in the formulation \eqref{RU-drag-reg}:

\begin{lemma}\label{lem:lowerbound}
For $n \in \N^*$ and $\Omega= \T^d$ or $\Omega = \R^d$, there holds
\[
\| \nabla^n (f^{-1}) \|_{L^2(\Omega)}
\lesssim \left( 1+\| f^{-1}\|_{L^4(\Omega)} + \| f^{-1} \|_{L^{2(n+1)}(\Omega) } \right)^{n+1}  \left( 1+ \| f \|_{H^{\sigma} (\Omega)}  \right)^{n}
\]
with $\sigma > n + d/2$.
\end{lemma}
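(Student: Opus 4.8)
The plan is to reduce the estimate to two elementary ingredients: the combinatorial structure of the derivatives of $f^{-1}$, and the observation that the regularity threshold $\sigma > n + d/2$ is exactly what is needed to place \emph{every} derivative of $f$ in $L^\infty$, so that the whole burden of the negative powers of $f$ is carried by a single factor.

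First I would expand $\nabla^n(f^{-1})$. Applying the Faà di Bruno formula to the composition $t \mapsto 1/t$, for which $\tfrac{\dd^p}{\dd t^p}(1/t) = (-1)^p p!\, t^{-(p+1)}$ (or, equivalently, arguing by induction on $n$ starting from $\nabla(f^{-1}) = -f^{-2}\nabla f$), one finds that $\nabla^n(f^{-1})$ is a finite linear combination of monomials of the form
\[
f^{-(p+1)} \prod_{j=1}^p \nabla^{k_j} f, \qquad k_j \ge 1,\quad \sum_{j=1}^p k_j = n, \quad 1 \le p \le n,
\]
the number of such monomials depending only on $n$ and $d$. The induction is transparent: differentiating a monomial either raises one of the orders $k_j$ (keeping $p$ fixed), or lets the derivative fall on $f^{-(p+1)}$, turning it into $f^{-(p+2)}\nabla f$ times the old product (raising $p$ by one and appending a factor $\nabla f$); in both cases the displayed constraints are preserved.

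Next I would estimate a single monomial in $L^2(\Omega)$ by Hölder's inequality, placing the negative power of $f$ in $L^2$ and each derivative factor in $L^\infty$:
\[
\Big\| f^{-(p+1)} \prod_{j=1}^p \nabla^{k_j} f \Big\|_{L^2}
\le \| f^{-(p+1)}\|_{L^2} \prod_{j=1}^p \|\nabla^{k_j} f\|_{L^\infty}
= \|f^{-1}\|_{L^{2(p+1)}}^{p+1}\prod_{j=1}^p \|\nabla^{k_j} f\|_{L^\infty}.
\]
The key point is that, since $k_j \le n$ and $\sigma > n + d/2$, we have $\sigma - k_j > d/2$, so the Sobolev embedding $H^{\sigma - k_j}(\Omega) \hookrightarrow L^\infty(\Omega)$ gives $\|\nabla^{k_j} f\|_{L^\infty} \lesssim \|f\|_{H^\sigma}$ on both $\Omega = \T^d$ and $\Omega = \R^d$. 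Hence each monomial is bounded by $\|f^{-1}\|_{L^{2(p+1)}}^{p+1}\, \|f\|_{H^\sigma}^p$.

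Finally I would absorb the exponent $2(p+1)$ into the two prescribed ones. Since $1 \le p \le n$ we have $2(p+1) \in [4, 2(n+1)]$, and interpolation of Lebesgue norms followed by Young's inequality yields $\|f^{-1}\|_{L^{2(p+1)}} \le \|f^{-1}\|_{L^4} + \|f^{-1}\|_{L^{2(n+1)}}$. Using $p+1 \le n+1$ and $p \le n$ together with $1 + \|\cdot\| \ge 1$, each monomial is dominated by the common expression $(1 + \|f^{-1}\|_{L^4} + \|f^{-1}\|_{L^{2(n+1)}})^{n+1}(1+\|f\|_{H^\sigma})^n$, and summing over the finitely many monomials gives the claim. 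The only genuine point requiring care is the bookkeeping of the Faà di Bruno expansion, together with the verification that the worst case $p=1$ (all derivatives on a single factor, i.e.\ the term $f^{-2}\nabla^n f$) is precisely the one dictating the threshold $\sigma > n + d/2$; everything else is Hölder's inequality, the Sobolev embedding, and interpolation.
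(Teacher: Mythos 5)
Your proof is correct and takes essentially the same route as the paper's: an expansion of $\nabla^n(f^{-1})$ into monomials $f^{-(p+1)}\prod_{j}\nabla^{k_j}f$ with $\sum_j k_j = n$, H\"older's inequality placing every derivative factor in $L^\infty$ via the Sobolev embedding $H^{\sigma-k_j}(\Omega)\hookrightarrow L^\infty(\Omega)$ (which is where $\sigma > n+d/2$ enters) and the negative power in $L^2$, followed by interpolation of $\|f^{-1}\|_{L^{2(p+1)}}$ between the endpoint exponents $4$ and $2(n+1)$. Your write-up merely makes explicit the inductive bookkeeping of the expansion and the interpolation step that the paper asserts without comment.
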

\begin{proof}
Recall the embedding $H^{d/2+0}(\Omega) \hookrightarrow L^\infty (\Omega)$.
We compute
\[
|\nabla^n (f^{-1})|^2 
\lesssim 
\sum_{j=1}^{n} \sum_{i_1 + \cdots + i_j = n} \frac{|\nabla^{i_1} f|^2
  \cdots |\nabla^{i_j} f|^2}{f^{2(j+1)}},
\]
hence, for any $j \geq 1,$ we have:
\[
\begin{aligned}
\int \frac{|\nabla^{i_1} f|^2 \cdots |\nabla^{i_j} f|^2}{f^{2(j+1)}}\, \dd x 
&\lesssim \| \nabla^{i_1} f \|_{L^\infty(\Omega)}^2 \cdots \|
\nabla^{i_j} f \|_{L^\infty(\Omega)}^2 \int f^{-2(j+1)} \,  \dd x \\ 
&\lesssim  
\| f \|_{H^{\sigma}(\Omega}^{2j} \,
\| f^{-1} \|_{L^{2(j+1)}}^{2(j+1)} \\
&\lesssim  
\left( 1+ \| f \|_{H^{\sigma}(\Omega)} \right)^{2n}
\left(1+ \| f^{-1} \|_{L^{2(j+1)}(\Omega)} \right)^{2(n+1)} \\
&\lesssim  
\left( 1+ \| f \|_{H^{\sigma}(\Omega)} \right)^{2n}
\left(1+ \| f^{-1} \|_{L^{4}(\Omega)} + \| f^{-1} \|_{L^{2(n+1)}(\Omega)} \right)^{2(n+1)},
\end{aligned}
\]
which completes the proof.
\end{proof}
Since $\Ereg$ enables to control the $H^{2s+1}$-norm of $R$ together with the mean of $R^{-\alpha}$, we may infer that, for $\alpha >4$ and 
$s > d$, the energy estimate \eqref{RU-drag-reg-estimation} implies that $1/R$ is continuous. We also recall that 
the Laplace equation on the torus enjoys classical elliptic estimate
so that the dissipation $\Dreg$ (note that $r_0 , \delta_2>0$) yields $U \in L^2_{\mathrm{loc}}(\mathbb R^+;H^2(\mathbb T^d_{\ell})).$ 
Introducing the regularity expected for $R$ and $U$ into the continuity equation \eqref{RU-drag-reg1} entails that $\partial_t R \in L^2_{\mathrm{loc}}(\mathbb R^+;H^1(\mathbb T^d_{\ell})).$ Then, our definition of weak solution to \eqref{RU-drag-reg} reads as follows: 
\begin{definition}\label{def:weak-sol}
Given $(R_0,U_0) \in L^{1}(\mathbb T^d_{\ell}) \times L^{2}(\mathbb T^d_{\ell}),$ we say that $(R,U)$ is a global weak solution to \eqref{RU-drag-reg} associated to the initial data $(R_0,U_0) $ if we have:
\begin{enumerate}[label={\normalfont(\roman*)}]
\item $(R,U)$ satisfies
\begin{equation}\label{estim-def}
\begin{aligned}
& R \in H^{1}_{\mathrm{loc}}(\mathbb R^+ ; H^{1}(\mathbb T^d_{\ell}))
\cap  C(\mathbb R^+ ;H^{2s}(\mathbb T^d_{\ell})) \cap
L^2_{\mathrm{loc}}(\mathbb R^+ ; H^{2s+2}(\mathbb T^d_{\ell}))  \text{
  with } 1/R \in C(\mathbb R^+ \times \mathbb T^d_{\ell}),  \\
& U \in L^{\infty}_{\mathrm{loc}}(\mathbb R^+ ; L^2(\mathbb T^d_{\ell})) \cap L^2_{\mathrm{loc}}(\mathbb R^+ ; H^2(\mathbb T^d_{\ell})).
\end{aligned}
\end{equation}

\item Equation \eqref{RU-drag-reg1-weak} holds true for any $\Psi 
 \in \DD ([0,\infty) \times \T^d_\ell )$.
\item Equation \eqref{RU-drag-reg2-weak} holds true for any 
$\Phi \in \DD ([0,\infty) \times \T^d_\ell )^d.$
\end{enumerate}
\end{definition}

\begin{remark}
Thanks to the above remarks, the regularity statement (i) is
sufficient to obtain that all the terms in
\eqref{RU-drag-reg1-weak}-\eqref{RU-drag-reg2-weak} are well-defined. 
\end{remark}

In this section, we restrict to initial data with smooth and strictly positive density. This means that we shall assume that
$(R_0,U_0)$ satisfy:
\begin{equation}\label{RUinitial-theta2}
 R_0 \in \mathcal D(\mathbb T^d_{\ell}), \quad U_0 \in L^2(\mathbb T^d_{\ell}), \quad \inf_{y \in \T^d_\ell} R_0(y) \ge \theta >0 .
\end{equation}

The first main result of this section is the following proposition:

\begin{proposition}\label{prop:sol-approche}
Given initial data $(R_0,U_0)$ satisfying \eqref{RUinitial-theta2}, there exists a global solution $(R,U)$ to \eqref{RU-drag-reg} associated to $(R_0,U_0)$ on the torus $\T^d_\ell$, which satisfies moreover the 
conservation of mass~\eqref{mass_conservation} 
and the energy estimate
 \begin{equation} \label{RU-drag-reg-estimation.1}
\Ereg(R,U)(T) +  \int_0^T \Dreg(R,U)(s) \dd s
 \leq C_0 (\Ereg(R_0,U_0))   ,  \quad \text{for a.e.\ } T \geq 0,
 \end{equation} 
 for some constant $C_0>0$ depending on $\Ereg(R_0,U_0)$.

\end{proposition}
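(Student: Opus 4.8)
The plan is to construct the solution by a Faedo--Galerkin scheme in which the velocity is truncated to a finite-dimensional space while the density is resolved exactly, in the spirit of \cite{VasseurYu,VasseurYuInventiones}, and then to promote the resulting local solution to a global one using the a priori estimate \eqref{RU-drag-reg-estimation}. Fix $n\in\N$, let $X_n=\mathrm{span}\{e_1,\dots,e_n\}$ be spanned by the first $n$ eigenfunctions of $-\Delta$ on $\T^d_\ell$, and look for $U_n\in C([0,T];X_n)$. Given $\bar U\in C([0,T];X_n)$, which is smooth in space with all derivatives controlled by finite-dimensionality, I would first solve the linear parabolic continuity equation $\partial_t R+\frac{1}{\tau^2}\Div(R\bar U)=\frac{\delta_1}{\tau^2}\Delta R$ with $R_{\mid t=0}=R_0$. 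Standard parabolic theory provides a unique solution $R=R[\bar U]$, smooth since $R_0\in\mathcal D(\T^d_\ell)$ and $\bar U$ are; rewriting the equation in nonconservative form and applying the maximum principle gives, for each fixed $n$, a strictly positive lower bound on $[0,T]$ of Gr\"onwall type, controlled by $\|\Div\bar U\|_{L^1(0,T;L^\infty)}$, together with the upper bound and the higher Sobolev regularity required by \eqref{estim-def}. Integrating over $\T^d_\ell$ yields at once the conservation of mass \eqref{mass_conservation}.

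With $R=R[\bar U]$ at hand, the next step is to solve the momentum equation \eqref{RU-drag-reg2} projected onto $X_n$: find $U_n\in C([0,T];X_n)$ such that $\frac{d}{dt}\int_{\T^d_\ell}R\,U_n\cdot e_i$ equals the pairing of the right-hand side of \eqref{RU-drag-reg2} against $e_i$, for $i=1,\dots,n$. Since $R>0$, the mass matrix $(\int_{\T^d_\ell}R\,e_i\,e_j)_{ij}$ is positive definite, so this is a nondegenerate system of ordinary differential equations; because all spatial norms are equivalent on $X_n$, the cubic drag term $\frac{r_1}{\tau^2}R|U_n|^2U_n$, the bilaplacian $\frac{\delta_2}{\tau^2}\Delta^2U_n$ and the Korteweg term pose no difficulty and the right-hand side is locally Lipschitz, whence local solvability. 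The composition $\bar U\mapsto R[\bar U]\mapsto U_n$ is a continuous self-map of a ball of $C([0,T];X_n)$ whose image is equicontinuous in time; Arzel\`a--Ascoli together with Schauder's fixed point theorem then produce, on a short time interval, a solution $(R_n,U_n)$ of the Galerkin system.

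The heart of the matter is the energy estimate. Testing the projected momentum equation against $U_n/\tau^2$, which is admissible since $U_n\in X_n$ so that the Galerkin projection is transparent, and combining with the continuity equation reproduces the identity \eqref{RU-drag-reg-estimation}; its last dissipative term stems from the identity $\frac{1}{2}\int R|\nabla^2\log R|^2=\int\frac{\Delta\sqrt R}{\sqrt R}\Delta R$ relating the $\delta_1$-diffusion to the Korteweg term. The two source terms on the right-hand side of \eqref{RU-drag-reg-estimation} are controlled uniformly in $T$: the first equals $\frac{2d\delta_1}{\tau^2}\|R_n\|_{L^1}$, integrable in time because mass is conserved and $\int_0^\infty\tau^{-2}<\infty$; the second, $-\frac{\nu\dot\tau}{\tau^3}\int R_n\Div U_n$, is bounded through Cauchy--Schwarz and $|\Div U_n|\le\sqrt d\,|\D U_n|$ by $\frac{\nu\dot\tau}{\tau^3}\sqrt d\,\|R_n\|_{L^1}^{1/2}(\int R_n|\D U_n|^2)^{1/2}$ and absorbed by Young's inequality into the dissipation $\frac{\nu}{\tau^4}\int R_n|\D U_n|^2$, using $\frac{\dot\tau}{\tau^3}=-\frac{1}{2}\frac{d}{dt}\tau^{-2}\in L^1(0,\infty)$. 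This delivers \eqref{RU-drag-reg-estimation.1} uniformly in $n$; in particular $\int R_n|U_n|^2$ stays bounded, and since the density is bounded below for fixed $n$ the Galerkin coefficients cannot blow up, so the local solution extends to all $t\ge0$.

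Finally, letting $n\to\infty$, the uniform bounds of \eqref{RU-drag-reg-estimation.1} furnish weak-$*$ limits for $R_n$ (bounded in $L^\infty_{\rm loc}(\R^+;H^{2s+1})\cap L^2_{\rm loc}(\R^+;H^{2s+2})$) and for $U_n$ (bounded in $L^\infty_{\rm loc}(\R^+;L^2)\cap L^2_{\rm loc}(\R^+;H^2)$); bounding $\partial_t R_n$ and $\partial_t(R_nU_n)$ from the equations and invoking the Aubin--Lions lemma yields strong convergence of $R_n$ in $C_{\rm loc}(\R^+;H^{2s+1-\epsilon})$ and of $U_n$ in $L^2_{\rm loc}(\R^+;L^2)$, which suffices to pass to the limit in every term of \eqref{RU-drag-reg1-weak}--\eqref{RU-drag-reg2-weak} and, by weak lower semicontinuity, to inherit \eqref{RU-drag-reg-estimation.1}. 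I expect the main obstacle to be the propagation of a \emph{uniform} (in $n$) strictly positive lower bound on the density: the maximum-principle bound above degenerates as $n\to\infty$, so positivity must instead be read off the estimates, namely the energy control of $\int R_n^{-\alpha}$ and the dissipation control of $\int|\nabla R_n^{-\alpha/2}|^2$, which through Lemma~\ref{lem:lowerbound} (for $\alpha>4$ and $s>d$) bound $1/R_n$ in $L^\infty$ uniformly. This bound is what legitimizes dividing by $R$, controls the singular terms $\eta_1\nabla R^{-\alpha}$ and $\eta_2 R\nabla\Delta^{2s+1}R$, and provides the strong convergence needed for the cubic drag term; it is the crux on which the whole construction rests.
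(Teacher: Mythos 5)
Your proposal is correct and follows essentially the same route as the paper: a Faedo--Galerkin scheme with the density solved exactly and the velocity projected onto Fourier modes (the fixed-point details you spell out are precisely those the paper delegates to \cite{VasseurYu}), the energy identity obtained by testing with $U_n/\tau^2$ and the right-hand side absorbed via conservation of mass, Young's inequality and $\int_0^\infty (1+\dot\tau^2)/\tau^2<\infty$, and the limit $n\to\infty$ by Aubin--Lions plus lower semicontinuity. You also correctly identify the crux the paper relies on, namely that the uniform positive lower bound on the density comes not from the maximum principle but from the $\eta_1$-energy and $H^{2s+1}$ controls through Lemma~\ref{lem:lowerbound}.
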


\begin{remark} \label{rem:regularite-weaksol-1}
We note that the energy estimate \eqref{RU-drag-reg-estimation.1}
together with \eqref{mass_conservation} entail that the solution 
we construct enjoys the following regularity properties, with norms
corresponding to these spaces bounded with respect to $\Ereg(R_0,U_0)$ only: 
\[
\begin{aligned}
& R (1+|y|^2 + |\log R|) \in L^\infty_{\mathrm{loc}}(\mathbb R^+; L^1(\mathbb T^d_{\ell})), && 
\sqrt{R} U  \in L^\infty_{\mathrm{loc}}(\mathbb R^+ ; L^2(\mathbb T^d_{\ell})), \\
& \sqrt{\nu} \, \sqrt{R} \D U  \in L^2_{\mathrm{loc}}(\mathbb R^+ ;  L^2(\mathbb T^d_{\ell})), &&
\eps \nabla \sqrt{R} \in L^\infty_{\mathrm{loc}}(\mathbb R^+ ; L^2(\mathbb T^d_{\ell})),\\  
& \sqrt{r_0}\, U  \in L^2_{\mathrm{loc}}(\mathbb R^+ ;L^2(\mathbb T^d_{\ell})), &&
\sqrt{r_1}\, R^{\frac{1}{4}} U \in  L^4_{\mathrm{loc}}(\mathbb R^+ ; L^4(\mathbb T^d_{\ell})) , \\
& \sqrt{\delta_2} \, \Delta U  \in L^2_{\mathrm{loc}}(\mathbb R^+ ; L^2(\mathbb T^d_{\ell})), &&
\sqrt{\eta_2} \, R  \in L^\infty_{\mathrm{loc}}(\mathbb R^+ ; H^{2s+1}(\mathbb T^d_{\ell})), \\
& \eta_1^{\frac{1}{\alpha}}\, R^{-1} \in L^\infty_{\mathrm{loc}}(\mathbb R^+ ; L^{\alpha}(\mathbb T^d_{\ell})), &&
\sqrt{\delta_1 \eta_1}\, \nabla R^{-\frac{\alpha}{2}} \in L^2_{\mathrm{loc}}(\mathbb R^+ ; L^2(\mathbb T^d_{\ell})), \\
& \sqrt{\nu \eps^2 }\, \nabla^2 \sqrt{R} \in L^2_{\mathrm{loc}}(\mathbb R^+ ; L^2(\mathbb T^d_{\ell})), &&
(\nu \eps^2 )^{\frac{1}{4}}\, \nabla R^{\frac{1}{4}} \in L^4_{\mathrm{loc}}(\mathbb R^+ ; L^4(\mathbb T^d_{\ell})),\\
&\sqrt{\delta_1\eta_2}\, \Delta^{s+1} R  \in L^2_{\mathrm{loc}}(\mathbb R^+ ; L^2(\mathbb T^d_{\ell})).
\end{aligned}
\]
We refer to \eqref{eq:equivJungel} for the regularity claim on the before-last line. Also, combining these bounds with Lemma \ref{lem:lowerbound}, we obtain that, for arbitrary $T>0, $ there exists a $C(\Ereg(R_0,U_0),\eta_1,\eta_2,\theta,T)>0$ so that
\begin{equation}\label{estim-lowerbdd}
\| 1/R \|_{L^\infty ((0,T) \times \T^d_\ell)} \le C(\Ereg(R_0,U_0),\eta_1, \eta_2, \theta,T) .
\end{equation}
\end{remark}

\medskip

The proof of Proposition \ref{prop:sol-approche} is the content of the
next subsection. Then in the last subsection, we focus on a further estimate satisfied by the weak solutions that we construct.

\subsection{Proof of Proposition \ref{prop:sol-approche}.}
\label{sec:regularised} 

The plan of the proof follows closely the method of \cite{VasseurYu}.
In the whole section $(R_0,U_0)$ is a fixed initial data satisfying \eqref{RUinitial-theta2}.

\subsubsection{Faedo-Galerkin approximation} Let $X_N = \mathrm{span}
\{ e_1 , \dots , e_N \}$ be the finite-dimensional space corresponding
to the projection in $L^2(\mathbb T^d_{\ell})$ onto the  first $N$
Fourier modes. We consider the system whose unknowns are  
\[
(R_N,U_N) \in C(\mathbb R^+;H^{2s+1}(\mathbb T^d_{\ell})) \times  C(\mathbb R^+;X_N),
\]
and composed by \eqref{RU-drag-reg1} and the following weak formulation of \eqref{RU-drag-reg2}: for any $t \in (0,T)$ and any vector field $\phi \in (X_N)^d$,
\begin{equation}\label{FG}
\begin{aligned}
& \dfrac{\textrm{d}}{\textrm{d}t}\int_{\T^d_\ell} R_NU_N \cdot \phi  
- \frac{1}{\tau^2}  \int_{\T^d_\ell} R_N U_N \otimes U_N : \nabla \phi 
+  \int_{\T^d_\ell} R_N (2y \cdot \phi - \Div \phi)  \\
&\qquad 
+  \frac{r_0}{\tau^2} \int_{\T^d_\ell}   U_N \cdot \phi \, d y
+  \frac{r_1}{\tau^2} \int_{\T^d_\ell} R_N|U_N|^2  U_N \cdot \phi 
+  \frac{\delta_1}{\tau^2}  \int_{\T^d_\ell}  ([\nabla R_N \cdot \nabla] U_N ) \cdot \phi  \\
&\qquad 
+ \frac{\eps^2}{2\tau^2}  \int_{\T^d_\ell}   \left[ 2 \Delta \sqrt R_N \nabla \sqrt R_N \phi + \Delta \sqrt R_N \sqrt R_N \Div \phi \right] 
+ \frac{\nu}{\tau^2}  \int_{\T^d_\ell}  R_N \D U_N : \nabla \phi  \\
&\qquad 
+   \frac{\nu\dot \tau}{\tau} \int_{\T^d_\ell}   R_N \Div \phi 
+  \frac{\delta_2}{\tau^2} \int_{\T^d_\ell}   \Delta U_N \cdot \Delta \phi 
+ \eta_1  \int_{\T^d_\ell}  R_N^{-\alpha} \Div \phi  
\\
& \quad - \frac{\eta_2}{\tau^2} \int_{\T^d_\ell}   R_N \nabla \Delta^{2s+1} R_N \cdot \phi    =0,
\end{aligned}
\end{equation}
where we recall that $r_0,r_1, \eps >0$.
We complement the system with initial conditions:
\begin{equation} \label{FG-ini}
\begin{aligned}
& R_N|_{t=0} = R_0, \\
& \left[\int_{\mathbb T^d_{\ell}} R_NU_N \cdot \phi\right]|_{t=0} = \int_{\mathbb T^d_{\ell}} R_0U_0 \cdot \phi ,\qquad \forall \phi \in (X_N)^d.
\end{aligned}
\end{equation}
We have the following existence result for this approximate system:
\begin{proposition}\label{prop:sol-1}
Given $N \in \mathbb N^*,$ there exists a global solution $(R_N,U_N)$ to \eqref{RU-drag-reg1}-\eqref{FG}-\eqref{FG-ini} that satisfies the conservation of mass \eqref{mass_conservation}
and the energy inequality
\begin{equation}\label{eq:EregN}
\sup_{t \in (0,T)} \Ereg(R_N,U_N)
+ \int_0^T \Dreg(R_N,U_N) \, d t \le C(\Ereg(R_N,U_N)|_{t=0}) ,
\end{equation}
for come constant $C>0$ depending on $\Ereg(R_N,U_N)|_{t=0}$.
\end{proposition}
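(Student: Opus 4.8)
The plan is to build $(R_N,U_N)$ by the Faedo--Galerkin scheme of \cite{VasseurYu}, for each fixed $N$: given a velocity I solve the non-degenerate parabolic continuity equation \eqref{RU-drag-reg1} for the density, insert the result into the finite-dimensional momentum equation \eqref{FG}, solve the latter by a fixed-point argument to obtain a local solution, and finally use the energy identity \eqref{RU-drag-reg-estimation} to continue it globally. No uniformity with respect to $N$ is needed at this stage.

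\textbf{Step 1 (density solver).} Given $U\in C([0,T];X_N)$, which is smooth in $y$ as a trigonometric polynomial, equation \eqref{RU-drag-reg1} is linear in $R$ with smooth coefficients and diffusion $\delta_1/\tau^2>0$. I would appeal to standard parabolic theory to get a unique solution $R=R[U]$, smooth in space-time, with the regularity announced in \eqref{estim-def}. Writing it in non-divergence form $\partial_t R-\tfrac{\delta_1}{\tau^2}\Delta R+\tfrac{1}{\tau^2}U\cdot\nabla R+\tfrac{1}{\tau^2}(\Div U)R=0$, the maximum principle yields $R(t,\cdot)\ge \theta\,\exp\bigl(-\int_0^t\tfrac{1}{\tau^2}\|\Div U(s)\|_{L^\infty}\,\dd s\bigr)>0$ on $[0,T]$ by \eqref{RUinitial-theta2}, and integrating over $\T^d_\ell$ gives the conservation of mass \eqref{mass_conservation}. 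Finally I would check that $U\mapsto R[U]$ is Lipschitz on bounded sets of $C([0,\tau^*];X_N)$, by estimating the parabolic equation solved by the difference of two densities.

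\textbf{Step 2 (momentum fixed point).} For $R$ bounded below, the operator $\mathcal M[R]\colon X_N\to X_N$ defined by $\langle\mathcal M[R]v,\phi\rangle_{L^2}=\int_{\T^d_\ell}R\,v\cdot\phi$ is symmetric positive definite, hence invertible with $\|\mathcal M[R(t)]^{-1}\|$ bounded on $[0,\tau^*]$. Integrating \eqref{FG} in time and using \eqref{FG-ini} recasts the momentum equation as
\[
\mathcal M[R(t)]\,U(t)=\Pi_N\bigl(R_0U_0\bigr)+\int_0^t\mathcal F\bigl[R,U\bigr](s)\,\dd s,
\]
where $\Pi_N$ is the $L^2$-projection onto $X_N$ and $\mathcal F$ gathers the remaining spatial terms of \eqref{FG}. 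Composing with Step~1, the map $U\mapsto\mathcal M[R[U]]^{-1}[\cdots]$ sends a ball of $C([0,\tau^*];X_N)$ into itself and is a contraction for $\tau^*$ small: on the finite-dimensional space $X_N$ all norms are equivalent, $R[U]$ stays in a fixed range $[c,C]$, so every term of $\mathcal F$ (including the drag term $r_1R|U|^2U$ and the cold-pressure term $\eta_1R^{-\alpha}$) is Lipschitz in $U$. Banach's fixed-point theorem then provides a unique local solution $(R_N,U_N)$.

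\textbf{Step 3 (energy estimate and globalization).} Since $R_N$ is smooth and $U_N(t)\in X_N$, I may take $\phi=U_N(t)$ in \eqref{FG} and combine with \eqref{RU-drag-reg1}, turning the formal computation \eqref{RU-drag-reg-estimation} into a rigorous identity in which every regularizing term contributes to $\Dreg$ with the correct sign. The two source terms on the right of \eqref{RU-drag-reg-estimation} are harmless: the first is controlled using $\int_{\T^d_\ell}R_N=\int_{\T^d_\ell}R_0$ and $\int_0^\infty\tau^{-2}<\infty$, the second is absorbed into $\Dreg$ by Cauchy--Schwarz and mass conservation, exactly as in \cite[Remark~2.13]{CCH}; this gives \eqref{eq:EregN} on the existence interval. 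The only genuine difficulty is the passage from local to global existence, since the lower bound on $R_N$ used in Step~2 a priori degenerates with $\|U_N\|$, while the control of $\|U_N\|$ through the energy relies on that very bound. I would break this circularity with Lemma~\ref{lem:lowerbound}: the inequality $\Ereg(R_N,U_N)\le C_0$ controls $\|R_N\|_{H^{2s+1}}$ (via the $\eta_2$ term and mass conservation) and $\|R_N^{-1}\|_{L^\alpha}$ (via the $\eta_1$ term), so for $\alpha>4$, $s>d$ the lemma followed by a Sobolev embedding upgrades these into the pointwise bound $\|1/R_N\|_{L^\infty}\le C$ of \eqref{estim-lowerbdd}, \emph{independently} of the proximity to any blow-up time. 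Combined with $\int_{\T^d_\ell}R_N|U_N|^2\le 2\tau^2C_0$, the bound $R_N\ge c>0$ forces $\|U_N(t)\|_{X_N}$ to remain bounded on every finite interval, so the continuation criterion for the ODE of Step~2 excludes finite-time blow-up and delivers the desired global solution satisfying \eqref{mass_conservation} and \eqref{eq:EregN}.
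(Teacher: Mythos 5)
Your proof is correct and follows essentially the same route as the paper: local existence via the fixed-point scheme of \cite{VasseurYu} (whose details the paper simply delegates to that reference), the energy identity obtained by testing \eqref{FG} with $U_N(t)/\tau^2(t)$ (your choice $\phi=U_N(t)$ followed by the $\tau^{-2}$ weight is trivially equivalent), and control of the right-hand side through mass conservation, $\int_0^\infty (1+\dot\tau^2)\tau^{-2}\,\dd t<\infty$, and absorption of half the dissipation. Your explicit continuation argument—upgrading the energy bounds via Lemma~\ref{lem:lowerbound} to a time-uniform lower bound on $R_N$, which is exactly the mechanism behind \eqref{estim-lowerbdd}—is the step the paper leaves implicit, and you carry it out correctly.
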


\begin{proof}
The local existence is obtained following \cite{VasseurYu}
(see also \cite{Jungel}). The novelties with respect to this previous study are: the linearity of the pressure term, the time factors $\tau, \dot{\tau}$ and the new terms
\[
 \int_{\T^d_\ell} R (2y \cdot \phi - \Div \phi), \qquad   \frac{r_0}{\tau^2} \int_{\T^d_\ell}   U \cdot \phi , \qquad  \frac{\nu\dot \tau}{\tau} \int_{\T^d_\ell}   R \Div \phi .
\]
However, these terms are harmless in the fixed-point approach of \cite[Section 2]{VasseurYu}, for instance.

\medskip

The global existence is then a consequence of the energy estimate that we obtain as follows.
Conservation of mass follows by integrating \eqref{RU-drag-reg1}.
We may then take $\phi = U_N(t)/\tau^2(t)$ in \eqref{FG} 
since it corresponds to writing the $N$ equations obtained by setting $\phi = e_j,$ $j=1,\ldots,N$, and combining them with the coefficients defining $U_N$ in this basis. 
This yields  
\begin{equation}\label{dEdt}
\begin{aligned}
\frac{\dd}{\dd t} \Ereg(R_N,U_N) + \Dreg(R_N,U_N) = \frac{2d \delta_1}{\tau^2} \int_{\T^d_\ell} R_N - \frac{\nu \dot \tau}{\tau^3} \int_{\T^d_\ell} R_N \Div U_N.
\end{aligned}
\end{equation}
We deduce the energy inequality by remarking that the right-hand side of \eqref{dEdt} can be bounded by 
\[
\left( \frac{2d \delta_1}{\tau^2} + C \frac{\nu \dot \tau^2}{\tau^2}
\right) \int_{\T^d_\ell} R_N 
+\frac{\nu}{2\tau^4} \int_{\T^d_\ell} R_N |\D U_N|^2
\le C \frac{(1+\dot \tau^2)}{\tau^2} \int_{\T^d_\ell} R_N  + \frac12 \Dreg(R_N,U_N),
\]
using the conservation of mass together with 
\[
\int_0^\infty \frac{1+\dot \tau^2(t)}{\tau^2(t)} \, \dd t < \infty,
\]
and recalling that $\Ereg$ is nonnegative.
\end{proof}

\subsubsection{Convergence of the approximate solutions}
\label{sec:convergenceapproximate}

We split the proof into three steps: defining limits to the sequence 
of approximate solutions $(R_N,U_N),$ improving the sense in which this sequence converges, passing to the limit in the weak formulation \eqref{FG}. In all the convergences mentioned in the proof, we have to extract subsequences that we do not relabel for conciseness.\\

So,  let $\{ (R_N,U_N) \}_N$ be the sequence of approximate solutions to \eqref{RU-drag-reg1}-\eqref{FG}-\eqref{FG-ini} given by Proposition~\ref{prop:sol-1}. We note that we have initially 
 $R_N(0,\cdot) = R_{0}$ and $R_{N}U_N(0,\cdot) = \mathbb P_{N}[R_0 U_0]$
 where $\mathbb P_N$ stands for the ($L^2(\mathbb
 T^d_{\ell})$)-projection onto $X_N.$ In particular, since by
 assumption $R_0U_0 \in L^2(\mathbb T^{d}_{\ell}),$ we have 
 \begin{equation} \label{eq:controluniform1}
 \Ereg(R_N,U_N)|_{t=0} \leq \Ereg(R_0,U_0).
 \end{equation}

\medskip\noindent
\textit{Step 1.}
From \eqref{eq:controluniform1} and the energy inequality derived in Proposition~\ref{prop:sol-1}, we infer that 
\[
\sup_{t \ge 0} \Ereg(R_N,U_N) + \int_0^{\infty} \Dreg(R_N,U_N)  \leq C( \Ereg(R_0,U_0) ), \quad \forall \, N.  
\]
We obtain then uniform bounds 
on $(R_N,U_N)$ in a series of spaces similar to the ones in
Remark~\ref{rem:regularite-weaksol-1}. We first  extract from this list
that we have uniform bounds with respect to $N$ for:
\[
\begin{aligned}
& \frac{1}{\tau}  \sqrt{\eta_2} R_N \text{ in $L^{\infty}(\mathbb R^+; H^{2s+1}(\mathbb T^{d}_{\ell}))$}, &
& \left(\dfrac{\eta_1}{\alpha+1}\right)^{\frac 1\alpha}\dfrac{1}{R_N} \text{ in $L^{\infty}(\mathbb R^+; L^{\alpha}(\mathbb T^{d}_{\ell}))$},\, \\    
& \frac{1}{\tau} \sqrt{R_N} U_N \text{ in $L^{\infty}(\mathbb R^+; L^{2}(\mathbb T^d_{\ell}))$}. 
\end{aligned}
\] 
Using the first bound, we can extract a subsequence so that $R_N/\tau$ 
converges to some $R/{\tau}$ in this same space (for the weak-$*$ topology). From the last bound,
we obtain that (up to the extraction of a subsequence) $\sqrt{R_N} U_N / \tau $ converges to some $V/\tau$ in $L^{\infty}(\mathbb R^+;L^2(\mathbb T^{d}_{\ell}))-w*.$ 
Restricting to any time interval $(0,T)$ with $T<\infty,$ the second
bound with the first one and Lemma \ref{lem:lowerbound} imply that
$R_N$ is uniformly bounded from below on $(0,T)$ by a constant
$C(\Ereg(R_0,U_0),\eta_1,\eta_2,\theta,T).$ Hence, we have also  
\begin{equation}\label{Rminore}
R \geq C(\Ereg(R_0,U_0),\eta_1,\eta_2,\theta,T) \quad\text{in} \quad (0,T),
\end{equation}
and we may set $U = V/\sqrt{R}.$  
We focus now on the restriction of these limits on  $(0,T).$ 

\medskip\noindent
\textit{Step 2.}
On $(0,T),$ we establish convergences of $R_N$ and $U_N$ in
a stronger sense.

\medskip

To this end, we now extract from the list given by  Remark~\ref{rem:regularite-weaksol-1} uniform bounds for
\[
\begin{aligned}
& \text{  $R_N$  in $L^{\infty}(0,T;H^{2s+1}(\mathbb T^d_{\ell})) \cap L^2(0,T;H^{2s+2}(\mathbb T^d_{\ell})),$}\\
&\text{ $1/R_N$  in $L^{\infty}((0,T) \times \mathbb T^{d}_{\ell}),$}\\
& \text{ $U_N$ in $L^2(0,T;H^2(\mathbb T^d_{\ell})).$}
\end{aligned}
\]
The continuity equation \eqref{RU-drag-reg1} satisfied by $R_N$ implies then that $\partial_t R_N$ is bounded in $L^2(0,T;H^1(\mathbb T^{d}_{\ell})).$ 
Combining classical weak-convergence results and Ascoli-Arzel\`a type arguments entails that:
\begin{equation} \label{eq:convergence-reg-R}
\begin{aligned}
& R_N \to R  \text{ in $C([0,T]; H^{2s}(\mathbb T^d_{\ell})),$} \\
& R_N \rightharpoonup  R  \text{ in $L^2(0,T;H^{2s+2}(\mathbb T^d_{\ell}))-w,$}\\
& R_N \rightharpoonup R \text{ in $H^1(0,T;H^{1}(\mathbb T^d_{\ell}))-w.$}
\end{aligned}
\end{equation}
Given the bound by below on $R_N$ \eqref{Rminore}, we also have that 
$1/R_N$ converges to $1/R$ in $C([0,T] \times \mathbb T^d_{\ell}).$

\smallbreak

Next,  given the uniform bounds for $U_N$ and $R_N,$ and since  $(e_k)_{k\in\mathbb{N}}$
is orthogonal for the $H^2$-scalar product, we have that $R_NU_N$
and $\mathbb P_N[R_NU_N]$ are uniformly bounded  in
$L^2(0,T;H^2(\mathbb T^d_{\ell}))$ too.  
On the other hand, the weak formulation satisfied by the approximation $(R_N,U_N)$
reads:
\[
\begin{aligned}
\partial_t( \mathbb{P}_N[R_N U_N]) 
&= \mathbb P_N\Biggl[  -\frac{1}{\tau^2} \Div (R_N U_N \otimes U_N)
-2 y R_N  - \nabla R_N + \eta_1 \nabla R_N^{-\alpha} +\frac{r_0}{\tau^2} U_N \\
&\quad
+ \frac{r_1}{\tau^2} R_N |U_N|^2 U_N + \frac{\delta_1}{\tau^2}(\nabla R_N \cdot \nabla) U_N
+\frac{\eps^2}{2\tau^2} R_N \nabla \( \frac{\Delta  \sqrt{R_N}}{\sqrt{R_N}}\) \\  
&\quad
+\frac{\nu}{\tau^2} \Div (R_N \D U_N) + \frac{\nu \dot \tau}{\tau} \nabla R_N 
+\frac{\delta_2}{\tau^2} \Delta^2 U_N  + \frac{\eta_2}{\tau^2} R_N \nabla \Delta^{2s+1} R_N \Biggr]\\
& := \mathbb P_N [F_N]
\end{aligned}
\]
Again we note here that $\mathbb P_N$ is orthogonal with respect to
the $H^s$-scalar product, so that
\[
\|\mathbb P_N F_N\|_{H^{-s}(\mathbb T^d_{\ell})} \leq
\|F_N\|_{H^{-s}(\mathbb T^d_{\ell})}  ,
\quad \forall \, s \in \mathbb N.
\]
For $s$ sufficiently large, we may then combine the various uniform estimates satisfied by $(R_N,U_N)$ on 
$(0,T)$ to infer that $\partial_t (\mathbb P_N[R_NU_N])$ is uniformly bounded in {$L^2(0,T;H^{-(2s+2)}(\mathbb T^d_{\ell})).$}  To prove this, the main terms to be discussed are $\Div (R_NU_N \otimes U_N)$ and 
$R_N |U_N|^2 U_N$ which can be handled (since $d\le 3$) via the embedding $H^2(\mathbb T^d_{\ell}) \subset L^{\infty}(\mathbb T^d_{\ell}).$  
To summarize, we know that $\mathbb P_N[R_NU_N]$ is bounded in $L^2(0,T;H^2(\mathbb T^d_{\ell}))$ and 
$\partial_t (\mathbb P_N[R_NU_N])$ is bounded in {$L^2(0,T;H^{-(2s+2)}(\mathbb T^d_{\ell})).$}
Aubin-Lions like arguments imply then that $\mathbb P_N[R_NU_N]$ converges in $L^2(0,T;H^1(\mathbb T^{d}_{\ell}))$. 
Due to the compactness of the embedding $H^{2}(\mathbb T^d_{\ell})
\subset H^{1}(\mathbb T^d_{\ell})$ again,  there exists a sequence
$(\varepsilon_N)_{N}$ converging to $0$ so that 
 \[
 \|\mathbb P_N[R_NU_N] - R_NU_N\|_{L^2(0,T;H^1(\mathbb T^d_{\ell}))}
 \le \varepsilon_N \|R_NU_N\|_{L^2(0,T;H^2(\mathbb T^d_{\ell}))}.
 \]
 Consequently, $(\mathbb P_N[R_NU_N])_N$ and $(R_NU_N)_N$ both converge
  to $RU$ in $L^2(0,T;H^1(\mathbb T^d_{\ell})).$  Moreover, since $(1/R_N)_{N\in \mathbb N}$ is uniformly 
 bounded and $R_N$ converges to $R$ in a sufficiently regular space,
 this also implies  that 
\begin{equation} \label{eq:convergence-reg-U}
U_N  \to U \text{ in $L^2(0,T;H^1(\mathbb T^d_{\ell}))$}.
\end{equation}
To end up this part on the convergence of $U_N$, we note that the uniform estimates satisfied by $(R_N,U_N)$ also entail that $U_N$ is bounded in 
$L^{\infty}(0,T;L^2(\mathbb T^d_{\ell})) \cap L^2(0,T;H^2(\mathbb T^d_{\ell}))$ so that the limit $U$ lies in these spaces. 
 
 \medskip\noindent
\textit{Step 3.}
Given the time-regularity of approximate solutions, $R_N$ and $R_NU_N$ satisfy
\eqref{RU-drag-reg1-weak} for arbitrary $\Psi \in \DD ([0,\infty)
\times \T^d_\ell )$,  and \eqref{RU-drag-reg2-weak} for arbitrary 
$\Phi \in \DD ([0,\infty) ; X_N)$,   respectively.
The two sets of convergence results \eqref{eq:convergence-reg-R}  and \eqref{eq:convergence-reg-U} are then sufficient to pass to the limit in these weak formulations. Again, the main difficulty might be here to pass to the limit in $R_N |U_N|^2 U_N$.
However, we note that $R_N$ converges in the set of continuous
functions while $U_N$  
is bounded in $L^{\infty}_{\rm loc}((0,\infty);L^2(\mathbb T^d_{\ell}))$
and converges in $L^2_{\rm loc}((0,\infty);H^1(\mathbb T^d_{\ell}))$ so
that, by interpolation, it converges in
$L^{4}_{\rm loc}((0,\infty);L^3(\mathbb T^d_{\ell})).$  
At this point, $(R,U)$ satisfies
\eqref{RU-drag-reg1-weak} for arbitrary $\Psi \in \DD ([0,\infty) \times \T^d_\ell )$ and \eqref{RU-drag-reg2-weak} for arbitrary 
$\Phi \in \DD ([0,\infty) ; \bigcup_{N} X_N).$ We note then
that for arbitrary $\Phi \in\DD ([0,\infty) \times \mathbb T^d_{\ell})
,$  $\partial_t \mathbb P_N[\Phi]$ and $\mathbb P_N[\Phi]$
converge to $\partial_t \Phi$  in $C([0,\infty); L^2(\mathbb
T^{d}_{\ell}))$ and $\Phi$ in $L^2(0,\infty;H^{2s+2}(\mathbb
T^d_{\ell}))$, respectively. This is sufficient to extend  
\eqref{RU-drag-reg2-weak} to arbitrary $\Phi \in \DD ([0,\infty) \times \T^d_{\ell}).$ 

As for energy estimate, we note that $(R_N,U_N)$ satisfies
\eqref{eq:EregN} for arbitrary $N$ and the initial data verifies \eqref{eq:controluniform1}. Since $\Ereg(R_N,U_N)$ 
is continuous with respect to topologies for which $R_N,U_N$ converge strongly,
while $\Dreg(R_N,U_N)$ is continuous with respect to topologies for which
$R_N,U_N$ converge weakly, we obtain that $(R,U)$ satisfies
\eqref{RU-drag-reg-estimation.1} in the limit $N \to \infty.$ 
 This concludes the proof of Proposition \ref{prop:sol-approche}.

\begin{remark} \label{rem:timeregularity}
With arguments similar to the  ones in {\em Step 3}
of the above proof, we can extend  the weak form \eqref{RU-drag-reg2-weak} of the momentum equation to any test-function  
$\Phi \in (L^2(0,T ; H^{2s+1}(\mathbb T^d_{\ell}))^{d}$
having compact support and such that $\partial_t \Phi \in (L^2(0,T;L^2(\mathbb T^d_{\ell})))^d.$

\end{remark} 
 
\subsection{Further properties of weak solutions to the regularized problem}

Along with the energy estimate \eqref{RU-drag-reg-estimation.1}, we only
showed that we had a list of regularity properties satisfied by our
weak solutions $(R,U).$ Nevertheless, most of these estimates rely on
the regularization parameters $\eta_1,\eta_2,r_0,r_1,$ etc. In order to  
let these parameters vanish, we need other estimates on these solutions.
This is the motivation of the following lemma:

\begin{lemma}[BD-entropy]
Assume the initial data satisfies \eqref{RUinitial-theta2}.  
Then there exist constants $C_1, C_2, C_3$ with dependencies
mentioned in parentheses, such that, for arbitrary $T>0,$ the global
solution $(R,U)$ to \eqref{RU-drag-reg} constructed in Proposition~\ref{prop:sol-approche}
satisfies 
\begin{equation}\label{estim-entropy}
\begin{aligned}
&\sup_{t \in (0,T)}\EBDreg^+(R,U)(t)
+ \int_0^T \DBDreg(R,U) (t)\, \dd t \\
&\qquad\qquad 
\le C_1(\Ereginit, {\mathcal E}^+_{{\rm BD,reg}\mid t=0} ) 
+  (\delta_1 + \delta_2) C_2 \left(r_0 , r_1, \eta_1,\eta_2, 
  \Ereginit, T\right)
  +C_3(r_0),
\end{aligned}
\end{equation}
where $\EBDreg^+$ is the positive part of the BD-entropy defined by
\[
\begin{aligned}
\EBDreg^+(R,U) 
&= \frac{1}{2\tau^2} \int_{\T^d_\ell} 
\left( R|U + \nu \nabla \log R|^2 + \eps^2 |\nabla \sqrt{R}|^2 - 2 r_0 (\log R)\mathbf 1_{R\leq 1} \right)\\
&\quad
+ \int_{\T^d_\ell} \left( R|y|^2 + R \log R + \frac{\eta_1}{\alpha+1} R^{-\alpha} \right)
+ \frac{\eta_2}{2\tau^2} \int_{\T^d_\ell} |\nabla \Delta^s R|^2,
\end{aligned}
\]
and its associated nonnegative dissipation is given by
\[
\begin{aligned}
\DBDreg(R,U) 
&= \frac{\dot \tau}{\tau^3} \int_{\T^d_\ell} \left( R|U|^2 + \eps^2 |\nabla \sqrt{R}|^2 +\eta_2 |\nabla \Delta^s R_N|^2\right)
+\frac{2r_0 \nu \dot \tau}{\tau^3} \int_{\T^d_\ell} |\log R|  \, {\mathbf 1}_{R<1}\\
&\quad
+ \left( \frac{ \delta_1 \nu^2 }{\tau^4}  + \frac{\nu \eps^2}{\tau^4} 
+ \frac{\delta_1 \eps^2}{2\tau^4} \right) \int_{\T^d_\ell} R|\nabla^2  \log R|^2
+\left( \frac{4 \nu}{\tau^2} + \frac{4\delta_1}{\tau^2} \right) \int_{\T^d_\ell} |\nabla \sqrt{R}|^2 \\
&\quad
+\left( \frac{\eta_1 \nu \alpha}{4\tau^2} + \frac{4\delta_1 \eta_1}{10 \tau^2}   \right)\int_{\T^d_\ell} |\nabla R^{-\frac{\alpha}{2}}|^2 
+ \frac{\nu}{\tau^4} \int_{\T^d_\ell} R |\A U |^2 
+ \frac{(\eta_2 \nu + \delta_1 \eta_2)}{\tau^4} \int_{\T^d_\ell} |\Delta^{s+1}R|^2 \\
&\quad
+ \frac{\delta_2}{\tau^4}   \int_{\T^d_\ell} |\Delta U|^2
+ \frac{r_0}{\tau^4} \int_{\T^d_\ell} |U|^2 
+ \frac{r_1}{\tau^4} \int_{\T^d_\ell} R |U|^4.
\end{aligned}
\]
\end{lemma}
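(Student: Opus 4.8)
My plan is to derive the Bresch--Desjardins identity at the level of the smooth, strictly positive solutions produced by Proposition~\ref{prop:sol-approche}, and only afterwards to discard the terms that do not carry a favourable sign. All the manipulations below are licit because the regularity recorded in Proposition~\ref{prop:sol-approche} and Remark~\ref{rem:regularite-weaksol-1} --- namely $R\in C(\R^+;H^{2s})\cap L^2_{\rm loc}(\R^+;H^{2s+2})$ with $1/R\in L^\infty_{\rm loc}$, $\partial_t R\in L^2_{\rm loc}(\R^+;H^1)$ and $U\in L^2_{\rm loc}(\R^+;H^2)$ --- makes $\nu\nabla\log R=\nu\nabla R/R$ an admissible test field in \eqref{RU-drag-reg2}, in the extended sense of Remark~\ref{rem:timeregularity}, as soon as $s$ is large. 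The organising object is the \emph{effective velocity} $U+\nu\nabla\log R$: I would write $\EBDreg^+$ as $\Ereg$ plus the correction $\frac{\nu}{\tau^2}\int U\cdot\nabla R+\frac{\nu^2}{2\tau^2}\int R|\nabla\log R|^2$ and the drag modification $-\frac{r_0}{\tau^2}\int(\log R)\mathbf 1_{R\le1}$, differentiate each piece in time using \eqref{RU-drag-reg1}--\eqref{RU-drag-reg2}, and add the outcome to the energy identity \eqref{RU-drag-reg-estimation}.

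The core of the computation is the time derivative of the cross term $\frac{\nu}{\tau^2}\int U\cdot\nabla R$, which I would obtain by testing the momentum equation against $\frac{\nu}{\tau^2}\nabla\log R$ and using the continuity equation to evaluate the contribution of $\partial_t\nabla\log R$; differentiating the prefactor $1/\tau^2$ produces the $\dot\tau/\tau^3$ weights visible on the first line of $\DBDreg$. I expect the benign contributions to sort out as follows: the isothermal pressure $\nabla R$ yields $\frac{\nu}{\tau^2}\int|\nabla R|^2/R=\frac{4\nu}{\tau^2}\int|\nabla\sqrt R|^2$; the Korteweg term, through the quantum--pressure identity (cf.\ \eqref{eq_Ktwidentity}), supplies the favourable Hessian dissipation $\frac{\nu\eps^2}{\tau^4}\int R|\nabla^2\log R|^2$; the cold pressure $-\eta_1\nabla R^{-\alpha}$ gives a multiple of $\int|\nabla R^{-\alpha/2}|^2$; and the hyper--regularisation $\eta_2R\nabla\Delta^{2s+1}R$ gives control of $\int|\Delta^{s+1}R|^2$. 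The viscous term $\frac{\nu}{\tau^2}\Div(R\D U)$ is the structural one: after integration by parts it produces $-\frac{\nu^2}{\tau^4}\int R\,\nabla^2\log R:\D U$ which, together with the time derivative of $\frac{\nu^2}{2\tau^2}\int R|\nabla\log R|^2$, reorganises --- this is the Bresch--Desjardins mechanism --- converting the symmetric dissipation $\frac{\nu}{\tau^4}\int R|\D U|^2$ carried by $\Dreg$ into the skew one $\frac{\nu}{\tau^4}\int R|\A U|^2$. The nonnegative $\delta_1$--pieces generated by the regularisations (the $\delta_1\nu^2$, $\delta_1\eps^2$, $4\delta_1$, $\delta_1\eta_1$, $\delta_1\eta_2$ terms) I would keep inside $\DBDreg$, using \eqref{eq:equivJungel} to read the Hessian controls intrinsically in terms of $\nabla^2\sqrt R$ and $\nabla R^{1/4}$.

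The main obstacle will be the linear drag $\frac{r_0}{\tau^2}U$, whose test against $\frac{\nu}{\tau^2}\nabla\log R$ produces the sign--indefinite term $-\frac{\nu r_0}{\tau^4}\int U\cdot\nabla\log R$. Following \cite{LacroixVasseur,VasseurYu}, I would remove the material derivative of $\log R$ via the continuity equation: since $\frac{1}{\tau^2}U\cdot\nabla\log R=-\partial_t\log R-\frac{1}{\tau^2}\Div U+\frac{\delta_1}{\tau^2}\Delta R/R$ and $\int_{\T^d_\ell}\Div U=0$, this term becomes, up to a nonnegative $\delta_1$--contribution, a multiple of $\frac{\dd}{\dd t}\int\log R$. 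Splitting $\log R=(\log R)\mathbf 1_{R>1}-|\log R|\mathbf 1_{R\le1}$, the part on $\{R>1\}$ is dominated by the mass and, after time integration against the weight $\dot\tau/\tau^3$ (integrable since $\int_0^\infty\dot\tau/\tau^3<\infty$), accounts for the constant $C_3(r_0)$; the dangerous part on $\{R\le1\}$ is absorbed into the entropy through the term $-\frac{r_0}{\tau^2}\int(\log R)\mathbf 1_{R\le1}\ge0$ --- which is exactly why only the \emph{positive part} $\EBDreg^+$ can be controlled --- and it releases the nonnegative dissipation $\frac{2r_0\nu\dot\tau}{\tau^3}\int|\log R|\mathbf 1_{R<1}$, the sign being favourable precisely because $\dot\tau\ge0$. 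The nonlinear drag $\frac{r_1}{\tau^2}R|U|^2U$ produces $\frac{\nu r_1}{\tau^4}\int|U|^2U\cdot\nabla R$, which Young's inequality distributes between the dissipation $\frac{r_1}{\tau^4}\int R|U|^4$ and the quartic term $\int|\nabla R^{1/4}|^4$, itself controlled by the available $\int R|\nabla^2\log R|^2$ through \eqref{eq:equivJungel}.

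Finally, I would collect the terms that are neither sign--definite nor absorbable into $\DBDreg$; these all carry a factor $\delta_1$ or $\delta_2$, the most conspicuous being the hyperviscosity $\frac{\delta_2}{\tau^2}\Delta^2U$, which upon testing against $\nu\nabla\log R$ contributes $\frac{\nu\delta_2}{\tau^4}\int\Delta^2U\cdot\nabla\log R$. I would bound them crudely through the \emph{parameter--dependent} a priori estimates of Proposition~\ref{prop:sol-approche} together with Lemma~\ref{lem:lowerbound}, which yields the error $(\delta_1+\delta_2)\,C_2(r_0,r_1,\eta_1,\eta_2,\Ereginit,T)$; this is harmless because it disappears in the subsequent limit $\delta_1,\delta_2\to0$. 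Gathering everything gives a differential inequality whose right--hand side consists of a term of size $\frac{C(1+\dot\tau^2)}{\tau^2}\int R$ (controlled via conservation of mass and $\int_0^\infty(1+\dot\tau^2)/\tau^2<\infty$, exactly as in the proof of Proposition~\ref{prop:sol-1}) plus the above errors and the $C_3(r_0)$ contribution; integrating on $(0,T)$ and invoking the finiteness of $\Ereginit$ and of the initial entropy (guaranteed by \eqref{RUinitial-theta2}), which together furnish $C_1$, then produces \eqref{estim-entropy}. The one genuinely delicate point remains the sign bookkeeping of the drag, which is what forces both the truncated entropy $\EBDreg^+$ and the indicator $\mathbf 1_{R\le1}$.
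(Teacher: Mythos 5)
Your proposal reproduces the paper's own argument in all structural respects: testing the momentum equation with $\nu\nabla\log R/\tau^2$ (licit by Remark~\ref{rem:timeregularity} and the regularity of Proposition~\ref{prop:sol-approche}), evaluating $\partial_t\nabla\log R$ through the differentiated continuity equation, the Bresch--Desjardins cancellation $\int R\,\A U:\nabla^2\log R=0$ (orthogonality of symmetric and skew parts) which converts the dissipation $R|\D U|^2$ into $R|\A U|^2$, the treatment of the linear drag via $\partial_t\log R+\tau^{-2}U\cdot\nabla\log R+\tau^{-2}\Div U=\delta_1\tau^{-2}\Delta R/R$ combined with the split $\log R=(\log R)\mathbf 1_{R>1}-|\log R|\,\mathbf 1_{R\le 1}$, the sign $\dot\tau\ge 0$, and the mass bound producing $C_3(r_0)$, and finally the $(\delta_1+\delta_2)C_2$ errors estimated through the $\eta$-dependent a priori bounds of Proposition~\ref{prop:sol-approche} and Lemma~\ref{lem:lowerbound}. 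Your remark that the $\delta_1$-contribution $-r_0\nu\delta_1\tau^{-4}\int\Delta R/R=-r_0\nu\delta_1\tau^{-4}\int|\nabla R|^2/R^2\le 0$ can simply be discarded is even slightly cleaner than the paper's bound $|I_2|\le\delta_1 C$.

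The one step where you genuinely depart from the paper is the cubic drag term $\frac{r_1\nu}{\tau^4}\int|U|^2U\cdot\nabla R$, and there your route has a real defect. Writing $\nabla R=4R^{3/4}\nabla R^{1/4}$ and using Young with exponents $(4/3,4)$, the coefficient in front of $\int R|U|^4$ can indeed be made $\le r_1/2$ (take the Young parameter of size $\nu^{-1}$, independent of $r_1$), but the residual term then reads $c\,r_1\nu^4\tau^{-4}\int|\nabla R^{1/4}|^4$. The only control of $\int|\nabla R^{1/4}|^4$ that is uniform in $\delta_1,\delta_2,\eta_1,\eta_2,\theta,T,\ell$ is, via \eqref{eq:equivJungel}, the term $\frac{\nu\eps^2}{\tau^4}\int R|\nabla^2\log R|^2$ of the very dissipation $\DBDreg$ you are trying to establish; absorbing there forces the smallness condition $r_1\lesssim\eps^2/\nu^3$, which is absent from the statement. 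The fallbacks are worse: the energy dissipation controls the Hessian only with weight $\delta_1\eps^2$, producing a constant of size $r_1/(\delta_1\eps^2)$ that blows up as $\delta_1\to 0$, while the $\eta_2$-based sup-bounds on $R$ together with \eqref{estim-lowerbdd} would place an $(\eta,\theta,T)$-dependent constant outside the $(\delta_1+\delta_2)$-factor, destroying the uniformity needed when $\eta_1,\eta_2\to 0$ in Section~\ref{sec:drag}. Even the conditional version fails in the paper's own application: when the target is $\eps=0$ one takes $\eps_\ell=r_{1,\ell}=1/\ell$, and $r_{1,\ell}\lesssim\eps_\ell^2$ is violated for large $\ell$ (your constant degenerates like $r_1/\eps^2\sim\ell$). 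The paper instead integrates by parts, $\int|U|^2U\cdot\nabla R=-\int R\,\Div(|U|^2U)$, bounds $|I_1|\le r_1\nu\int_0^T\tau^{-4}\int R|U|^2|\nabla U|$, and applies Young against $R|U|^4$ and $R|\nabla U|^2=R|\D U|^2+R|\A U|^2$: the symmetric part is already controlled by the energy estimate \eqref{RU-drag-reg-estimation.1} and half the skew part is absorbed into $\DBDreg$, yielding constants with no negative power of $\eps$. Replace your Young splitting by this one; everything else in your argument stands.
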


\begin{remark} \label{rem:BDentropie}
Below, we see the positive BD-entropy as the positive part of the
complete BD-entropy:
\[
\begin{aligned}
\EBDreg(R,U) 
&= \frac{1}{2\tau^2} \int_{\T^d_\ell} 
\left( R|U + \nu \nabla \log R|^2 + \eps^2 |\nabla \sqrt{R}|^2 - 2 r_0
  \log R \right)\\ 
&\quad
+ \int_{\T^d_\ell} \left( R|y|^2 + R\log R+ \frac{\eta_1}{\alpha+1} R^{-\alpha} \right)
+ \frac{\eta_2}{2\tau^2} \int_{\T^d_\ell} |\nabla \Delta^s R|^2, 
\end{aligned}
\]
and we note that we have then
\begin{align*}
  \EBDreg^+ = \EBDreg-\EBDreg^-,\quad \EBDreg^-= -\frac{r_0}{\tau^2} \int_{\T^d_\ell}  
 \log R \, {\mathbf 1}_{R>1}. 
\end{align*}

\end{remark}

\begin{proof}
We consider in this proof $(R,U)$ a weak solution to \eqref{RU-drag-reg} constructed 
in Proposition~\ref{prop:sol-approche}. 
We have
\[
\begin{aligned}
& \nabla R \in H^{1}_{\mathrm{loc}}(\mathbb R^+;L^2(\mathbb T^d_{\ell})) \cap L^{\infty}_{\mathrm{loc}}(\mathbb R^+;H^{2s-1}(\mathbb T^d_{\ell})) \cap L^2_{\mathrm{loc}}(\mathbb R^+ ; H^{2s+1}(\mathbb T^d_{\ell})),
\\
& 1/R \in H^{1}_{\mathrm{loc}}(\mathbb R^+;L^2(\mathbb T^d_{\ell}))
\cap L^{\infty}_{\mathrm{loc}}(\mathbb R^+ ; H^{2s}(\mathbb T^d_{\ell}))
\cap L^2_{\mathrm{loc}}(\mathbb R^+ ; H^{2s+2}(\mathbb T^d_{\ell})).
\end{aligned}
\]
For $s$ sufficiently large, we obtain  that
$\Phi = (\nu \nabla \log R) /\tau^2 $  satisfies:
\[ 
\Phi \in (L^2_{\mathrm{loc}}(\mathbb R^+; H^{2s+1}(\mathbb T^d_{\ell}))^{d},
\quad 
\partial_t \Phi \in L^2_{\mathrm{loc}}(\mathbb R^+;L^2(\mathbb T^d_{\ell})))^d.
\]
Hence, for arbitrary $\chi \in \mathcal D(0,\infty),$ we can take $\Phi = (\nu \nabla \log R)\chi /\tau^2 $ 
as a test function in the weak formulation of the momentum equation \eqref{RU-drag-reg2-weak}.   
Combining with a standard regularity estimate for \eqref{RU-drag-reg1},
we obtain that, in $\DD'((0,T)),$ there holds:
\begin{equation}\label{identite1}
\begin{aligned} 
& \dfrac{\textrm{d}}{\textrm{d}t} \left[\dfrac{\nu}{\tau^2} \int_{\mathbb T^d_{\ell}} RU \cdot \nabla \log R  \right]
+ \dfrac{2 \nu \dot{\tau}}{\tau^3} \int_{\mathbb T^d_{\ell}} RU \cdot \nabla \log R   \\
& \quad 
+ \dfrac{\eps^2\nu }{\tau^4} \int_{\mathbb T^d_{\ell}} R |\nabla^2\log(R)|^2  
+ \left( \dfrac{\nu}{\tau^2} - \dfrac{\nu^2 \dot{\tau}}{\tau^3} \right) \int_{\mathbb T^d_{\ell}} 4 |\nabla \sqrt{R}|^2  \\
& \quad 
+ \dfrac{4 \eta_1 \nu }{\alpha}  \int_{\mathbb T^d_{\ell}} \left| \nabla \sqrt{R^{-\alpha}}\right|^2 
+ \dfrac{\eta_2 \nu}{\tau^4}   \int_{\mathbb T^d_{\ell}} |\Delta^{s+1} R|^2 \\
& \qquad 
  = \dfrac{2d\nu}{\tau^2} \int_{\mathbb T^d_{\ell}} R 
- \dfrac{r_0 \nu}{\tau^4} \int_{\mathbb T^d_{\ell}} U \cdot \nabla \log R - \dfrac{r_1 \nu}{\tau^4} \int_{\mathbb T^d_{\ell}} |U|^2 U \cdot \nabla R \\
&  \qquad \qquad 
-  \dfrac{\nu^2}{\tau^4} \int_{\mathbb T^d_{\ell}} R \mathbb D U : \nabla^2 \log R  \\
& \qquad \qquad
- \dfrac{\delta_1 \nu}{ \tau^4}\int_{\mathbb T^{d}_{\ell}}\nabla U : \nabla R \otimes \nabla \log R  
- \dfrac{\delta_2\nu}{\tau^4}  \int_{\mathbb T^d_{\ell}} \Delta U \cdot \nabla \Delta \log R  \\
& \qquad \qquad 
- \dfrac{\delta_1 \nu}{ \tau^4}\int_{\mathbb T^{d}_{\ell}} \dfrac{\Delta R}{R} \Div (RU) 
+ \dfrac{\nu}{\tau^4} \int_{\mathbb T^d_{\ell}} R \nabla U:\nabla^{\top} U   .
\end{aligned}  
\end{equation}
The proof of this identity is mostly technical. More details are provided in Appendix \ref{sec:identite1}.
On the other hand, differentiating the continuity equation \eqref{RU-drag-reg1} we obtain:
\[
\partial_t ( R \nabla \log R  ) +  \dfrac{1}{\tau^2} \Div (R \nabla \log R \otimes U ) +  \dfrac{1}{\tau^2}  \Div(R \nabla^{\top} U)  =  \dfrac{\delta_1}{\tau^2} \Delta \nabla R.
\]
This identity holds in $L^2_{\mathrm{loc}}(\mathbb R^+ ; L^2(\mathbb T^d_{\ell}))$ so, we can multiply it with a truncation of $\nabla \log R/\tau^2.$
This leads to the energy estimate:
\begin{equation} \label{identite2.0}
\begin{aligned}
& \frac{\dd}{\dd t} \left[ \frac{1}{2 \tau^2} R |\nabla \log R|^2  \right]  + \dfrac{\dot{\tau}}{\tau^3} \int_{\mathbb T^d_{\ell}} R |\nabla \log R|^2 
+\dfrac{\delta_1}{2 \tau^4}  \int_{\mathbb T^d_{\ell}}  \Delta R |\nabla \log R|^2  \\
& \qquad \quad =  \dfrac{1}{\tau^4} \int_{\mathbb T^d_{\ell}} R \nabla U : \nabla^{2} \log R +  \dfrac{\delta_1}{\tau^4} \int_{\mathbb T^d_{\ell}} \Delta \nabla R \cdot \nabla \log R.
\end{aligned}
\end{equation}
In this last identity, we note that:
\begin{align*}
\int_{\mathbb T^d_{\ell}} \Delta \nabla R \cdot \nabla \log R&  = - \int_{\mathbb T^d_{\ell}} \nabla^2 R : \nabla^2 \log R = -\int_{\mathbb T^d_\ell} \nabla ( R \nabla \log R) : \nabla \log R, \\
& = - \dfrac{1}{2}\int_{\mathbb T^d_{\ell}} \nabla R \cdot \nabla |\nabla \log R|^2  - \int_{\mathbb T^d_\ell} R |\nabla^2 \log R|^2 \\
& = \dfrac{1}{2} \int_{\mathbb T^d_{\ell}} \Delta R  |\nabla \log R|^2 -  \int_{\mathbb T^d_{\ell}} R |\nabla^2 \log R|^2 .
\end{align*}
Consequently, we rewrite the previous energy identity \eqref{identite2.0} as:
\begin{equation} \label{identite2}
\begin{aligned}
& \frac{\dd}{\dd t} \left[ \frac{1}{2 \tau^2} R |\nabla \log R|^2  \right]  + \dfrac{\dot{\tau}}{\tau^3} \int_{\mathbb T^d_{\ell}} 4 |\nabla \sqrt{R}|^2 
+  \dfrac{\delta_1}{\tau^4} \int_{\mathbb T^d_\ell} R |\nabla^2 \log R|^2   \\
& \qquad \quad =  \dfrac{1}{\tau^4} \int_{\mathbb T^d_{\ell}} R \nabla U : \nabla^{2} \log R .
\end{aligned}
\end{equation}
At this point, we combine \eqref{identite1}$+ \nu^2$\eqref{identite2}, which yields
\[
\begin{aligned}
&\frac{\dd}{\dd t} \left\{ \frac{1}{\tau^2} \int_{\mathbb T^d_{\ell}}  \left( \nu RU \cdot \nabla \log R 
+ \frac{\nu^2}{2} R |\nabla \log R|^2\right)   \right\} + \frac{2\nu \dot \tau}{\tau^3}\int_{\mathbb T^d_{\ell}} RU \cdot \nabla \log R  \\
&\qquad
+  \frac{4 \nu}{\tau^2} \int_{\mathbb T^d_{\ell}} |\nabla \sqrt{R}|^2  + 
\left( \frac{ \delta_1 \nu^2 }{\tau^4} + \frac{\eps^2
    \nu}{\tau^4}\right)
\int_{\mathbb T^d_{\ell}} R |\nabla^2 \log{R}|^2
\\
&\qquad
+ \frac{4 \eta_1 \nu }{\alpha\tau^2}\int_{\mathbb T^d_{\ell}} |\nabla R^{-\frac{\alpha}{2}}|^2 
+ \frac{\eta_2 \nu}{\tau^4} \int_{\mathbb T^d_{\ell}} |\Delta^{s+1}R|^2 \\
&\qquad
=  \dfrac{2d\nu}{\tau^2} \int_{\mathbb T^d_{\ell}} R 
- \dfrac{r_0 \nu}{\tau^4} \int_{\mathbb T^d_{\ell}} U \cdot \nabla \log R - \dfrac{r_1 \nu}{\tau^4} \int_{\mathbb T^d_{\ell}} |U|^2 U \cdot \nabla R \\
&  \qquad \qquad 
-  \dfrac{\nu^2}{\tau^4} \int_{\mathbb T^d_{\ell}} R \mathbb D U : \nabla^2 \log R + \dfrac{\nu^2}{\tau^4} \int_{\mathbb T^d_{\ell}} R \nabla U \cdot \nabla^2 \log{R} \\
& \qquad \qquad
- \dfrac{\delta_1 \nu}{ \tau^4}\int_{\mathbb T^{d}_{\ell}}\nabla U : \nabla R \otimes \nabla \log R  
- \dfrac{\delta_2\nu}{\tau^4}  \int_{\mathbb T^d_{\ell}} \Delta U \cdot \nabla \Delta \log R  \\
& \qquad \qquad 
- \dfrac{\delta_1 \nu}{ \tau^4}\int_{\mathbb T^{d}_{\ell}} \dfrac{\Delta R}{R} \Div (RU) 
+ \dfrac{\nu}{\tau^4} \int_{\mathbb T^d_{\ell}} R \nabla U:\nabla^{\top} U   .
\end{aligned}
\]
Introducing $\mathbb A U = \frac12 (\nabla U - \nabla^{\top} U)$ the skew-symmetric part of $\nabla U,$ the second line of the right-hand side also reads
\begin{align*}
-  \dfrac{\nu^2}{\tau^4} \int_{\mathbb T^d_{\ell}} R \mathbb D U : \nabla^2 \log R + \dfrac{\nu^2}{\tau^4} \int_{\mathbb T^d_{\ell}} R \nabla U \cdot \nabla^2 \log{R} 
 =  \dfrac{\nu^2}{\tau^4} \int_{\mathbb T^d_{\ell}} R \mathbb A U : \nabla^2 \log R 
= 0,
\end{align*}
since skew-symmetric and symmetric matrices are orthogonal for the matrix contraction.
Remark also that from the continuity equation \eqref{RU-drag-reg1} we get
\[
\partial_t (\log R) + \frac{1}{\tau^2} \nabla \log R \cdot U + \frac{1}{\tau^2} \Div U = \frac{\delta_1}{\tau^2} \frac{\Delta R}{R},
\]
whence
\[
-\frac{r_0 \nu}{\tau^4} \int_{\mathbb T^d_{\ell}} U \cdot \nabla \log R = \frac{\dd}{\dd t} \left[ \frac{r_0 \nu}{\tau^2} \int_{\mathbb T^d_{\ell}} \log R \right] + \dfrac{2 r_0 \nu \dot{\tau}}{\tau^3}
\int_{\mathbb T^d_{\ell}} \log R 
- \frac{r_0 \nu \delta_1 }{\tau^4} \int_{\mathbb T^d_{\ell}} \frac{\Delta R}{R}.
\]
We finally obtain the identity:
\begin{equation} \label{identite3}
\begin{aligned}
&\frac{\dd}{\dd t} \left\{ \frac{1}{\tau^2} \int_{\mathbb T^d_{\ell}}  \left( \nu RU \cdot \nabla \log R 
+ \frac{\nu^2}{2} R |\nabla \log R|^2 - 2 r_0 \nu \log R \right)   \right\}  \\
& \qquad + \frac{2\nu \dot \tau}{\tau^3}\int_{\mathbb T^d_{\ell}}\left( RU \cdot \nabla \log R - r_0 \log R\right)  \\
&\qquad
+  \frac{4 \nu}{\tau^2} \int_{\mathbb T^d_{\ell}} |\nabla \sqrt{R}|^2  + 
\left( \frac{ \delta_1 \nu^2 }{\tau^4} + \frac{\eps^2 \nu}{\tau^4}\right) \int_{\mathbb T^d_{\ell}} R |\nabla^2 \log{R}|^2
\\
&\qquad
+ \frac{4 \eta_1 \nu }{\alpha\tau^2}\int_{\mathbb T^d_{\ell}} |\nabla R^{-\frac{\alpha}{2}}|^2 
+ \frac{\eta_2 \nu}{\tau^4} \int_{\mathbb T^d_{\ell}} |\Delta^{s+1}R|^2 \\
&\qquad
=  \dfrac{2d\nu}{\tau^2} \int_{\mathbb T^d_{\ell}} R 
- \dfrac{r_0 \nu \delta_1}{\tau^4} \int_{\mathbb T^d_{\ell}} \dfrac{\Delta R}{R}- \dfrac{r_1 \nu}{\tau^4} \int_{\mathbb T^d_{\ell}}  |U|^2 U \cdot \nabla R \\
& \qquad \qquad
- \dfrac{\delta_1 \nu}{ \tau^4}\int_{\mathbb T^{d}_{\ell}}\nabla U : \nabla R \otimes \nabla \log R  
- \dfrac{\delta_2\nu}{\tau^4}  \int_{\mathbb T^d_{\ell}} \Delta U \cdot \nabla \Delta \log R  \\
& \qquad \qquad 
- \dfrac{\delta_1 \nu}{ \tau^4}\int_{\mathbb T^{d}_{\ell}} \dfrac{\Delta R}{R} \Div (RU) 
+ \dfrac{\nu}{\tau^4} \int_{\mathbb T^d_{\ell}} R \nabla U:\nabla^{\top} U   .
\end{aligned}
\end{equation}
We now integrate this identity with respect to time and combine with \eqref{RU-drag-reg-estimation}, observing that 
\[
\int_{\mathbb T^d_{\ell}} R |\mathbb D U|^2 - \int_{\mathbb T^d_{\ell}} R \nabla U:\nabla^{\top} U  = \int_{\mathbb T^d_{\ell}}R |\mathbb A U|^2.
\]
Thus, we obtain (with the notations of  Remark \ref{rem:BDentropie}) that,
for almost all $T \geq 0$,
\[
\begin{aligned}
& \EBDreg(R,U)(T) +\int_0^T \frac{\dot \tau}{\tau^3} \int_{\mathbb T^d_{\ell}} \left( R|U|^2 + \eps^2 |\nabla \sqrt{R}|^2 +\eta_2 |\nabla \Delta^s R|^2\right)\\
&\qquad
+ 2r_0 \nu  \int_0^T \frac{\dot \tau}{\tau^3} \int_{\mathbb T^d_{\ell}} |\log R|  \, {\mathbf 1}_{R<1}
+ \left( \delta_1 \nu^2 + \nu \eps^2  
+ \frac{\delta_1 \eps^2}{2} \right) \int_0^T \frac{1}{\tau^4} \int_{\mathbb T^d_{\ell}} R|\nabla^2  \log R|^2 \\
& \qquad 
+\left( {\nu} + {\delta_1} \right) \int_0^T\frac{4}{\tau^2}\int_{\mathbb T^d_{\ell}} |\nabla \sqrt{R}|^2
+ ( \nu + \delta_1 ) \int_0^T \dfrac{4 \eta_1 }{\tau^2 \alpha}\int_{\mathbb T^d_{\ell}} |\nabla R^{-\frac{\alpha}{2}}|^2 \\
&\qquad
+ \int_0^T \frac{\nu}{\tau^4} \int_{\mathbb T^d_{\ell}} R |\A U |^2 
+ \int_0^T \frac{(\eta_2 \nu + \delta_1 \eta_2)}{\tau^4}  \int_{\mathbb T^d_{\ell}} |\Delta^{s+1}R|^2 \\
&\qquad
+ \int_0^T \frac{\delta_2}{\tau^4}   \int_{\mathbb T^d_{\ell}} |\Delta U|^2
+ \int_0^T \frac{r_0}{\tau^4} \int_{\mathbb T^d_{\ell}}|U|^2 
+ \int_0^T \frac{r_1}{\tau^4} \int_{\mathbb T^d_{\ell}} R |U|^4
\\
&\quad
\le
-r_1 \nu\int_0^T \dfrac{1}{\tau^4} \int_{\mathbb T^d_{\ell}}  |U|^2 U \cdot \nabla R  
-r_0 \nu \delta_1 \int_0^T \dfrac{1}{\tau^4} \int_{\mathbb T^d_{\ell}} \dfrac{\Delta R}{R} 
+2r_0 \nu \int_0^T \frac{ \dot \tau}{\tau^3} \int_{\mathbb T^d_{\ell}} \log R \, {\mathbf 1}_{R \ge 1} \\
& \qquad \qquad
- \delta_1 \nu\int_0^T\dfrac{1}{ \tau^4}\int_{\mathbb T^{d}_{\ell}}\nabla U : \nabla R \otimes \nabla \log R
-\delta_1 \nu \int_0^T \dfrac{1}{ \tau^4}\int_{\mathbb T^{d}_{\ell}} \dfrac{\Delta R}{R} \Div (RU) \\
& \qquad \qquad
- \delta_2\nu \int_0^T \dfrac{1}{\tau^4}  \int_{\mathbb T^d_{\ell}} \Delta U \cdot \nabla \Delta \log R  
+ 2d (\delta_1+\nu)\int_0^T \frac{1}{\tau^2}\int_{\mathbb T^d_{\ell}} R + \nu  \int_0^T \frac{\dot \tau}{\tau^3} \int_{\mathbb T^d_{\ell}} R \Div U  \\
& \qquad \qquad + \EBDreg(R_0,U_0).
\end{aligned}
\]
We denote by $I_1,\ldots,I_8$ the integrals on the right-hand side of this inequality so that we have
$$
\EBDreg(R,U)(T) + \int_0^T \DBDreg(R,U)(t) \, \dd t \le \EBDreg(R_0,U_0) + \sum_{k=1}^8 I_k,
$$ 
and we estimate each of them separately. 
In the sequel, we denote by $K$ and $C$  constants (that may change from line to line). 
The constant $K$ depends only on the parameters of the target system
(namely $\nu,\varepsilon$) and the
initial energy $\Ereg(R_0,U_0)$, while the constant $C$ may depend also on
$T,$ the parameters $\eps, \nu, r_0,r_1,\eta_1,\eta_2, $ and the
initial energy $\Ereg(R_0,U_0).$ But none of them depends on
$(\delta_1,\delta_2).$  We remark that the functions $\frac{1}{\tau^2}$, $\frac{\dot\tau^2}{\tau^2}$, $\frac{1}{\tau^3}$ and $\frac{\dot \tau}{\tau^3}$ are integrable in time over $\R_+$, which we shall use below.

\medskip

For the term $I_1$, integrating by parts, applying Young inequality -- and referring again to \eqref{RU-drag-reg-estimation.1} -- yields:
\[
\begin{aligned}
|I_1| 
&\le  r_1 \nu \int_0^T  \frac{1}{\tau^4} \int_{\mathbb T^d_{\ell}} R |U|^2 |\nabla U|, \\
& 
\le K \left[ \int_0^T \frac{r_1}{\tau^4} \int_{\mathbb T^d_{\ell}} R|U|^4 
+ \int_0^T \frac{\nu}{\tau^4}\int_{\mathbb T^d_{\ell}} {R}| \D U |^2 \right]
+ \frac{1}{2}\int_0^{T} \frac{\nu}{\tau^4} \int_{\mathbb T^d_{\ell}}  {R}| \A U |^2,  \\
&\le \dfrac{1}{2}\int_0^{T} \frac{\nu}{\tau^4} \int_{\mathbb T^d_{\ell}}  {R} |\A U |^2  + K ,
\end{aligned}
\]
and we observe that the first term can be absorbed by the dissipation $\DBDreg$.

For the term $I_2$, since $\alpha >2$ and $s > 2,$ there holds thanks to \eqref{RU-drag-reg-estimation.1}:
\begin{align*}
|I_2| 
& \le {r_0 \nu \delta_1 }\int_0^T \frac{1}{\tau^4} \| \Delta R \|_{L^2} \, \| R^{-1} \|_{L^{2}}
\le \delta_1 K \sup_{(0,T)} \|\Delta R/\tau\|_{L^2}  \sup_{(0,T)} \|R^{-\alpha}\|^{1/2}_{L^1} \int_0^{T} \dfrac{1}{\tau^3}
\le \delta_1  C .
\end{align*}

For the term $I_3$, we have:
\[
I_3 
\le2r_0 \nu\int_0^T \frac{ \dot \tau}{\tau^3} \int_{\mathbb T^d_{\ell}} \log R {\mathbf 1}_{R\ge 1} 
\le r_0 K\int_0^T \frac{\dot \tau}{\tau^3} \int_{\mathbb T^d_{\ell}}  R 
\le r_0 K .
\]

For the term $I_4,$ H\"older inequality in space and
Cauchy-Schwarz inequality in time yield
\begin{align*}
|I_4| & = \delta_1 \nu \left|\int_{0}^T \dfrac{1}{\tau^4} \int_{\mathbb T^d_{\ell}} \sqrt{R}\D U : \dfrac{\nabla R \otimes \nabla R}{R^{3/2}} \right| \\
& \le  \delta_1 \sqrt{\nu T} \left[  \int_0^{T} \dfrac{\nu}{\tau^{4}} \int_{\mathbb T^d_{\ell}} {R}|\D U|^2  \right]^{\frac 12}  \sup_{(0,T)} \|\nabla R/\tau\|_{L^{\infty}}^2 
\sup_{(0,T)} \left[ \int_{\mathbb T^d_{\ell}} \frac{1}{R^3}\right]^{\frac 12}.
\end{align*}
Using Sobolev embedding and \eqref{RU-drag-reg-estimation.1}, we
obtain  that, since $s > d/2$:
\[
\sup_{(0,T)} \|\nabla R/\tau\|^2_{L^{\infty}} 
\le K \sup_{(0,T)} \|\nabla \Delta^s R/\tau\|^2_{L^2} \le C,
\] 
and then $|I_4| \leq \delta_1 C.$ 
%
%

For the term $I_5$, we split $I_5 = I_5^{a} + I_5^b$ where:
\[
I_5^a= \delta_1 \nu \int_{0}^T \dfrac{1}{\tau^4} \int_{\mathbb T^d_{\ell}} \dfrac{\Delta R}{\sqrt{R}} \sqrt{R} \Div U,
\quad 
I_5^b =2 \delta_1 \nu \int_{0}^T \dfrac{1}{\tau^4} \int_{\mathbb T^d_{\ell}} \sqrt{R} U \cdot \nabla \sqrt{R}\, \Delta R.  
\]
As previously, we note in these inequalities that thanks to Sobolev
embeddings and \eqref{RU-drag-reg-estimation.1}, there holds:
\[
\sup_{(0,T)} \|\Delta R/\tau\|_{L^{\infty}} + \sup_{(0,T)} \|\nabla R/\tau\|_{L^{\infty}}  + \sup_{(0,T)} \int_{\mathbb T^{d}_{\ell}} \dfrac{1}{R} \leq C.
\]
Consequently, we have the following controls
\[
|I_5^a| \le  \delta_1 \left( \int_{0}^T\frac{\nu}{\tau^4} \int_{\mathbb T^d_{\ell}} R |\D U|^2\right)^{\frac 12} 
\left( \int_0^T \frac{\nu}{\tau^2} \right)^{\frac 12} \sup_{(0,T)} \|\Delta R/\tau\|_{L^{\infty}} \sup_{(0,T)} \left( \int_{\mathbb T^{d}_{\ell}} \dfrac{1}{R}\right)^{\frac 12} \leq \delta_1 C,
\]
and
\[
|I_5^b| \le  \delta_1 \left[ \int_{0}^{T} \dfrac{\nu}{\tau^2} \right] \sup_{(0,T)} \|\sqrt{R}U/\tau\|_{L^2} \sup_{(0,T)} \|\nabla \sqrt{R}/\tau\|_{L^2} \sup_{(0,T)}\|\Delta R/\tau\|_{L^{\infty}} \leq \delta_1 C. 
\]
%

For the term $I_6$ we have:
\[
|I_6| 
\le \delta_2 \int_0^{T}\frac{1}{2\tau^4} \int_{\mathbb T^d_{\ell}} | \Delta U |^2   
+  \delta_2 \nu^2 \int_0^{T}\frac{1}{2\tau^4}  \int_{\mathbb T^d_{\ell}} | \nabla \Delta \log R |^2, 
\]
and we remark that
\[
\nabla \Delta \log R = \frac{\nabla \Delta R}{R} - \frac{\Delta R \nabla R}{R^2}
- 2\frac{\nabla^2 R \nabla R}{R^2}
+2\frac{|\nabla R|^2 \nabla R}{R^3},
\]
so that, using Sobolev embedding and \eqref{RU-drag-reg-estimation.1}
we obtain:
\[
\sup_{(0,T)} \| \nabla \Delta \log R \|_{L^2} \leq 
K \sup_{(0,T)} \left( 1+\| \nabla \Delta^s R \|_{L^2} \right)^3 
\sup_{(0,T)}\left(1+ \int_{\mathbb T^d_{\ell}} \frac{1}{R^3}\right)
\le C,
\]
which implies
\[
|I_6| 
\le \int_0^{T}\frac{\delta_2}{2\tau^4} \int_{\mathbb T^d_{\ell}} | \Delta U |^2   +  \delta_2 C ,
\]
and we observe that the first term can be absorbed by the dissipation $\DBDreg$.

For the last two terms, we have:
\[
I_7+I_8 \le (2d (1+\nu)+\nu) \int_0^{\infty} \frac{1+ \dot \tau^2}{\tau^2}  \int_{\mathbb T^d_{\ell}} R  
+ \int_0^{\infty} \frac{\nu}{\tau^4} \int_{\mathbb T^d_{\ell}} {R} |\D U |^2 , 
\]
where we have used Cauchy-Schwarz and Young inequalities for $I_8$.
Then, thanks to \eqref{RU-drag-reg-estimation.1}, we get
\[
I_7+I_8  \leq K.
\]

Gathering the previous estimates yields
\[
\EBDreg(R,U)(T) +\frac12 \int_0^T \DBDreg(R,U) \, \dd t \le K +r_0K+ (\delta_1 + \delta_2) C + \EBDreg^+(R_0,U_0).
\]
To conclude, we only need to control the negative part of the BD-entropy, which is done by
\begin{align*}
{\mathcal E}^{-}_{\mathrm{BD}}(R,U)(T) & := \frac{r_0}{ \tau^2(T)} \int |\log R(T)| \, {\mathbf 1}_{R(T)\ge1} \le K r_0 \int_{\T^d_\ell} R  \le r_0 K.
\end{align*}
This concludes the proof.
\end{proof}

\section{Global weak solutions to isothermal fluids with drag
  forces}\label{sec:drag}

In this section we construct global weak solutions to the isothermal fluid system with
drag forces, that is system \eqref{RU-drag} with $r_0, r_1 >0$. We consider solutions on the torus $\T^d_\ell$ by passing to the limit in the regularizing parameters $\delta_1,\delta_2,\eta_1 , \eta_2 \to 0$ from solutions to the regularized system \eqref{RU-drag-reg}.
Let $r_0, r_1 >0$, we define the energy and its corresponding dissipation for the system \eqref{RU-drag}:
\begin{align*}
\Edrag(R,U) 
&= \frac{1}{2\tau^2} \int_{\mathbb T^d_{\ell}} \left( R|U|^2 + \eps^2 |\nabla \sqrt{R}|^2 \right)
+ \int_{\mathbb T^d_{\ell}} \left( R|y|^2 + R\log R \right),
\\
\Ddrag(R,U) 
&= \frac{\dot \tau}{\tau^3} \int_{\mathbb T^d_{\ell}}\left( R|U|^2 + \eps^2 |\nabla \sqrt{R}|^2\right)
+ \frac{\nu}{\tau^4} \int_{\mathbb T^d_{\ell}} R |\D U|^2 
+ \frac{r_0}{\tau^4} \int_{\mathbb T^d_{\ell}} |U|^2
+ \frac{r_1}{\tau^4} \int_{\mathbb T^d_{\ell}} R |U|^4,
\end{align*}
as well as the BD-entropy and its corresponding flux
\begin{align*}
\EBDdrag^+ (R,U)
&= \frac{1}{2\tau^2} \int_{\mathbb T^d_{\ell}} 
\left( R|U + \nu \nabla \log R|^2 + \eps^2 |\nabla \sqrt{R}|^2 - 2 r_0 \log R\mathbf{1}_{R<1} \right)\\
& \quad
+ \int_{\mathbb T^d_{\ell}} \( R|y|^2 + R\log R \),
\\
\DBDdrag(R,U) 
&= \frac{\dot \tau}{\tau^3} \int_{\mathbb T^d_{\ell}} \left( R|U|^2 + \eps^2 |\nabla \sqrt{R}|^2 \right)
+\frac{2r_0 \nu \dot \tau}{\tau^3} \int_{\mathbb T^d_{\ell}} |\log R|  \, {\mathbf 1}_{R<1}\\
& \quad  + \frac{\nu \eps^2}{\tau^4} \int_{\mathbb T^d_{\ell}} R|\nabla^2  \log R|^2
+ \frac{4 \nu}{\tau^2}  \int_{\mathbb T^d_{\ell}} |\nabla \sqrt{R}|^2 
+\frac{\nu}{\tau^4} \int_{\mathbb T^d_{\ell}} R |\A U |^2 \\
&\quad
+ \frac{r_0}{\tau^4} \int_{\mathbb T^d_{\ell}} |U|^2 
+ \frac{r_1}{\tau^4} \int_{\mathbb T^d_{\ell}} R |U|^4.
\end{align*}

We note that these quantities correspond to what remains of the
energy and entropy defined in Section~\ref{sec:reg} when the regularizing
parameters $\delta_1,\delta_2$ and $\eta_1,\eta_2$ are sent to $0$.

It is then natural to build-up a definition of global solution to the isothermal system with drag forces \eqref{RU-drag} with $r_0,r_1 >0$ 
based on the only information that $\Edrag$ and $\EBDdrag^+$ are $L^{\infty}(\mathbb R^+)$
while $\Ddrag$ and $\DBDdrag$ are $L^{1}(\mathbb R^+).$ 
For this, it turns out that it is more suitable to interpret the density $R$ as the square of $\sqrt{R}.$ Indeed, combining $\Edrag$ and $\EBDdrag^+$ yields a bound on $ R |\nabla \log(R)|^2 = 4|\nabla  \sqrt{R}|^2.$ Correspondingly, we
write \eqref{RU-drag1} in terms of $\sqrt{R}$: 
\begin{equation} \label{RU-drag1-bis}
\partial_t \sqrt{R} + \dfrac{1}{\tau^2} \Div (\sqrt{R} U) =  \dfrac{1}{\tau^2}  \sqrt{R} \Div U,
\end{equation}
while in \eqref{RU-drag2} we only rewrite the Korteweg term 
applying the identity (see \cite{LacroixVasseur}):
\[
R \nabla \left(\dfrac{\Delta \sqrt{R}}{\sqrt{R}} \right) = \Div \left(
\sqrt{R} \nabla^2 \sqrt{R} - \nabla \sqrt{R} \otimes \nabla \sqrt{R}\right), 
\]
so that we obtain:
\begin{equation} \label{RU-drag2-bis}
\begin{aligned}
& \partial_t (R U) +\frac{1}{\tau^2}\Div ( \sqrt{R} U \otimes \sqrt{R}U) +2 y R  + \nabla R  
+ \frac{r_0}{\tau^2} U + \frac{r_1}{\tau^2} R |U|^2 U   \\
&\qquad \quad
=\frac{\eps^2}{2\tau^2} \Div\( \sqrt{R} \nabla^2 \sqrt{R} - \nabla \sqrt{R} \otimes \nabla \sqrt{R} \)   
+\frac{\nu}{\tau^2} \Div (R \D U) + \frac{\nu \dot \tau}{\tau} \nabla R. 
\end{aligned}
\end{equation}

\medskip
This remark motivates the following definition.
\begin{definition} \label{def_sol_withdrag}
Given positive parameters $r_0,r_1 >0$ and initial data  
$(\sqrt{R_0}, \Lambda_0 = (\sqrt{R} U)_0) \in L^2(\mathbb T^d_{\ell}) \times L^2(\mathbb T^d_{\ell}),$
we call global weak solution to the isothermal system with drag forces \eqref{RU-drag} in $\mathbb T^d_{\ell}$ any pair 
\[
(\sqrt{R},U) \in 
C([0,\infty);H^1(\mathbb T^d_{\ell})-w) \times L^2_{\mathrm{loc}}(\mathbb R^+;L^2( \mathbb T^d_{\ell})),
\] 
satisfying 
\begin{enumerate}
\item[i)] Further regularity properties:
\[\sqrt{R}U \in C([0,\infty); L^2 (\mathbb T^d_{\ell})-w), \quad \nabla^2 \sqrt{R} \in L^2_{\rm loc}(0,\infty;L^2(\mathbb T^d_{\ell})).
\]
\item[ii)] Equations \eqref{RU-drag1-bis} and \eqref{RU-drag2-bis} in the sense of distributions. 
\item[iii)] Initial data $\sqrt{R}|_{t=0} = \sqrt{R_0}$ and $\sqrt{R} (\sqrt{R}U) |_{t=0} = \sqrt{R_0}\Lambda_0.$
\end{enumerate}
\end{definition}

\begin{remark}
We note that, since $\sqrt{R}$ and $\sqrt{R}U$ are continuous with
respect to time, 
we may give sense to the initial conditions required in item iii) of the above definition.
\end{remark} 

\begin{remark}
We observe the difference between the definition of weak solutions for the system without and with drag forces. When the latter are present ($r_0,r_1 >0$), $U$ is well defined as a function, $\nabla U$ as a distribution and $\sqrt R \mathbb D U$ is well defined. However, in the original system without drag forces, $U$ is not well defined and $\sqrt R \mathbb D U$ has to be understood as $\mathbf S_N$. 
\end{remark}

\begin{theorem}\label{theo:RU-drag}
Assume $r_0 , r_1 ,\nu,\eps > 0.$ 
Let $(\sqrt{R_0}, \Lambda_0 = (\sqrt{R} U)_0)$ %
be an initial data satisfying \eqref{RUinitial-theta2} and such that $\Edraginit, \EBDdraginit < + \infty$.
Then there exists a global weak solution $(R,U)$ to the isothermal fluid system with drag forces
  \eqref{RU-drag} in $\mathbb T^d_{\ell}$, in the sense of
  Definition~\ref{def_sol_withdrag}, associated to the initial data $(\sqrt{R_0}, \Lambda_0 )$. Furthermore, there exist constants 
  $C_1$ and $C_2$ (whose dependencies are mentioned in parenthesis)
  such that this solution 
  satisfies the energy inequality 
\[
\sup_{t \ge 0} \Edrag(R,U)
+ \int_0^\infty \Ddrag(R,U) \, \dd t \le C_1(\Edraginit),
\]
and also the BD-entropy inequality
\[
\sup_{t \ge 0} \EBDdrag(R,U)
+ \int_0^\infty \DBDdrag(R,U) \, \dd t \le C_2(\Edraginit,\EBDdraginit ).
\]
\end{theorem}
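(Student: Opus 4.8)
The plan is to realize the solution $(R,U)$ as a limit of the family $(R_n,U_n)$ of solutions to \eqref{RU-drag-reg} furnished by Proposition~\ref{prop:sol-approche}, taken along a sequence of regularizing parameters $(\delta_1,\delta_2,\eta_1,\eta_2)\to 0$, with $r_0,r_1,\nu,\eps$ and the initial data held fixed. As announced in the introduction, I would take these limits in the order $\delta_1,\delta_2\to 0$ first and then $\eta_1,\eta_2\to 0$, so that the artificial terms are absorbed one family at a time by their own weighted dissipations. Since $R_0$ is smooth and bounded below, the regularized initial energy and BD-entropy converge to $\Edraginit$ and $\EBDdraginit$ and so stay bounded. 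The first step is then to read off the bounds that \eqref{RU-drag-reg-estimation.1} and \eqref{estim-entropy} provide uniformly in $n$; the decisive point is that the right-hand side of \eqref{estim-entropy} has the form $C_1+(\delta_1+\delta_2)C_2+C_3(r_0)$ with $C_1,C_3$ independent of the regularizing parameters and $(\delta_1+\delta_2)C_2\to0$.

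This yields, uniformly in $n$, the bounds $\<y\>\sqrt{R_n},\ \sqrt{R_n}U_n\in L^\infty_{\rm loc}L^2$, $\nabla\sqrt{R_n}\in L^\infty_{\rm loc}L^2$, $\eps\nabla^2\sqrt{R_n}\in L^2_{\rm loc}L^2$ (through \eqref{eq:equivJungel}), $\sqrt\nu\,\sqrt{R_n}\D U_n\in L^2_{\rm loc}L^2$, and — this is where the positivity of $r_0,r_1$ enters — the genuine velocity bounds $\sqrt{r_0}\,U_n\in L^2_{\rm loc}L^2$ and $\sqrt{r_1}\,R_n^{1/4}U_n\in L^4_{\rm loc}L^4$; Lemma~\ref{lem:SigmaLlogL} controls $R_n\log R_n$ in $L^\infty_{\rm loc}L^1$. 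Compactness of the density is comfortable: $\sqrt{R_n}$ is bounded in $L^\infty_{\rm loc}H^1\cap L^2_{\rm loc}H^2$ and, via \eqref{RU-drag1-bis}, $\partial_t\sqrt{R_n}$ lies in a negative-order space, so Aubin--Lions gives $\sqrt{R_n}\to\sqrt R$ strongly in $L^2_{\rm loc}H^1$ and almost everywhere. Consequently $\nabla\sqrt{R_n}\otimes\nabla\sqrt{R_n}\to\nabla\sqrt R\otimes\nabla\sqrt R$ strongly and, paired with $\nabla^2\sqrt{R_n}\rightharpoonup\nabla^2\sqrt R$, the whole Korteweg stress in \eqref{RU-drag2-bis} passes to the limit.

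The crux is the compactness of the momentum and the passage to the limit in the quadratic convection term $\Div(\sqrt R U\otimes\sqrt R U)$ and the cubic drag term $r_1R|U|^2U$. During the first stage $\eta_1,\eta_2>0$ keep $R_n$ bounded below (Lemma~\ref{lem:lowerbound}), so $U_n$ is a genuine $L^2_tH^1$ field and this step is classical; the difficulty concentrates in the stage $\eta_1,\eta_2\to0$, where vacuum may form and no gradient bound on $U$ survives off $\{R>0\}$. I would resolve it by a Mellet--Vasseur-type argument: establish strong convergence of the momentum $m_n=R_nU_n$ to its limit $m$ by Aubin--Lions (it is bounded in space, while $\partial_t m_n$ is controlled in a negative-order space through \eqref{RU-drag2-bis}), deduce $U_n\to U:=(m/R)\mathbf 1_{R>0}$ almost everywhere on $\{R>0\}$, and then invoke the uniform $L^4$ bound on $R_n^{1/4}U_n$ supplied by $r_1>0$ to rule out concentration and upgrade this to strong convergence of $\sqrt{R_n}U_n$ in $L^2_{\rm loc}L^2$ and of $R_n^{1/4}U_n$ in $L^p_{\rm loc}L^p$ for $p<4$. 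This is exactly the point at which $r_0,r_1>0$ are indispensable, and the reason the drag terms can only be discarded at the later stage of Section~\ref{sec:tore}.

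With these convergences in hand, passing to the limit in \eqref{RU-drag1-bis}--\eqref{RU-drag2-bis} reduces to treating each term; in particular the viscous stress $\tfrac{\nu}{\tau^2}\Div(R\D U)$ is read as the product of the strongly convergent $\sqrt{R_n}$ with the weakly convergent $\sqrt{R_n}\D U_n$, and the residual regularizing terms are eliminated one family at a time, each being bounded by its prefactor ($\delta_i$ or $\eta_i$) times a quantity controlled through the energy or the corresponding weighted dissipation. The regularity and compatibility requirements of Definition~\ref{def_sol_withdrag}, together with the weak-in-time continuity $\sqrt R\in C([0,\infty);H^1-w)$ and $\sqrt R U\in C([0,\infty);L^2-w)$, follow from the uniform bounds and the limiting equations. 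Finally the two inequalities are obtained by lower semicontinuity of $\Edrag,\EBDdrag$ and of the dissipations $\Ddrag,\DBDdrag$ for the relevant weak topologies, discarding the non-negative vanishing regularizing pieces; this yields the stated bounds with constants $C_1(\Edraginit)$ and $C_2(\Edraginit,\EBDdraginit)$.
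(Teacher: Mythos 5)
Your two-stage scheme ($\delta_1,\delta_2\to 0$ at fixed $\eta_1,\eta_2$, then $\eta_1,\eta_2\to 0$), the uniform bounds read off from \eqref{RU-drag-reg-estimation.1} and \eqref{estim-entropy}, the Aubin--Lions compactness for $\sqrt{R_n}$ and for the momentum $R_nU_n$, and the Mellet--Vasseur-type upgrade (a.e.\ convergence of $U_n$ on $\{R>0\}$ combined with the uniform $L^4$ bound on $R_n^{1/4}U_n$ furnished by $r_1>0$) all coincide with the paper's proof of Theorem~\ref{theo:RU-drag}, which indeed invokes the argument of \cite[Lemma 2.3]{VasseurYu} for the convective and cubic drag terms and obtains the two inequalities by lower semicontinuity.

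There is, however, one genuine gap: your mechanism for discarding the regularizing terms --- ``bounded by its prefactor ($\delta_i$ or $\eta_i$) times a quantity controlled through the energy or the corresponding weighted dissipation'' --- works for the $\delta$-terms (e.g.\ $\delta_2\int \Delta U_\delta\cdot\Delta\Phi \le \delta_2^{1/2}\,(\sqrt{\delta_2}\|\Delta U_\delta\|_{L^2_{t,x}})\,\|\Delta\Phi\|_{L^2_{t,x}}\to 0$), but fails for \emph{both} $\eta$-terms, which is exactly where the technical work of the paper's Step 2 lies. For the cold pressure, the energy only gives $\eta_1\|R_\eta^{-\alpha}\|_{L^\infty_t L^1}\le C$ and the BD-dissipation only gives $\eta_1\|\nabla R_\eta^{-\alpha/2}\|_{L^2_{t,x}}^2\le C$, with no leftover positive power of $\eta_1$: pairing $\eta_1\int R_\eta^{-\alpha}\Div\Phi$ against these bounds yields $O(1)$, not $o(1)$. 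One must instead prove $\eta_1 R_\eta^{-\alpha}\to 0$ a.e.\ --- which requires $\mathrm{meas}\{R=0\}=0$, itself deduced via Fatou from the uniform $L^\infty_t L^1$ bound on $r_0\log(1/R_\eta)_+$ carried by the drag part of the BD-entropy, a piece of information your proposal never exploits --- and then conclude by equi-integrability, using the uniform $L^{5/3}_{t,x}$ bound obtained from $\sqrt{\eta_1}\,R_\eta^{-\alpha/2}\in L^2_tH^1\hookrightarrow L^2_tL^6$ interpolated with $L^\infty_tL^1$. The term $\tfrac{\eta_2}{\tau^2}R_\eta\nabla\Delta^{2s+1}R_\eta$ has the same defect: the available weighted bounds are $\sqrt{\eta_2}\,R_\eta\in L^\infty_tH^{2s+1}$ and $\sqrt{\eta_2}\,\Delta^{s+1}R_\eta\in L^2_{t,x}$, so the natural pairing is again only $O(1)$; the paper extracts the positive power $\eta_2^{(1-a)/2}$ with $a=\tfrac{2s+1}{2s+2}$ by interpolating $\dot H^{2s+1}$ between $\dot H^{2s+2}$ (weighted by $\sqrt{\eta_2}$) and $L^2$, where $R_\eta$ is bounded uniformly with no $\eta_2$ weight. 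Without these two arguments the limit momentum equation retains uncontrolled $O(1)$ residues, and your passage $\eta_1,\eta_2\to 0$ is incomplete.
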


\begin{proof}[Proof of Theorem \ref{theo:RU-drag} ]
The proof consists of three parts: starting with the regularized system \eqref{RU-drag-reg}, in the first one we pass to the limit in the parameters $\delta_1, \delta_2 \to 0$, which shall give us the existence of global weak solutions to an intermediate system given by \eqref{RU-drag-reg} with $\delta_1=\delta_2 = 0$; then we pass to the limit $\eta_1,\eta_2 \to 0$ to obtain a weak solution to \eqref{RU-drag} on the torus. 
In the whole proof $(\sqrt{R_0}, \Lambda_0 = (\sqrt{R} U)_0)$ is a fixed initial data satisfying \eqref{RUinitial-theta2} and the drag parameters $(r_0,r_1) \in (0,\infty)^2$ are fixed.

\medskip\noindent
\textit{Step 1. Limits $\delta_1, \delta_2 \to 0$.}
In this part, we fix $\eta_1 >0$ and $\eta_2 >0$ and we consider  sequence of parameters $\delta_1,\delta_2$ converging to $0$. To simplify notations we shall denote $\delta = (\delta_1,\delta_2)$ and drop the $\eta_1,\eta_2$ dependencies.
We consider the sequence of global weak solutions $\{ (R_\delta ,U_\delta) \}_\delta$ to the regularized problem~\eqref{RU-drag-reg} associated to $( {R_{0}} ,  U_{0})$, as constructed in Proposition~\ref{prop:sol-approche}.  First, we construct
limits $R$ and $U$ of this sequence as in Step 1 of Section \ref{sec:convergenceapproximate}. 

\medskip

We proceed with improving the sense of the convergence of $\{ (R_\delta ,U_\delta) \}_\delta$ to these limits. For this, we fix an arbitrary finite $T>0.$  Thanks to the energy and BD-entropy inequalities, this sequence verifies uniform estimates in the following spaces:
\begin{equation} \label{eq:unifbound-drag}
\begin{aligned}
& R_\delta (1+|y|^2 + |\log R_\delta|) \text{ in } L^\infty(0,T; L^1(\mathbb T^d_{\ell})), &
 \nabla \sqrt{R_\delta}  \text{ in } L^\infty(0,T; L^2(\mathbb T^d_{\ell})), \\
 & \sqrt{\eta_2} R_{\delta}  \text{ in } L^{\infty}(0,T;H^{2s+1}(\mathbb T^d_{\ell})),& 
 \\
&
\sqrt{R_\delta} U_\delta   \text{ in } L^\infty(0,T;  L^2(\mathbb T^d_{\ell})), 
& 
\sqrt{\nu} \, \sqrt{R_\delta} \nabla U_\delta   \text{ in } L^2(0,T; L^2(\mathbb T^d_{\ell})).
\end{aligned}
\end{equation}
Recalling \eqref{estim-lowerbdd}, this entails that $\{ R_{\delta} \}_{\delta}$ is bounded in $L^{\infty}(0,T;H^{1}(\mathbb T^d_{\ell}))$. 
Writing the weak form \eqref{RU-drag-reg1-weak} with a test function $\Psi  \in \mathcal D((0,T) \times \mathbb T^d_{\ell})$, we obtain that:
\[
\partial_t R_{\delta} = - \sqrt{R_{\delta}} \sqrt{R_{\delta}} \Div (U_{\delta}) -  2\sqrt{R_{\delta}} U_{\delta} \cdot \nabla \sqrt{R_{\delta}} 
+ \dfrac{\delta_1}{\tau^2} \Delta R
\quad \text{ in $\mathcal D'((0,T)\times \mathbb T^d_{\ell}).$}
\]
This implies that $\{ \partial_t R_{\delta} \}_\delta$ is also
bounded in $L^{2}(0,T;L^1(\mathbb T^d_{\ell}))$. Applying again
Ascoli-Arzel\`a arguments yields
$
R_{\delta}  \to R \text{ in $C([0,T];H^{2s}(\mathbb T^d_{\ell}))$}
$
and, moreover, with the uniform bound from below on $R_{\delta}$ in \eqref{estim-lowerbdd}, we get
\[
R^{-1}_\delta \to R^{-1} \quad \text{ in } C([0,T] \times \mathbb T^d_{\ell}).
\]

On the other hand, we note that the above bound \eqref{eq:unifbound-drag}
also entails that $\{ R_{\delta} U_{\delta}\}_{\delta}$ is bounded in $L^{2}(0,T;H^1(\mathbb T^d_{\ell})).$ 
Taking then $\Phi \in \DD((0,T) \times \mathbb T^d_{\ell})$
in \eqref{RU-drag-reg2-weak}, and recalling \eqref{eq_Ktwidentity} which is satisfied by  $R_{\delta}>0,$ we obtain (in $\DD'((0,T) \times \mathbb T^d_{\ell})$):
\begin{align*}
 \partial_t (R _{\delta}U_{\delta}) &=  - \frac{1}{\tau^2}\Div ( \sqrt{R_{\delta}} U_{\delta} \otimes \sqrt{R}_{\delta} U_{\delta}) - 2 y R_{\delta}  - \nabla R_{\delta}  + \eta_1 \nabla R_{\delta}^{-\alpha} \\
&\quad 
 - \frac{r_0}{\tau^2} U_{\delta} -  \frac{r_1}{\tau^2} R_{\delta} |U_{\delta}|^2 U_{\delta} - \frac{\delta_1}{\tau^2}(\nabla R_{\delta} \cdot \nabla) U_{\delta}  \\
&\quad
 + \frac{\eps^2}{2\tau^2}\(\sqrt{R_{\delta}} \nabla^{2}\sqrt{R_{\delta}} - \nabla \sqrt{R_{\delta}} \otimes \nabla \sqrt{R_{\delta}}\)    
+\frac{\nu}{\tau^2} \Div (R_{\delta} \D U_{\delta})  \\
& \quad 
+ \frac{\nu \dot \tau}{\tau} \nabla R_{\delta}
+\frac{\delta_2}{\tau^2} \Delta^2 U_{\delta}  + \frac{\eta_2}{\tau^2} R_{\delta} \nabla \Delta^{2s+1} R _{\delta}.
\end{align*}
Consequently, combining the uniform bounds in \eqref{eq:unifbound-drag} with the uniform bounds in the following spaces (again due to the energy and BD-entropy inequalities):
\begin{equation}\label{estim-RU-delta}
\begin{aligned}
& \sqrt{r_0}\, U_\delta \text{ in } L^2(0,T;L^2(\mathbb T^d_{\ell}),
&   \sqrt{r_1}\, R_\delta^{\frac{1}{4}} U_\delta\text{ in }L^4(0,T; L^4(\mathbb T^d_{\ell})) , 
\\
& \sqrt{\delta_2} \, \Delta U_\delta\text{ in }L^2(0,T; L^2(\mathbb T^d_{\ell}), &  R_\delta  \text{ in } L^2(0,T; H^{2s+2}(\mathbb T^d_{\ell})), 
\\
& \eta_1^{\frac{1}{\alpha}}\, R_\delta^{-1}\text{ in }L^\infty(0,T; L^{\alpha}(\mathbb T^d_{\ell})), 
& \sqrt{\nu \eps^2} \, \nabla^2 \sqrt{R_\delta} \text{ in } L^2(0,T ; L^2(\mathbb T^d_{\ell})),
\end{aligned}
\end{equation}
we conclude that $\{ \partial_t (R_{\delta} U_{\delta}) \}_\delta$ is bounded 
in $L^{2}(0,T;H^{-(2s+1)}(\mathbb T^d_{\ell})).$
This entails that $R_{\delta} U_{\delta} \to RU$ in $L^2(0,T; L^2(\mathbb T^d_{\ell})).$

%
Thanks to the previous estimates and Aubin-Lions/Ascoli-Arzel\`a arguments, we obtain the following convergences:
\begin{equation}\label{compacite-delta}
\begin{aligned}
R_\delta \to R  & \text{ in } L^2(0,T;H^{2s+2}(\mathbb T^d_{\ell})-w)\text{ and }C([0,T];H^{2s}(\mathbb T^d_{\ell})), \\
R_\delta U_\delta \to RU & \text{ in } L^2(0,T;L^p(\mathbb T^d_{\ell}))\,, \quad \forall \, p<6\\
U_\delta \to U & \text{ in } L^2(0,T;L^{2}(\mathbb T^d_{\ell})) ,
\\
\sqrt{R_\delta} U_\delta \to \sqrt{R} U &  \text{ in } L^p(0,T;L^2(\mathbb T^d_{\ell})) \text{ ($\forall \, p < \infty$) and } C([0,T] ; L^2(\mathbb T^d_{\ell})-w), 
\\
R_\delta^{\frac14} U_\delta \to R^{\frac14} U  & \text{ in } L^p(0,T;L^p(\mathbb T^d_{\ell})), \quad \forall \, p <4. \\
\end{aligned}
\end{equation}
The above list of convergences shows that we can pass to the limit in
the initial condition. It also readily implies that:
\begin{align}
R_\delta U_\delta \otimes U_{\delta} \to RU \otimes U & \text{ in } L^1(0,T;L^1(\mathbb T^d_{\ell})), \label{compacite-RU2-delta} \\
R_\delta |U_\delta|^2 U_\delta  \to R|U|^2 U  & \text{ in } L^1(0,T;L^1(\mathbb T^d_{\ell})), \label{compacite-RU3-delta}\\ 
\sqrt{R_\delta}  U_\delta \to \sqrt{R} U  & \text{ in } L^2(0,T;L^2(\mathbb T^d_{\ell})). 
\label{compacite-sqrtRU-delta}
\end{align}

We can now pass to the limit in the equations \eqref{RU-drag-reg1-weak}-\eqref{RU-drag-reg2-weak} when $\delta \to 0$, by remarking that, using the above estimates, we have
\[
\delta_1 \int_0^T \!\!\! \int \frac{1}{\tau^2} R_\delta \Delta \Psi \to 0,
\quad
\delta_1 \int_0^T \!\!\! \int \frac{1}{\tau^2} \nabla U_\delta  : \nabla R_\delta \otimes \Phi \to 0,
\quad
\delta_2 \int_0^T \!\!\! \int \frac{1}{\tau^2} \Delta U_\delta  \Delta \Phi \to 0,
\]
where $\Psi$ and $\Phi$ are smooth test functions with compact support in $(0,T) \times \mathbb T^d_{\ell}$.
We have hence constructed $(R,U)$ which is a global weak solution to the intermediate system corresponding to \eqref{RU-drag-reg} with $\delta_1=\delta_2=0$, and, passing to the limit $\delta \to 0$ in the energy~\eqref{RU-drag-reg-estimation.1} and BD-entropy~\eqref{estim-entropy} inequalities, the solution $(R,U)$ satisfies moreover the energy inequality~\eqref{RU-drag-reg-estimation.1} with $\delta_1=\delta_2=0$ as well as the BD-entropy inequality~\eqref{estim-entropy} with $\delta_1=\delta_2=0$.

\medskip

Before going further, we remark that the continuity equation \eqref{RU-drag-reg1} holds almost everywhere. Since $R >0$ on any compact interval of time, this entails that $\sqrt{R}$
satisfies \eqref{RU-drag1-bis} in $\DD'((0,\infty) \times \mathbb T^d_{\ell})$.

\medskip\noindent
\textit{Step 2. Limits $\eta_1 , \eta_2 \to 0$.}
With similar conventions as in the previous step, we introduce now $\eta = (\eta_1,\eta_2)$ and we consider $\{ (R_\eta, U_\eta) \}_\eta$ the sequence of global weak solutions associated with initial data $( \sqrt{R_{0}} , \Lambda_{0}  )$ constructed in the Step 1. Thanks to the energy and BD-entropy inequalities, 
we obtain again the following uniform bounds:
\begin{equation} \label{eq:unifbound-dragbis}
\begin{aligned}
& R_{\eta} (1+|y|^2 + |\log R_\eta|) \text{ in } L^\infty(0,T; L^1(\mathbb T^d_{\ell})), &
 \nabla \sqrt{R_\eta}  \text{ in } L^\infty(0,T; L^2(\mathbb T^d_{\ell})), \\
&
\sqrt{R_\eta} U_{\eta} \text{ in } L^\infty(0,T;  L^2(\mathbb T^d_{\ell})), 
& 
 \sqrt{R_\eta} \nabla U_\eta \text{ in } L^2(0,T; L^2(\mathbb T^d_{\ell})).
\end{aligned}
\end{equation}
Introducing this bound in \eqref{RU-drag1-bis} -- so that we prove
$\partial_t \sqrt{R_{\eta}}$ is bounded in $L^{2}(0,T;H^{-1}(\mathbb T^{d}_{\ell}))$ --  and remarking that 
$\sqrt{R_{\eta}}$ is bounded in $L^{\infty}(0,T;H^{1}(\mathbb T^{d}_{\ell})),$ Aubin-Lions argument entails that 
\[
\sqrt{R_{\eta}} \to  \sqrt{R} \text{ in } C([0,T];L^2(\mathbb T^d_{\ell})) \text{ and } L^{2}(0,T;L^{2}(\mathbb T^{d}_{\ell})).
\]
Furthermore, thanks to the energy and BD-entropy inequalities, we have the uniform bounds:
\begin{equation}\label{estim-RU-eta}
\begin{aligned}
& \sqrt{r_0}\, U_\eta  \text{ in }  L^2(0,T; L^2(\mathbb T^d_{\ell})), \\
& \sqrt{r_1}\, R_\eta^{\frac{1}{4}} U_\eta \text{ in }   L^4(0,T; L^4(\mathbb T^d_{\ell})), \quad
r_0 \log \left(\frac{1}{R_\eta}\right)_+ \text{ in }   L^\infty(0,T; L^1(\mathbb T^d_{\ell})), \\
&  \eps \, \nabla^2 \sqrt{R_\eta} \text{ in }  L^2(0,T; L^2(\mathbb T^d_{\ell})), \quad
\sqrt{\eps} \, \nabla R_\eta^{\frac{1}{4}} \text{ in }  L^4(0,T; L^4(\mathbb T^d_{\ell})).
\end{aligned}
\end{equation}
From these bounds, and arguing similarly as in Step 1, we get the convergences
\begin{equation}\label{compacite-eta}
\begin{aligned}
U_\eta \to U & \text{ in } L^2(0,T;L^{2}(\mathbb T^d_{\ell}))-w, \\
\sqrt{R_\eta} U_\eta \to \sqrt{R} U &  \text{ in } C([0,T] ; L^2(\mathbb T^d_{\ell})-w), \\
R_\eta^{\frac14} U_\eta \to R^{\frac14} U  & \text{ in } L^4(0,T;L^4(\mathbb T^d_{\ell}))-w, \\
R_\eta U_\eta \to RU & \text{ in } L^2(0,T;L^2(\mathbb T^d_{\ell})) .
\end{aligned}
\end{equation}
Furthermore, we remark that we have
\[
R_\eta |U_\eta|^2 U_\eta  \to R|U|^2 U   \text{ a.e.}\\
\]
so that we can apply the uniform bound on $\{ R_{\eta}^{1/4}U_{\eta}\}_{\eta}$ to reproduce the arguments of \cite[Lemma 2.3]{VasseurYu} to yield:
\[
R_\eta U_\eta \otimes U_{\eta} \to RU \otimes U  \text{ in } L^1(0,T;L^1(\mathbb T^d_{\ell})) .
\]
With these convergences at-hand, we can already pass to the limit in the weak formulation of the continuity equation~\eqref{RU-drag-reg1-weak}.
For the weak formulation~\eqref{RU-drag-reg2-weak}, we only need to prove the convergence to zero of the cold
pressure term $\eta_1 \nabla R_\eta^{-\alpha}$ and the regularization
term $\frac{\eta_2}{\tau^2} R_\eta \nabla \Delta^{2s+1} R_\eta$, since the other terms can be treated with the above convergences. 

\smallbreak

We recall that we have the estimates
\begin{equation}\label{estim-RU-eta-bis}
\begin{aligned}
\sqrt{\eta_2} \, R_\eta  \in L^\infty(0,T; H^{2s+1}(\mathbb T^d_{\ell})), \quad
\sqrt{\eta_2} \, \Delta^{s+1} R_\eta  \in L^2(0,T; L^2(\mathbb T^d_{\ell})), \\
\eta_1^{\frac{1}{\alpha}}\, R_\eta^{-1} \in L^\infty(0,T; L^{\alpha}(\mathbb T^d_{\ell})), \quad
\sqrt{\eta_1}\, \nabla R_\eta^{-\frac{\alpha}{2}} \in L^2(0,T; L^2(\mathbb T^d_{\ell})).
\end{aligned}
\end{equation}
On the one hand, from \eqref{estim-RU-eta-bis} and Fatou's lemma we obtain
$$
\int  \log \left(\frac{1}{R}\right)_+  \dd y =
\int \liminf_{\eta \to 0} \log \left(\frac{1}{R_\eta}\right)_+  \dd y < + \infty,
$$
which implies that $\mathrm{meas}(\{ y \in \T^d_\ell \mid R(t,y) = 0   \}) = 0$ for a.e.\ $t\in (0,T)$. Since we already know that $R_\eta \to R$ a.e.\ in $(t,y)$, we deduce
$$
\eta_1 R_\eta^{-\alpha} \to 0  \text{ a.e.\ in } (t,y) \text{ when } \eta_1 \to 0.
$$
We now claim that the uniform estimate $\eta_1 R_\eta^{-\alpha} \in L^{\frac53} ((0,T) \times \T^d_\ell)$ holds, from which we deduce the convergence
\[
\eta_1 R_\eta^{-\alpha} \to 0 \text{ in } L^1(0,T ; L^1 (\T^d_\ell)) \text{ when } \eta_1 \to 0.
\]
Let us prove this claim:
since $\sqrt{\eta_1}\, \nabla R_\eta^{-\frac{\alpha}{2}} \in L^2(0,T ; L^2 (\T^d_\ell))$ and $\sqrt{\eta_1}\, R_\eta^{-\frac{\alpha}{2}} \in L^\infty (0,T ; L^2 (\T^d_\ell))$, we get $\sqrt{\eta_1}\, R_\eta^{-\frac{\alpha}{2}} \in L^2(0,T ; H^1(\T^d_\ell) ) \hookrightarrow L^2(0,T ; L^6 (\T^d_\ell))$, 
 whence ${\eta_1} R_\eta^{-\alpha} \in L^1 (0,T ; L^3 (\T^d_\ell))$. We finally obtain the claim by using the interpolation inequality 
\[
\| f \|_{L^{\frac53} ((0,T) \times \T^d_\ell)} \le \| f \|_{L^\infty (0,T ; L^1 (\T^d_\ell))}^{\frac25} \, \| f \|_{L^1 (0,T ; L^3 (\T^d_\ell))}^{\frac35}.
\]

On the other hand, we now want to show that, for any test function $\Phi \in \mathcal D([0,T) \times \T^d_\ell)^d$,
\begin{equation}
\eta_2 \int_0^T \!\!\! \int \frac{1}{\tau^2} \Delta^{s+1} R_\eta \Delta^s \left[ \nabla R_\eta \cdot \Phi + R_\eta \Div \Phi \right]
\to 0 \text{ as } \eta_2 \to 0,
\end{equation}
and we only concentrate in the sequel on the most difficult term, that is corresponding to the $\Delta^s(\nabla R_\eta )\cdot \Phi$ term, the other ones being treated similarly. Recall that $R_\eta \in L^\infty (0,T ; L^1 \cap L^3 (\T^d_\ell))$ uniformly in $\eta$ thanks to \eqref{eq:unifbound-dragbis}, and also the interpolation inequality
\[
\| f \|_{\dot H^{2s+1}(\T^d_\ell) } \lesssim \| f \|_{\dot H^{2s+2}(\T^d_\ell) }^{\frac{2s+1}{2s+2}} \, \| f \|_{L^{2}(\T^d_\ell) }^{\frac{1}{2s+2}}.
\]
Therefore, denoting $0 < a = \frac{2s+1}{2s+2} < 1$, we have
\[
\begin{aligned}
&\left|  \eta_2 \int_0^T \!\!\! \int \frac{1}{\tau^2} \Delta^{s+1} R_\eta \Delta^s(\nabla R_\eta )\cdot \Phi \right| \\
&\qquad
\le C_{\Phi}\,  \eta_2   \| \nabla^{2s+2} R_\eta\|_{L^2(0,T ; L^2 (\T^d_\ell))} \,  \| \nabla^{2s+1} R_\eta \|_{L^2(0,T ; L^2 (\T^d_\ell))}  
\\
&\qquad
\le C_{\Phi}\,  \eta_2^{\frac12-\frac{a}{2}}  \left( \sqrt{\eta_2} \| \nabla^{2s+2} R_\eta\|_{L^2(0,T ; L^2 (\T^d_\ell))} \right)^{1 +  \frac{2s+1}{(2s+2)}} \,  \| \nabla^{2s+1} R_\eta \|_{L^2(0,T ; L^2 (\T^d_\ell))}^{\frac{1}{2s+2}}
\to 0 \text{ as } \eta_2 \to 0.
\end{aligned}
\]
This ends the proof that $(\sqrt{R},U)$ satisfies \eqref{RU-drag2-bis}.
\end{proof}

At this stage we have constructed a global weak solution $(\sqrt{R},U)$ to the isothermal fluid system \eqref{RU-drag} with drag forces ($r_0 , r_1 >0$) on the torus $\T^d_\ell$, in the sense of Definition~\ref{def_sol_withdrag}, 
for smooth initial data satisfying \eqref{RUinitial-theta2}. 
Furthermore this solution verifies the energy and BD-entropy inequalities of the statement of the theorem, which are obtained straightforwardly in the limit $\eta \to 0$ from the associated inequalities for $(R_\eta, U_\eta)$.

%

\section{Global weak solutions in the whole space $\R^d$}\label{sec:tore} 

The 
next steps consist in passing to the limit $r_0,r_1\to 0,\ell \to
\infty$, and possibly $\eps \to 0$. To do so, we adapt the
approach of 
\cite{LacroixVasseur}, based on a suitable notion of renormalized
solution. We emphasize the main steps of the proof and the technical
modifications, and refer to \cite{LacroixVasseur} for other details. 

\subsection{Outline of the proof}
The method introduced in \cite{LacroixVasseur} is based on the
introduction of a new family 
of solutions to the Navier-Stokes system: the renormalized weak solutions.  In our framework these solutions are defined as follows:
\begin{definition}[Renormalized weak solution]\label{def:renorm}
Let $\Omega = \mathbb T^d_{\ell}$ or $\Omega = \mathbb R^d.$ Let $r_0,
r_1 \ge 0$,  $ \eps \ge 0$ and $\nu >0$.  Let $(\sqrt{R_0}, \Lambda_0 =
(\sqrt{R} U)_0 ) \in H^1\cap \F(H^1)(\Omega) \times L^2(\Omega)$
verify
\[
\sqrt{R_0} \ge 0 \text{ a.e.  on }  \Omega,  \quad   (\sqrt{R}U)_0 =
0 \text{ a.e. on } \{\sqrt{R_0} = 0 \}.
\]
We say that $(R,U)$ is a global renormalized weak solution to \eqref{RU-drag} in $\Omega$, and associated to the initial data  $(\sqrt{R_0}, \Lambda_0)$, if
there exists a collection $(\sqrt R,\sqrt R U,{\mathbf S}_K,{\mathbf
  T}_N)$ satisfying
\begin{itemize}
\item[i)] The following regularities: 
\begin{align*}
&\(\<y\> +|U|\) \sqrt{R} \in L^{\infty}_{\rm loc}\(0,\infty; L^2 (\Omega)\),\quad 
 \nabla \sqrt R \in L^{\infty}_{\rm loc}\(0,\infty; L^2 (\Omega)\),\\ 
& \eps \nabla^2 \sqrt{R} \in L^2_{\rm loc}(0,\infty;L^2(\Omega)) ,\quad
  \mathbf T_N \in L^2_{\rm loc}(0,\infty; L^2(\Omega)) ,\\
& \sqrt{\eps} \nabla R^{1/4}\in L^4_{\rm loc}(0,\infty;L^4(\Omega)),\quad
  r_1^{1/4}R^{1/4}U \in L^4_{\rm loc}(0,\infty;L^4(\Omega),\\
& r_0^{1/2} U\in L^2_{\rm loc}(0,\infty;L^2(\Omega)),\quad r_0\log
  R\in L^\infty_{\rm loc}(0,\infty;L^1(\Omega)),
\end{align*}
with the compatibility conditions
\[
\sqrt{R} \ge 0 \text{ a.e.  on } (0,\infty)\times \Omega,  \quad   \sqrt{R}U=
0 \text{ a.e. on } \{\sqrt{R} = 0 \}.
\]
\item[ii)] For any function $\varphi\in W^{2,\infty}(\R^d)$, there
  exist 
two measures $f_\varphi,g_\varphi \in{\mathcal M}((0,\infty)\times \Omega)$ 
    with 
  \begin{equation*}
    \| f_\varphi\|_{{\mathcal M}((0,\infty)\times \Omega) }+
\| g_\varphi\|_{{\mathcal M}((0,\infty)\times \Omega) }
\le C\| \nabla^2 \varphi\|_{L^\infty(\R^d)},
  \end{equation*}
where the constant $C$ depends only on the solution $(\sqrt R,\sqrt R
U)$, such that  in $\mathcal D'((0,\infty)\times \mathbb R^d)$,
\begin{subequations}\label{eq:renorm}
    \begin{align}[left = \empheqlbrace\,] \label{eq_renorm_continuity}
  &&& \partial_t \sqrt{R}+\frac{1}{\tau^2}\Div (\sqrt{R} U )=
  \frac{1}{2\tau^2}{\rm Trace}(\mathbf T_N),\\
     \label{eq_renorm_momentum} 
   &&& \partial_t ({R}\varphi(U)) +\frac{1}{\tau^2}\Div ( R \varphi(U) \otimes U) \\
    &&& 
      \quad +2 y R\varphi'(U) +\varphi'(U)\nabla
      R+\frac{r_0}{\tau^2}U\varphi'(U)  
+ \frac{r_1}{\tau^2}R|U|^2U\varphi'(U) \notag \\
     &&&\quad =\Div \left(\dfrac{\nu}{\tau^2} \sqrt R \varphi'(U)\mathbf S_N
        + \dfrac{\eps^2}{2\tau^2} \varphi'(U)\mathbf S_K\right) \notag
        +      \dfrac{\nu \dot{\tau}}{\tau} \varphi'(U)\nabla R+f_\varphi , \notag
    \end{align}
\end{subequations}
with $\mathbf S_N$ the symmetric part of $\mathbf T_N$ and the compatibility conditions:
\begin{align*} 
& \sqrt{R}\varphi'_i(U)[\mathbf T_{N}]_{jk}
  = \partial_j(R\varphi'_i(U) U_k) - 2 \sqrt{R}U_k \partial_j
  \sqrt{R}+g_\varphi\,, \quad \forall i,j,k\in \{1,\cdots,d\},\\[6pt]  
& \mathbf S_K 
=\sqrt{R}\nabla^2 \sqrt{R} -  \nabla \sqrt{R} \otimes \nabla \sqrt{R}
  \,. 
\end{align*}
\item[iii)] For any $\psi\in C^\infty(\Omega)$, 
  \begin{align*}
    &\lim_{t\to 0}\int_{\Omega} \sqrt{R}(t,y)\psi(y) \, \dd y=
    \int_{\Omega}   \sqrt{R_0} (y)\psi(y) \, \dd y, \\
& \lim_{t\to 0}\int_{\Omega} \sqrt{R}(t,y) (\sqrt{R} U)(t,y) \psi(y) \, \dd y =
    \int_{\Omega}  \Lambda_0 (y) \psi(y) \, \dd y.
  \end{align*}
\end{itemize}
\end{definition}

Recall the definition of global weak solutions for
\eqref{RU-drag} on the torus in Definition~\ref{def_sol_withdrag} for
the case $r_0,r_1 >0$, or in Definition~\ref{def:weak} for solutions in
$\mathbb R^d$ with $r_0=r_1=0$.  
The main interest of the notion of renormalized solutions lies in the
fact that it is easier to 
construct solutions to \eqref{eq:renorm}. More precisely, it is easier to prove the weak stability of renormalized solutions, and to prove the following
properties:
\begin{itemize}

\item For $r_0,r_1\ge 0$, any renormalized weak solution is also a
  weak solution, 
\item In the case $r_0,r_1,\eps>0$, the two notions are equivalent: any
  weak solution is a renormalized solution. 
\end{itemize}

The proof of existence of weak solution to the quantum Navier Stokes
system  then reduces to three steps:
\begin{itemize}
\item Proving that the weak solutions with drag forces that we constructed previously are indeed 
renormalized solutions.
\item Proving compactness of renormalized solutions in terms of the
  parameters $r_0,r_1,\eps$ and $\ell$.
\item Proving that renormalized solutions in the whole space provide weak solutions in $\mathbb R^d.$
\end{itemize}

\subsection{Proof of the main theorem}

Consider initial data $(\sqrt{R_0},\Lambda_0 = (\sqrt{R}U)_0) \in
H^1\cap \F(H^1)(\mathbb R^d) \times L^2(\mathbb R^d)$ 
as in the assumption of Theorem~\ref{theo:main}. We first construct a sequence of initial data
\[
\sqrt{R_{0,\ell}},\Lambda_{0,\ell}  \in H^1(\mathbb T^d_{\ell}) \times L^2(\mathbb T^d_{\ell}) ,
\quad \forall \, \ell \in \mathbb N^*,
\] 
which enter the framework of Theorem  \ref{theo:RU-drag}. This shall yield an associated sequence $\{ (\sqrt{R}_{\ell},U_{\ell})\}_{\ell \in \mathbb
  N^*}$ of weak solutions to the isothermal system \eqref{RU-drag}
with drag forces ($r_0,r_1>0$) on the torus $\mathbb T^d_\ell$. We design our sequence of truncated initial data so that, for
well-chosen drag parameters, the  energy and BD-entropy estimates of
Theorem~\ref{theo:RU-drag} yield uniform bounds for these solutions. 

\smallbreak 

So, we consider a plateau function $\chi\in C_c^\infty(\R^d)$ and smoothing kernel 
$\zeta \in C^{\infty}_c(\mathbb R^d)$ such that
\begin{align*}
&   {\mathbf 1}_{|y|\le 1/2}\le \chi \le {\mathbf 1}_{|y|<1}, \\
&  \operatorname{supp}(\zeta) \subset B(0,1),  \qquad \int_{\mathbb R^d}
                                                                   \zeta(y){\rm d}y = 1, 
\end{align*}
and, for $\ell,\iota>0$, we set
\begin{equation*}
  \chi_\ell(y)=\chi\(\frac{y}{\ell}\) , \quad \zeta_{\iota}(y) = \dfrac{1}{\iota^d} \zeta\left( \frac{y}{\iota}\right).
\end{equation*}
Given $\ell \in \mathbb N^*,$ $\iota >0$ and $\theta >0$ we define now 
$S^{0}_{\ell,\theta,\iota}$ and $\Lambda_{0,\ell}$ as 
\begin{equation*}
 S^{0}_{\ell,\theta,\iota} (y) =  \(\sqrt{R_{0}} (y) \chi_\ell(y) + \theta\)\ast \zeta_{\iota},\quad
   \Lambda_{0,\ell} (y) = \Lambda_{0} (y),\quad \text{for}\quad y\in [-\ell,\ell]^d. 
\end{equation*}
Since $\chi_\ell$ is zero on the boundary of the box, the above
formula for $S^{0}_{\ell,\theta,\iota}$ defines an initial data that is
smooth, strictly positive, and periodic. The above candidate 
$(S^{0}_{\ell,\theta,\iota},\Lambda_{0,\ell})$ satisfies then the assumptions
of Theorem \ref{theo:RU-drag} whichever the value of $\theta,\iota >0.$
The main property of this construction is the following proposition. 

\begin{proposition}
There exist sequences $(\theta_\ell)_{\ell \in \mathbb N^*}$
and $(\iota_{\ell})_{\ell \in \mathbb N^*}$ such that, denoting 
\[
\sqrt{R_{0,\ell}} := S^{0}_{\ell,\theta_{\ell},\iota_{\ell}} ,\quad
\forall \, \ell \in \mathbb N^*,  
\]
we have:
\begin{align*}
& \limsup_{\ell \to \infty} \int_{\mathbb T^{d}_\ell} {R}_{0,\ell}(x){\rm d}x  \leq \int_{\mathbb R^d} R_{0} ,\\
& \limsup_{\ell \to \infty} \int_{\mathbb T^d_{\ell}} |\nabla \sqrt{R}_{0,\ell} |^2  \leq \int_{\mathbb R^d} |\nabla \sqrt{R}_{0} |^2,   \\
& \limsup_{\ell \to \infty} \int_{\mathbb T^d_{\ell}} R_{0,\ell}|y|^2 \leq \int_{\mathbb R^d} R_0 |y|^2.
\end{align*}
\end{proposition}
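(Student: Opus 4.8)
The plan is to isolate the three distinct perturbations that separate $S^{0}_{\ell,\theta,\iota}$ from $\sqrt{R_0}$ --- the truncation by $\chi_\ell$, the mollification by $\zeta_\iota$, and the additive constant $\theta$ --- and to close by a diagonal choice of $\theta_\ell,\iota_\ell\to 0$. Write $f:=\sqrt{R_0}$; the assumption $\sqrt{R_0}\in H^1\cap\F(H^1)(\R^d)$ amounts to $f,\nabla f,|y|f\in L^2(\R^d)$, so the three target integrals $\int_{\R^d} f^2=\int_{\R^d}R_0$, $\int_{\R^d}|\nabla f|^2=\int_{\R^d}|\nabla\sqrt{R_0}|^2$ and $\int_{\R^d}|y|^2f^2=\int_{\R^d}R_0|y|^2$ are finite. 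Since $S^{0}_{\ell,\theta,\iota}=\theta+(f\chi_\ell)\ast\zeta_\iota$, I set $F_\ell:=f\chi_\ell$ and $F_\ell^\iota:=F_\ell\ast\zeta_\iota$, and record the two elementary facts doing most of the work: mollification is an $L^2$- and $\dot H^1$-contraction (Young's inequality with $\|\zeta_\iota\|_{L^1}=1$), and $\nabla F_\ell^\iota=(\nabla F_\ell)\ast\zeta_\iota$.

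First I would fix $\ell$ and pass to the limit $(\theta,\iota)\to(0,0)$. The constant $\theta$ enters the torus integrals only through terms of the form $\theta^2(2\ell)^d$, $\theta\int_{\T^d_\ell}F_\ell^\iota$ and $\theta^2\int_{\T^d_\ell}|y|^2$, each a fixed power of $\ell$ times a positive power of $\theta$, hence vanishing as $\theta\to0$ with $\ell$ fixed. For the mollification, $F_\ell^\iota\to F_\ell$ in $H^1(\R^d)$, and since $F_\ell$ and all $F_\ell^\iota$ (for $\iota\le1$) are supported in the fixed compact set $\overline{B(0,\ell+1)}$ on which $|y|^2$ is bounded, $L^2$ convergence there upgrades to $\int|y|^2(F_\ell^\iota)^2\to\int|y|^2F_\ell^2$. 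Because $\supp F_\ell\subset\overline{B(0,\ell)}\subset[-\ell,\ell]^d$, the periodization implicit in reading $S^{0}_{\ell,\theta,\iota}$ on $\T^d_\ell$ wraps only the $O(\iota)$-thin layer near the faces, an error vanishing as $\iota\to0$. Thus, for each fixed $\ell$,
\begin{equation*}
\int_{\T^d_\ell}R_{0,\ell}\to\int_{\R^d}F_\ell^2,\quad \int_{\T^d_\ell}|\nabla\sqrt{R_{0,\ell}}|^2\to\int_{\R^d}|\nabla F_\ell|^2,\quad \int_{\T^d_\ell}R_{0,\ell}|y|^2\to\int_{\R^d}F_\ell^2|y|^2 .
\end{equation*}

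Next I would let $\ell\to\infty$ in the truncated integrals. Dominated convergence gives $\int F_\ell^2=\int f^2\chi_\ell^2\to\int f^2$ and $\int|y|^2F_\ell^2\to\int|y|^2f^2$, with dominating functions $f^2$ and $|y|^2f^2$. For the gradient, $\nabla F_\ell=\chi_\ell\nabla f+f\nabla\chi_\ell$ with $|\nabla\chi_\ell|\le C/\ell$; the main term $\int\chi_\ell^2|\nabla f|^2\to\int|\nabla f|^2$ by dominated convergence, while $\int f^2|\nabla\chi_\ell|^2\le(C/\ell^2)\int_{|y|\ge\ell/2}f^2\to0$ controls the remainder and, via Cauchy--Schwarz, the cross term, so $\int|\nabla F_\ell|^2\to\int|\nabla f|^2$. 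Combining the two limits by a diagonal extraction --- choosing, for each $\ell$, parameters $\theta_\ell,\iota_\ell>0$ (with $\theta_\ell$ additionally forced to decay fast enough in $\ell$, say $\theta_\ell\le\ell^{-(d+4)}$, to neutralise the $\ell$-dependent prefactors above) so that each torus integral differs from its truncated counterpart by at most $1/\ell$ --- yields $\int_{\T^d_\ell}R_{0,\ell}\to\int_{\R^d}R_0$ and the analogous convergences, which are stronger than the claimed $\limsup$ bounds.

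The step I expect to be the main obstacle is the second-moment estimate $\int_{\T^d_\ell}R_{0,\ell}|y|^2$, the only place where the unbounded weight $|y|^2$ meets both the translation introduced by mollification and the constant $\theta$ integrated over a box whose volume and second moment grow like $\ell^d$ and $\ell^{d+2}$. Controlling it forces the quantitative decay of $\theta_\ell$ and makes the order of the limits matter; this is also precisely where the hypothesis $\sqrt{R_0}\in\F(H^1)$ is genuinely used, through $|y|f\in L^2$. The face-center periodization is a secondary technical nuisance, handled uniformly by taking $\iota_\ell$ small, using that $\supp(f\chi_\ell)$ meets $\partial([-\ell,\ell]^d)$ only on a set of measure zero.
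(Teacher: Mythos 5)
Your argument is correct, but it follows a genuinely different route from the paper's, essentially by reversing the order of the limits. The paper first removes $\theta$ at fixed $(\ell,\iota)$ by $C^1$-continuity of the three functionals on the compact torus, then takes $\limsup_{\ell\to\infty}$ at \emph{fixed} $\iota$, and only at the very end sends $\iota_\ell\to 0$; because $\ell\to\infty$ there precedes $\iota\to 0$, the second moment cannot be handled by boundedness of $|y|^2$ on a fixed compact support, and the paper's key device is instead the convexity (Jensen) inequality $|(\sqrt{R_0}\chi_\ell)\ast\zeta_\iota|^2\le \big(R_0\chi_\ell^2\big)\ast\zeta_\iota$ combined with the translation estimate $\int \big(g\ast\zeta_\iota\big)|y|^2\le \int g\,\big((1+\iota)|y|^2+C\iota\big)$, which yields an $\ell$-uniform bound of the form $(1+\iota)\int_{\R^d} R_0|y|^2+C\iota^2\int_{\R^d}R_0$ before $\iota_\ell\to0$ is invoked. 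You instead freeze $\ell$, send $(\theta,\iota)\to(0,0)$ --- where the fixed compact support makes the weighted $L^2$ convergence trivial and the $\theta$-contributions are fixed powers of $\ell$ times positive powers of $\theta$ --- then send $\ell\to\infty$ by dominated convergence together with the standard cutoff-gradient estimate $|\nabla\chi_\ell|\lesssim 1/\ell$, and close with a diagonal extraction. What each approach buys: the paper's avoids any diagonalisation and any rate condition on $\theta_\ell$, at the price of the Jensen/duality moment estimate; yours dispenses with that estimate entirely, at the price of the diagonal bookkeeping and of forcing $\theta_\ell$ to decay fast enough to kill the $\theta^2\ell^{d+2}$ term in the second moment --- and it delivers genuine convergence of all three integrals to their limits, which is stronger than the stated limsup inequalities. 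Your explicit treatment of the $O(\iota)$-thin layer by which the mollified support leaks past the face centers of $[-\ell,\ell]^d$ is, if anything, more careful than the paper, which passes over this point silently.
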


\begin{proof}
We note that 
\[
S^{0}_{\ell,\theta,\iota} \Tend \theta 0 \(\sqrt{R_0} \chi_{\ell}\)\ast \zeta_{\iota}=:
S^0_{\ell,\iota} \text{ in $ C^1(\mathbb T^{d}_{\ell}).$} 
\]
Since all the integrals involved in our proposition are continuous in $S^{0}_{\ell,\theta,\iota}$ for the $C^{1}$-topology, 
we may only prove the claimed inequalities by replacing $S^{0}_{\ell,\theta,\iota}$ with $S^{0}_{\ell,\iota}.$

\smallbreak

Standard arguments with the convolution -- combined with explicit computations of the truncation -- entail that, for arbitrary $\iota >0$:
\begin{align*}
& \limsup_{\ell \to \infty} \int_{\mathbb T^d_{\ell}} |S^{0}_{\ell,\iota}|^2 {\rm d}x    \leqslant  \int_{\mathbb R^d} R_0 ,\\
& \limsup_{\ell \to \infty} \int_{\mathbb T^d_{\ell}} |\nabla S^{0}_{\ell,\iota}|^2 \leqslant  \int_{\mathbb R^d} |\nabla \sqrt{R_0}|^2.
\end{align*} 
Then, by a convexity argument and duality formulas for the convolution, we obtain that
\begin{align*}
\int_{\mathbb T^{d}_{\ell}} S^0_{\ell,\iota}|y|^2 & = \int_{\mathbb T^d_{\ell}}  |[\sqrt{R_0} \chi_\ell] \ast \zeta_{\iota}|^2 |y|^2  \leq \int_{\mathbb T^{d}_{\ell}} [|\sqrt{R_0} \chi_{\ell}|^2 * \zeta_{\iota}] |y|^2 \\
& \leq \int_{\mathbb T^{d}_{\ell}} |\sqrt{R_0} \chi_{\ell}|^2 ((1+\iota)|y|^2 + C\iota), 
\end{align*}
for an absolute constant $C.$
Consequently, we obtain again that, for arbitrary $\iota >0$,
\[
\limsup_{\ell \to \infty} \int_{\mathbb T^{d}_{\ell}} |S^0_{\ell,\iota}|^2 
|y|^2 \leq   \int_{\mathbb R^d} R_0 (1+\iota)|y|^2 + C \iota^2 \int_{\mathbb R^d} R_0.
\]
It thus suffices to consider a sequence $\iota_\ell\to 0$. 
\end{proof}
Note that applying Lemma~\ref{lem:SigmaLlogL} to
\begin{equation*}
 \mathbf 1_{[-\ell,\ell]^d}\sqrt{ R_{0,\ell}} ,
\end{equation*}
viewed as a function on $\R^d$, we infer from the above proposition
that
$  \int_{\mathbb T^{d}_{\ell}} R_{0,\ell} \left|\log R_{0,\ell}\right|$
is bounded uniformly in $\ell$. 

In what follows, we consider that
$(\sqrt{R_{0,\ell}},\Lambda_{0,\ell})_{\ell \in \mathbb N^*}$ is the
sequence of initial data constructed in the previous
proposition. Invoking Theorem \ref{theo:RU-drag} with
these data for 
arbitrary $\ell \in \mathbb N^*$, we obtain a sequence
$(\sqrt{R_{\ell}},U_{\ell})_{\ell \in \mathbb N^*}$ such that  
for arbitrary $\ell \in \mathbb N^*,$ the pair
$(\sqrt{R_{\ell}},U_{\ell})$ is a global weak solution to \eqref{RU-drag} on the
torus $\T_\ell^d$. 
We
denote also 
\[
r_{0,\ell} := \dfrac{1}{\ell +\( \displaystyle \int_{\mathbb T^d_{\ell}}
  \log(R_{0,\ell}) \mathbf{1}_{R_{0,\ell} < 1}\)^2}\,, 
\quad 
r_{1,\ell} := \dfrac{1}{\ell} ,\quad \eps_{\ell} = \eps  + \dfrac{1}{\ell},
\]
and of course, these values affect the above mentioned sequence of
solutions $(\sqrt{R_{\ell}},U_{\ell})_\ell$.
These choices ensure that the associated sequence of initial energies
$\Edrag$ (barotropic. entropies $\EBDdrag$) converge to the energy $\EE$
(resp. entropy $\EBD$) of $(\sqrt{R_0},\Lambda_0).$ As a matter of fact, the
somehow intricate choice for $r_{0,\ell}$ is motivated by this
property, to obtain
\begin{equation*}
 r_{0,\ell}  \int_{\T^d_\ell} \log (R_{0,\ell} )
 \mathbf{1}_{R_{0,\ell} < 1} \Tend \ell \infty 0.
\end{equation*}

\subsubsection{Weak solutions with drag forces are renormalized solutions}
Given $\ell \in \mathbb N^*,$ we first obtain that the weak solution
we constructed in the previous step is a renormalized solution as
stated in Definition~\ref{def:renorm}.  
To start with, we note that, in the case with drag and when $\Omega$ is a torus,  item i) in Definition~\ref{def:renorm} gathers all the regularity properties inherited from 
the energy and entropy estimates in Theorem~\ref{theo:RU-drag}. The
only point that deserves more details is 
the construction of the tensor $\mathbf{T}_{N,\ell}.$ We set: 
\[
\mathbf{T}_{N,\ell} = \sqrt{R_{\ell}} \nabla U_{\ell}.
\]
This tensor is well defined (at least in $\mathcal D'((0,\infty)
\times \mathbb T^{d}_{\ell})$) since, 
thanks to the energy/entropy estimates,   we have $U_{\ell} \in
L^2_{\rm loc}((0,\infty) \times  \mathbb T_{\ell}^{d})$ 
and $\sqrt{R_\ell} \in L^2_{\rm loc}((0,\infty); H^1(\mathbb T_{\ell}^d)).$
Furthermore, we control the symmetric part  
(resp.\ the skew-symmetric part) of $\mathbf{T}_{N,\ell}$  with the
energy dissipation (resp.\ the BD-entropy dissipation) so that we obtain
the expected $L^2_{\rm loc}((0,\infty);L^2(\mathbb T^d_{\ell}))$
regularity. 

\smallbreak

We proceed with item ii) of the definition, the last one being an
obvious corollary to the time regularity  
of $(\sqrt{R_\ell},U_\ell)$ as stated in Definition \ref{def_sol_withdrag}. 
By definition, the pair $(\sqrt{R_\ell},\sqrt{R_\ell}U_{\ell})$ solves
the continuity equation \eqref{eq_renorm_continuity},
identifying the right-hand side of \eqref{RU-drag1-bis} as $\Div
\mathbf T_{N,\ell}.$ 
The compatibility conditions for the tensor $\mathbf{S}_{K,\ell}$
can be seen as a definition.  

\smallbreak 

The main point of the construction is to obtain the momentum equation
in terms of renormalized solution  
\eqref{eq_renorm_momentum}. We give here only the main ideas of the
computation and refer the reader 
to \cite[Section 3]{LacroixVasseur} for more details.
In order to multiply the equation with $\varphi'(U_{\ell}),$ the first
step is to regularize the momentum equation by truncating large and
small values of $\sqrt{R_{\ell}}$ in order to take advantage of the
good integrability properties 
of $R_{\ell}^{1/4} U_{\ell}.$ To this end, we first remark that the
continuity equation reads: 
\[
\partial_t \sqrt{R_{\ell}} + \dfrac{2}{\tau^{2}} R^{1/4}_{\ell} U_{\ell} \cdot\nabla R^{1/4}_{\ell} + \dfrac{1}{2\tau^2} \sqrt{R_{\ell}} \Div U_{\ell} = 0.
\]
Applying the bounds on $\nabla R_{\ell}^{1/4}$ stemming from
\eqref{eq:equivJungel} we obtain  $\partial_t \sqrt{R_{\ell}} \in
L^2_{\rm loc}((0,\infty) \times \mathbb T^{d}_{\ell}).$  Moreover, we also
know that 
$\nabla \sqrt{R_{\ell}} \in L^{\infty}_{\rm loc}((0,\infty);L^2(\mathbb T^d_{\ell})).$
Consequently, for arbitrary $\phi \in C^{1}_c(0,\infty),$  $\phi(R_{\ell}) = \phi(\sqrt{R_{\ell}}^{2})$ enjoys the same time and space integrability. 
On the other hand, we remark that the momentum equation satisfied by 
$R_{\ell} U_{\ell}$ reads:
\[
\partial_t (R_{\ell} U_{\ell}) + \dfrac{1}{\tau^2}\Div (R_{\ell} U_{\ell} \otimes U_{\ell}) = \Div(\sqrt{R_{\ell}}\mathbf S_{\ell}) - F_{\ell},
\] 
where 
\begin{align*}
\mathbf S_{\ell} & =  \dfrac{\nu}{\tau^2} \sqrt{R_{\ell}}\mathbb D(U_\ell) + \dfrac{\eps_{\ell}}{2\tau^2 }(\nabla^2 \sqrt{R_{\ell}} - 
4 \nabla R_{\ell}^{1/4} \otimes \nabla R_{\ell}^{1/4} )+ \left( \dfrac{\nu \dot{\tau}}{\tau} - 1\right) \sqrt{R}_{\ell} \mathbf{I}_{d}, \\
F_{\ell} & = \dfrac{r_{0,\ell}}{\tau^2} U + \dfrac{r_{1,\ell}}{\tau^2} R_{\ell} |U_{\ell}|^2 U_{\ell} + 2y R_{\ell}. 
\end{align*}
Here we denoted by $\mathbf{I}_{d}$ the identity matrix. 
Since  $\sqrt{R}_{\ell} \in L^{2}_{\rm loc}((0,\infty);H^2(\mathbb
T^d_{\ell})) \subset L^2_{\rm loc}((0,\infty);L^{\infty}(\mathbb
T^d_{\ell}))$ ($d \leq 3$),  we have $F_{\ell} \in L^{4/3}_{\rm
  loc}((0,\infty) \times \mathbb T^d_{\ell})$ 
and $\sqrt{R_{\ell}}\mathbf S_{\ell} \in L^1_{\rm loc}((0,\infty)
\times \mathbb T^{d}_{\ell}).$
On the left-hand side of the equation, we have:
\begin{align*}
R_{\ell} U_{\ell} &= \sqrt{R}_{\ell} (\sqrt{R_{\ell}} U_{\ell}) \in L^{2}_{\rm loc}((0,\infty) \times \mathbb T^d_{\ell})\,, \\
R_{\ell} U_{\ell} \otimes U_{\ell} &= \sqrt{R}_{\ell} \, R_{\ell}^{1/4} U_{\ell} \otimes R_{\ell}^{1/4} U_{\ell} \in L^{1}_{\rm loc}((0,\infty) ; L^2(\mathbb T^d_{\ell})).
\end{align*}
We thus have sufficient regularity to multiply the momentum equation
with $\phi(R_{\ell}).$  We obtain:
\begin{align*}
& \partial_{t} (\phi(R_{\ell}) R_{\ell} U_{\ell}) + \dfrac{1}{\tau^2}{\Div} ( R_{\ell}U_{\ell} \otimes \phi(R_{\ell}) U_{\ell})\\
& \quad = {\Div}(\phi(R_\ell) \sqrt{R_\ell}\mathbf S_{\ell}) + \phi(R_{\ell}) F_{\ell} - \sqrt{R_{\ell}}\mathbf{S}_{\ell} \cdot \nabla \phi(R_{\ell}) 
 + (\partial_{t} \phi(R_{\ell}) + U_{\ell} \cdot \nabla \phi(R_{\ell}))R_{\ell} U_{\ell} . 
\end{align*}
At this point, we remark that we may also multiply the continuity
equation \eqref{eq_renorm_continuity} with  
a suitable function of $\sqrt{R}_{\ell}$ in order to replace it with
\[
\partial_{t} \phi(R_{\ell}) + U_{\ell} \cdot \nabla \phi(R_{\ell}) = - \dfrac{1}{\tau^2} \phi'(R_{\ell}) \sqrt{R_{\ell}}{\rm Trace} \mathbf{T}_{N,\ell}.
\]
Introducing $V_{\ell} = \phi(R_{\ell})U_{\ell},$ we have finally, 
\begin{align*}
& \partial_{t} ( R_{\ell}V_{\ell}) + \dfrac{1}{\tau^2}{\Div} ( R_{\ell}U_{\ell} \otimes V_{\ell})\\
& \quad = {\Div}(\phi(R_\ell) \sqrt{R_\ell}\mathbf S_{\ell}) + \phi(R_{\ell}) F_{\ell} - \sqrt{R_{\ell}}\mathbf{S}_{\ell} \cdot \nabla \phi(R_{\ell}) 
 - \dfrac{1}{\tau^2} R_{\ell} U_{\ell} \phi'(R_{\ell})\sqrt{R_{\ell}} {\rm Trace} \mathbf{T}_{N,\ell}. 
\end{align*}
Since $\phi$ truncates the small and large values of $R_{\ell}$ we may rewrite 
\[
V_{\ell} = R^{1/4}_{\ell} U_{\ell} \dfrac{\phi(R_{\ell})}{R^{1/4}_{\ell}} \in L^4_{\rm loc}((0,\infty) \times \mathbb T^d_{\ell} ). 
\]
We are then in position to multiply the $i$-th equation of the momentum equation by $\varphi'(V_{\ell}).$
With the help of Friedrich's lemma we obtain, on the left-hand side 
\[
\(\partial_{t} ( R_{\ell}V_{\ell}) + \dfrac{1}{\tau^2}{\Div} (
R_{\ell}U_{\ell} \otimes V_{\ell})\) \cdot \varphi'(V_{\ell}) 
 = \partial_{t} (\phi(R_{\ell}) R_{\ell} \varphi(V_{\ell})) + \dfrac{1}{\tau^2}{\Div} ( R_{\ell}U_{\ell} \otimes \varphi(V_{\ell}))) ,
\]
and, on the right-hand side:
\begin{align*}
&  \left( {\Div}(\phi(R_\ell)\sqrt{R_{\ell}} \mathbf S_{\ell}) + \phi(R_{\ell}) F_{\ell} - \sqrt{R_{\ell}}\mathbf{S}_{\ell} \cdot \nabla \phi(R_{\ell}) 
 - \dfrac{1}{\tau^2} R_{\ell} U_{\ell} \phi'(R_{\ell})\sqrt{R_{\ell}} {\rm Trace} \mathbf{T}_{N,\ell} \right)\cdot \varphi'(V_{\ell})\\
& = 
{\Div}(\phi(R_\ell)\sqrt{R_{\ell}} \mathbf S_{\ell}\cdot  \varphi'(V_{\ell})) + \phi(R_{\ell})\varphi'(V_{\ell}) \cdot F_{\ell} -
 [\mathbf{S}_{\ell} \cdot \nabla \phi(R_{\ell})] \cdot \varphi'(V_{\ell}) \\
 &\quad
 - \dfrac{1}{\tau^2} \phi'(R_{\ell})\sqrt{R_{\ell}} R_{\ell} U_{\ell}  \cdot  \varphi'(V_{\ell}) {\rm Trace} \mathbf{T}_{N,\ell} 
 - \phi(R_{\ell})  \sqrt{R_{\ell}}\mathbf S_{\ell} : \varphi''(V_{\ell}) \nabla V_{\ell}.
\end{align*}
To obtain \eqref{eq_renorm_momentum}, it remains to approximate the
constant $1$ with a suitable sequence of functions
$(\phi_m)_{m\in\mathbb N}.$ This construction is performed in  
\cite{LacroixVasseur} and \cite{VasseurYu}. We emphasize that, in this
case with drag forces: 
\begin{align*}
 f_{\varphi} &=  \varphi''(U_{\ell})  \mathbf S_{\ell} : \sqrt{R_{\ell}}\nabla U_{\ell}  \in L^1_{\rm loc}((0,\infty) \times \mathbb T^d_{\ell}),\\
 \|f_{\varphi} \|_{L^1_{\rm loc}((0,\infty) \times \mathbb T^d_{\ell})} & \leq \|\varphi''\|_{L^{\infty}([0,\infty))} \left(  \Edrag(R^0_{\ell},U^{0}_{\ell})  + \EBDdrag(R^0_{\ell},U^0_{\ell})\right).
\end{align*}
Finally, the compatibility condition concerning $\mathbf{T}_{N,\ell}$
is obtained by noting that 
for arbitrary $\varphi \in W^{2,\infty}(\mathbb R^d)$ and $j,k \in \{ 1 , \dots, d \}$, we have:
\[
\varphi'(U_{\ell}){R_{\ell}} \partial_ j U_{\ell,k} = \partial_j ( {R_{\ell}} \varphi'(U_{\ell})U_{\ell,k})  -   2\sqrt{R_{\ell}} U_{\ell,k} \varphi'(U_{\ell})  \partial_j \sqrt{R_{\ell}} 
- R_{\ell} U_{\ell,k} \varphi''(U_{\ell}) \partial_{j} U_{\ell},
\]
which is obtained standardly by first regularizing  $\sqrt{R}_{\ell}$
and $U_{\ell}$. So, we have:
\[
\sqrt{R_{\ell}} \varphi'(U_{\ell})\mathbf{T}_{N,\ell,j,k} =
\partial_{j} ( {R_{\ell}}U_{\ell}\varphi'(U_{\ell})U_{\ell,k}) -
2\sqrt{R_{\ell}} U_{\ell,} \partial_{j}\sqrt{R_{\ell}} +
g_{j,k,\varphi} ,
\]
with $g_{j,k,\varphi} \in L^{2}_{\rm
  loc}((0,\infty);L^1(\mathbb{T}^{d}_{\ell}))$ satisfying 
\[
\|g_{j,k,\varphi} \|_{ L^{1}_{\rm loc}((0,\infty) \times
  \mathbb{T}^{d}_{\ell})} \leq \|\varphi''\|_{L^{\infty}([0,\infty))}
\left(  \Edrag(R^0_{\ell},U^{0}_{\ell})  +
  \EBDdrag(R^0_{\ell},U^0_{\ell})\right). 
\]

\subsubsection{Compactness of renormalized solutions and conclusion}
We are now able to prove our main result Theorem \ref{theo:main}.
Since, in any case ({\em i.e.} with or without drag) renormalized solutions to \eqref{RU} are weak solutions 
as defined in Definition \ref{def:weak} (see \cite[Section 4]{LacroixVasseur}), we only show that, when we let the parameter $\ell \to \infty,$ we can extract a subsequence from $(\sqrt{R_{\ell}},\sqrt{R_{\ell}}U_{\ell})_{\ell \in \mathbb N^*}$ 
that converges to a renormalized solution to \eqref{RU} on the whole space $\mathbb R^d.$

First, thanks to the energy and entropy estimates on the one hand, and the 
choice of initial data on the other hand, the sequences $\{ (\sqrt{R_{\ell}},\sqrt{R_{\ell}}U_{\ell},\mathbf{T}_{N,\ell})\}_{\ell}$ 
are uniformly bounded  in the following spaces, respectively:
\begin{align*}
& \sqrt{R_{\ell}} \text{ in } L^{\infty}_{\rm loc}(0,\infty; H^1_{\rm loc} (\R^d)), \\
&  \sqrt R_{\ell}U_{\ell} \text{ in } L^{\infty}_{\rm loc}(0,\infty; L^2_{\rm loc} (\R^d)),\\ 
&\mathbf T_{N,\ell} \text{ in } L^2_{\rm loc}(0,\infty; L^2_{\rm loc}(\R^d)) .
\end{align*}
Furthermore, by the choice of our initial data, we have:
\begin{multline*}
\limsup_{\ell \to \infty} \left( \|\sqrt{R}_{\ell}\|_{L^{\infty}_{\rm loc}(0,\infty; H^1 (\T_{\ell}^d))} +  
 \|\sqrt R_{\ell}U_{\ell}\|_{L^{\infty}_{\rm loc}(0,\infty; L^2 (\T_{\ell}^d))}
+ \|\mathbf T_{N,\ell}\|_{L^2_{\rm loc}(0,\infty; L^2(\mathbb T_{\ell}^d))} \right) \\
\leq C(\sqrt{R_0},\Lambda_0).
\end{multline*} 
Consequently, by a standard Cantor extraction argument, we can construct 
\begin{align*}
& \sqrt{R} \text{ in } L^{\infty}_{\rm loc}(0,\infty; H^1 (\R^d)), \\
&  \sqrt RU \text{ in } L^{\infty}_{\rm loc}(0,\infty; L^2 (\R^d)),\\ 
&\mathbf T_{N} \text{ in } L^2_{\rm loc}(0,\infty; L^2(\mathbb R^d)) ,
\end{align*}
so that, without relabelling the subsequences:
\begin{align*}
& \sqrt{R_{\ell}} \rightharpoonup \sqrt{R} \text{ in } L^{\infty}_{\rm loc}(0,\infty; H^1_{\rm loc} (\R^d))-w*, \\
&  \sqrt R_{\ell}U_{\ell} \rightharpoonup \sqrt{R} U  \text{ in } L^{\infty}_{\rm loc}(0,\infty; L^2_{\rm loc} (\R^d))-w*,\\ 
&\mathbf T_{N,\ell}  \rightharpoonup \mathbf{T}_N  \text{ in } L^2_{\rm loc}(0,\infty; L^2_{\rm loc}(\mathbb R^d))-w .
\end{align*}
In addition,  we have also momentum and (if $\eps>0$) second order
bounds for $\sqrt{R}_{\ell}$ uniformly in $\ell$ 
so that $\sqrt{R}$ enjoys the further estimates:
\[
\epsilon \nabla^{2} \sqrt{R} \in L^2_{\rm loc}(0,\infty;L^2(\mathbb R^d)) \quad \sqrt{\epsilon} \nabla R^{1/4}  \in L^4_{\rm loc}(0,\infty;L^4(\mathbb R^d)) \quad 
\<y\> \sqrt{R} \in L^{\infty}_{\rm loc}(0,\infty,L^2(\mathbb R^d)).
\]
We have now a candidate satisfying item i) of the definition of
renormalized solutions without drag forces on the torus. Furthermore, we can
pass to the weak limit in the energy and entropy estimates on the
torus so that these solutions 
satisfy \eqref{eq:EE+DD<=EE0} and \eqref{eq_EBD_weak}.

We note that the above weak convergences of $\sqrt{R_{\ell}},\sqrt{R_{\ell}}U_{\ell}$ and $\mathbf{T}_{N,\ell}$
are sufficient to pass to the limit in the continuity equation \eqref{eq_renorm_continuity}. Reproducing the arguments for the limits $\eta_1,\eta_2 \to 0$ in the
previous section (see also the proof of Lemma 5.1 in
\cite{LacroixVasseur}), we obtain that  
\[
\sqrt{R}_{\ell} \to \sqrt{R} \text{ in $C([0,\infty);L^2_{\rm loc}(\mathbb R^d))$} .
\]
We note that, since we control the second momentum of
$\sqrt{R}_{\ell}$, the convergence actually holds in 
$ C([0,T];L^2(\mathbb R^d)).$ When $\eps >0,$ by interpolation,
we have also that 
\[
\sqrt{R}_{\ell} \to \sqrt{R} \text{ in }L^4_{\rm
    loc}((0,\infty);H^1_{\rm loc}(\mathbb R^d)) . 
\]
We can then combine the strong convergence of $\sqrt{R_{\ell}}$ and
the weak convergence of $\nabla^2 \sqrt{R_{\ell}}$ to pass to the
limit in the compatibility condition for $\mathbf{S}_{K}.$ 

It remains to pass to the limit in the renormalized momentum equation and the compatibility condition for 
$\mathbf{T}_{N}.$ For this, we can again reproduce the arguments of
\cite{LacroixVasseur} with the only integrability of
$\sqrt{R_{\ell}}.$ We obtain that $R_{\ell} U_{\ell} \to RU$ in
$L^2_{\rm loc}((0,\infty);L^{p}_{\rm loc}(\mathbb R^d))$  for
arbitrary $p < 3/2.$ Introducing $U = RU/R\mathbf{1}_{R>0},$ we
conclude that $R_{\ell} \to R$ and $U_{\ell} \to U$ a.e., and
consequently that $R_{\ell}^{\alpha}\phi(U_{\ell}) \to R^{\alpha}
\phi(U)$  in $L^{p}_{\rm loc}((0,\infty)\times \mathbb R^d)$ for any
bounded  $\phi : \mathbb R^d \to \mathbb R^d,$ $\alpha < 6$   and $p <
6/\alpha.$ 
Given $\varphi \in W^{2,\infty}(\mathbb R^d),$ we remark that the
remainder $f_{\ell,\varphi}$ is a bounded sequence of measures, so
that  
we can extract a weakly converging sequence. The above convergences
are then sufficient to pass to the limit in the renormalized momentum
equations with $\varphi$ satisfied by  $(\sqrt{R_{\ell}}, U_{\ell})$
and obtain \eqref{eq_renorm_momentum}. 
We proceed similarly to pass to the limit in the renormalized compatibility 
condition for $\mathbf T_{N,\ell}$ and obtain the renormalized
compatibility condition for $\mathbf{T}_N.$  This ends the proof.

\section{Global weak solutions to isothermal Korteweg equation}
\label{sec:korteweg}

In this section, we explain how to prove
Proposition~\ref{prop:korteweg}. The idea is the same as in
\cite[Proposition~15]{AnMa09} in the barotropic case, and we present
the specificities of the isothermal case.
\smallbreak

Formally, Proposition~\ref{prop:korteweg} stems from Madelung transform:
consider the solution $\psi\in L^\infty_{\rm loc}(\R;H^1(\R^d))$ to the logarithmic
Schr\"odinger equation
\begin{equation}
  \label{eq:logNLS}
  i\eps \partial_t\psi +\frac{\eps^2}{2}\Delta \psi =
  \psi\log|\psi|^2;\quad \psi_{\mid t=0}=\psi_0.
\end{equation}
Then $(\r,j)= \(|\psi|^2,\eps\IM (\bar\psi\nabla\psi)\)$ is a natural
candidate for the conclusions of
Proposition~\ref{prop:korteweg}. 
Indeed, we compute
\begin{equation*}
  \partial_t\r  = 2\RE \bar \psi\partial_t \psi = -\eps \IM
                 \(\bar\psi\Delta \psi\) = -\Div \(\eps \IM\(\bar
                 \psi\nabla \psi\)\),
\end{equation*}
and, in view of the identity
\begin{equation*}
  \partial_t \nabla \psi = \frac{i\eps}{2}\Delta\nabla \psi
  -\frac{i}{\eps}\nabla \( \psi\log|\psi|^2\),
\end{equation*}
\begin{align*}
  \partial_t j & = \eps\IM \(\nabla \psi \(-\frac{i\eps}{2}\Delta\bar
                 \psi +\frac{i}{\eps}\bar\psi\log|\psi|^2\)\)
+ \eps\IM \(\bar\psi\(\frac{i\eps}{2}\Delta \nabla \bar\psi
                 -\frac{i}{\eps}\nabla \(\psi\log|\psi|^2\)\)\)\\
& = \frac{\eps^2}{4} \nabla \Delta|\psi|^2 -\eps^2 \Div
\(\RE\(\nabla\bar\psi\otimes \nabla\psi\)\) -\nabla |\psi|^2.
\end{align*}
\textcolor{black}{
  The above identities are true in the sense of distributions,
  provided (at least) that $\psi\in H^1(\R^d)$. Therefore, to show
  that $(\r,j)$ is a solution to \eqref{fluide} with $\nu=0$, we
  have to rewrite the term $\Div
\(\RE\(\nabla\bar\psi\otimes \nabla\psi\)\)$. 
}
In view of \cite[Lemma~3]{AnMa09}, for $\psi\in H^1(\R^d)$, there exists $\phi\in
L^\infty(\R^d)$ such that $\psi=\sqrt\r \phi$ a.e. in $\R^d$,
$\sqrt\r\in H^1(\R^d)$, $\nabla\sqrt\r = \RE(\bar\phi\nabla\psi)$, so
that if we set $\sqrt \r u:= \eps\IM(\bar\phi\nabla\psi)$, then
$\sqrt \r u\in L^2(\R^d)$,  $j= \sqrt\r\times \sqrt\r u$ and 
\begin{equation*}
  \eps^2\RE\(\nabla\bar\psi\otimes \nabla\psi\) =\eps^2 \nabla\sqrt\r
  \otimes \nabla\sqrt\r +(\sqrt\r u)\otimes(\sqrt\r u). 
\end{equation*}
In this case,
\begin{equation*}
  \phi(x) = \left\{
    \begin{aligned}
      \frac{\psi(x)}{|\psi(x)|}&\quad \text{if }\psi(x)\not =0,\\
      0 &\quad \text{if }\psi(x) =0,
    \end{aligned}
    \right.
  \end{equation*}
  so the compatibility condition $\sqrt\r u=0$ a.e. on $\{\sqrt\r=0\}$ is
  satisfied. 
Finally, by the definition of $j$,
\begin{equation*}
  \nabla\wedge j = \eps\IM \(\nabla \bar\psi\wedge\nabla \psi\),
\end{equation*}
and \cite[Corollary~13]{AnMa09} yields, for $\psi\in H^1(\R^d)$,
\begin{equation*}
  \nabla\wedge j = 2\nabla\sqrt\r\wedge (\sqrt\r u). 
\end{equation*}
Note
that in the barotropic case considered in
\cite[Proposition~15]{AnMa09}, $p(\r)=\r^\gamma$, $\gamma>1$, instead
of the logarithmic
Schr\"odinger equation \eqref{eq:logNLS}, one faces the more standard
nonlinear Schr\"odinger equation with a power-like nonlinearity,
\begin{equation}\label{eq:NLS}
  i\eps \partial_t\psi +\frac{\eps^2}{2}\Delta \psi =
c_\gamma  |\psi|^{\gamma-1}\psi;\quad \psi_{\mid t=0}=\psi_0,
\end{equation}
for some constant $c_\gamma>0$ whose exact value is irrelevant for the
present discussion. 
\bigbreak

The Cauchy problem for \eqref{eq:logNLS} was solved initially in
\cite{CaHa79}  locally in time for
$\psi_0\in L^2(\R^d)$, using the theory of monotone operators. To
obtain a solution with an $H^1$ regularity, as well as the uniqueness
of this solution, in \cite{CaHa79,CaHa80} (see also
\cite{CazCourant}) the authors have to change the sign in front of the
nonlinearity in \eqref{eq:logNLS}, so the Hamiltonian structure of the
equation directly provides a priori estimates. In the case of
\eqref{eq:logNLS}, the formally conserved energy
\begin{equation}\label{eq:ElogNLS}
  E_{\rm logNLS} = \frac{\eps^2}{2}\int_{\R^d}|\nabla \psi|^2
  +\int_{\R^d}|\psi|^2\log|\psi|^2 ,
\end{equation}
is not helpful because the region $\{|\psi|<1\}$ yields a negative
contribution, and cannot be controlled in terms of the
$H^1$-norm. \textcolor{black}{ This is why in the present case, working
  in $H^1$ is not enough, and a (fractional) momentum is considered
  to, $\psi_0\in \F(H^\alpha)$, that is,
  \[ \int_{\R^d} \<x\>^{2\alpha}|\psi_0(x)|^2\dd x<\infty,\]
  for some $0<\alpha\le 1$. Then \eqref{eq:logNLS} has a unique, global
  solution  $\psi\in L^\infty_{\rm
    loc}(\R;H^1\cap \F(H^\alpha))$.   We refer to \cite{CaGa18} for
  details. The first part of Proposition~\ref{prop:korteweg} follows.}

    \bigbreak

    To conclude and prove the second point of
    Proposition~\ref{prop:korteweg}, introduce $\Psi$ given by
    \begin{equation*}
      \psi(t,x) = \frac{1}{\tau(t)^{d/2}}\Psi\(t,\frac{x}{\tau(t)}\)
      \(\frac{\|\r_0\|_{L^1(\R^d)}}{
          \|\Gamma\|_{L^1(\R^d)}}\)^{1/2}\exp\(i\frac{\dot\tau(t)}{\tau(t)}
        \frac{|x|^2}{2\eps} -i\frac{\theta(t)}{\eps}\), 
      \end{equation*}
      where
      \begin{equation*}
        \theta(t) = d\int_0^t \log\tau(s)\dd s -t\log \(\frac{\|\r_0\|_{L^1(\R^d)}}{
          \|\Gamma\|_{L^1(\R^d)}}\).
      \end{equation*}
    It solves (see \cite{CaGa18})
    \begin{equation}\label{eq:Psi}
      i\eps\partial_t \Psi +\frac{\eps^2}{2\tau(t)^2}\Delta\Psi =
      \Psi\log|\Psi|^2 +|y|^2\Psi;\quad \Psi(0,y) = \psi_0(y)
      \(\frac{\|\r_0\|_{L^1(\R^d)}}{  \|\Gamma\|_{L^1(\R^d)}}\)^{1/2}. 
    \end{equation}
    We check
    \begin{equation*}
      \r(t,x) = |\psi(t,x)|^2 =
      \frac{1}{\tau(t)^d}\left|\Psi\(t,\frac{x}{\tau(t)}\)\right|^2
      \frac{\|\r_0\|_{L^1(\R^d)}}{ \|\Gamma\|_{L^1(\R^d)}},
    \end{equation*}
    so in view of \eqref{eq:uvKorteweg}, $R=|\Psi|^2$, and
    \begin{align*}
      \sqrt\r u(t,x) &= \frac{j(t,x)}{\sqrt{\r(t,x)}}=\frac{\eps\IM
                     \(\bar\psi\nabla \psi\)(t,x)}{|\psi(t,x)|} \\
      &=
      \(\frac{\eps}{\tau(t)^{1+d/2} }\IM\(
      \frac{\Psi}{|\Psi|}\nabla\Psi\) \(t,\frac{x}{\tau(t)}\)
      +\frac{\dot
      \tau(t)}{\tau(t)}\frac{x}{\tau(t)^{d/2}}\left|\Psi\(t,\frac{x}{\tau(t)}\)\right| \)
      \(\frac{\|\r_0\|_{L^1(\R^d)}}{
        \|\Gamma\|_{L^1(\R^d)}}\)^{1/2},\\
      &=
      \frac{\eps}{\tau(t)^{1+d/2} }\IM\(
      \frac{\Psi}{|\Psi|}\nabla\Psi\) \(t,\frac{x}{\tau(t)}\)\(\frac{\|\r_0\|_{L^1(\R^d)}}{
        \|\Gamma\|_{L^1(\R^d)}}\)^{1/2}
      +\frac{\dot
      \tau(t)}{\tau(t)}x\sqrt{\r(t,x)},
    \end{align*}
    hence, in view of \eqref{eq:uvKorteweg},
    \[\sqrt R U= \eps\IM\(\frac{\bar\Psi}{|\Psi|}\nabla \Psi\).
      \]
In view of \cite{CaGa18}, for $\psi_0\in H^1\cap \F(H^1)$,
\eqref{eq:Psi} has a global solution $\Psi\in L^\infty_{\rm
  loc}(\R;H^1\cap \F(H^1)$, which satisfies
\begin{equation*}
  \frac{\dd}{\dd t}\( \frac{\eps^2}{2\tau(t)^2}\|\nabla\Psi(t)\|_{L^2}^2 +\int_{\R^d}
  |\Psi(t,y)|^2\log |\Psi(t,y)|^2\dd y + \int_{\R^d}
 |y|^2  |\Psi(t,y)|^2\dd y\)=-
 \frac{\eps^2\dot(\tau(t)}{\tau(t)^3}\|\nabla\Psi(t)\|_{L^2}^2.
\end{equation*}
Integrating in time and rewriting the quantities involved in this
relation in terms of $(\sqrt R,\sqrt R U)$, we recover \eqref{eq:EE+DD=EE0}.

\appendix

\section{Proof of identity \eqref{identite1}} \label{sec:identite1}

We recall that, the first step in the computation of \eqref{identite1} is to set $\Phi = \chi \nu \nabla \log R / \tau^2$ 
in \eqref{RU-drag-reg2-weak}. This yields:

\[
\begin{aligned}
& \int_0^{\infty} \!\!\! \int_{\mathbb T^{d}_{\ell}}RU \cdot \partial_t \Phi  
+  \int_0^{\infty} \!\!\! \int_{\mathbb T^{d}_{\ell}}\frac{1}{\tau^2} R U \otimes U : \nabla \Phi 
 \\
&\qquad\quad
=  \int_0^{\infty} \!\!\! \int_{\mathbb T^{d}_{\ell}}R (2y \cdot \Phi - \Div \Phi)   
+  r_0 \int_0^{\infty} \!\!\!  \int_{\mathbb T^{d}_{\ell}} \frac{1}{\tau^2}   U \cdot \Phi 
+  r_1 \int_0^{\infty} \!\!\! \int_{\mathbb T^{d}_{\ell}} \frac{1}{\tau^2}  R|U|^2  U \cdot \Phi \\
&\qquad\qquad 
+ \eps^2 \int_0^{\infty} \!\!\! \int_{\mathbb T^{d}_{\ell}} \frac{1}{\tau^2} \left[ \dfrac{\Delta \sqrt{R}}{\sqrt{R}} \Div ( R \Phi ) \right] \\
&\qquad\qquad  
+ \nu  \int_0^{\infty} \!\!\! \int_{\mathbb T^{d}_{\ell}}\frac{1}{\tau^2}  R \D U : \nabla \Phi   
+   \nu  \int_0^{\infty} \!\!\! \int_{\mathbb T^{d}_{\ell}} \frac{\dot \tau}{\tau}  R \Div \Phi \\
&\qquad\qquad   
+ \delta_1  \int_0^{\infty} \!\!\! \int_{\mathbb T^{d}_{\ell}} \frac{1}{\tau^2}  \nabla U : \nabla R \otimes \Phi 
+  \delta_2  \int_0^{\infty} \!\!\! \int_{\mathbb T^{d}_{\ell}}\frac{1}{\tau^2}  \Delta U \cdot \Delta \Phi 
\\
&\qquad\qquad  
+ \eta_1  \int_0^{\infty} \!\!\! \int_{\mathbb T^{d}_{\ell}} R^{-\alpha} \Div \Phi   
+ \eta_2  \int_0^{\infty} \!\!\!  \int_{\mathbb T^{d}_{\ell}}\frac{1}{\tau^2} \Delta^{s+1} R \Delta^s
\left[ \nabla R \cdot \Phi + R \Div \Phi \right].
\end{aligned}
\]

We number the integrals on the right-hand side $I_1$ to $I_9$  successively:
\begin{align*}
I_1 &= \int_0^{\infty} \dfrac{\chi \nu }{\tau^2} \int_{\mathbb T^d_{\ell}} \left( 4 |\nabla \sqrt{R}|^2 - 2dR \right),\, &
I_2 &=  r_0 \int_0^{\infty} \dfrac{\chi \nu }{\tau^4} \int_{\mathbb T^d_{\ell}} U \cdot \nabla \log R,\\
I_3 &= r_1 \int_0^{\infty} \dfrac{\chi \nu }{\tau^4} \int_{\mathbb T^d_{\ell}}  |U|^2 U \cdot \nabla R, &
I_4 &= \eps^2 \int_0^{\infty} \dfrac{\chi \nu }{\tau^4} \int_{\mathbb T^d_{\ell}}  R |\nabla^2 \log R |^2, \\
I_5 &= \int_0^{\infty} \dfrac{\chi \nu^2 }{\tau^4} \int_{\mathbb T^d_{\ell}} R \mathbb D U : \nabla^2 \log R  , &
I_6 & = -  \int_0^{\infty} \dfrac{\chi \nu^2 \dot{\tau} }{\tau^3} \int_{\mathbb T^d_{\ell}} 4|\nabla \sqrt{R}|^2 , \\
I_7 & =  \delta_1  \int_0^{\infty} \dfrac{\chi \nu}{\tau^4} \int_{\mathbb T^{d}_{\ell}}\nabla U : \nabla R \otimes \nabla \log R ,& 
I_8 &= \delta_2  \int_0^{\infty} \dfrac{\chi \nu  }{\tau^4}  \int_{\mathbb T^d_{\ell}} \Delta U \cdot \nabla \Delta \log R  , & \\
I_9 &= \eta_1 \int_0^{\infty} \dfrac{4 \chi \nu  }{\alpha \tau^2}  \int_{\mathbb T^d_{\ell}} \left| \nabla \sqrt{R^{-\alpha}}\right|^2 ,&
I_{10} &=  \eta_2 \int_0^{\infty} \dfrac{ \chi \nu  }{ \tau^4}  \int_{\mathbb T^d_{\ell}} |\Delta^{s+1} R|^2.
\end{align*} 

While, we rewrite the left-hand side:
\begin{align*}
LHS & =-  \left\langle \dfrac{\textrm{d}}{\textrm{d}t} \left[\dfrac{\nu}{\tau^2} \int_{\mathbb T^d_{\ell}} RU \cdot \nabla \log R  \right] , \chi \right \rangle
- \int_0^{\infty} \dfrac{2\chi \nu \dot{\tau}}{\tau^3} \int_{\mathbb T^d_{\ell}} RU \cdot \nabla \log R   \\
   & \quad + \int_{0}^{\infty} \dfrac{\chi \nu }{\tau^2} \int_{\mathbb T^d_{\ell}} RU \cdot \nabla \partial_t \log R +  \int_0^{\infty}  \frac{\chi \nu }{\tau^4}  \int_{\mathbb T^{d}_{\ell}}R U \otimes U : \nabla^2 \log R,
  \end{align*}
where we denote with brackets the duality in the sense of distributions. We proceed by computing the third term (denoted $L_1$)
in the right-hand side of this identity. For this, we remark that differentiating the continuity equation \eqref{RU-drag-reg1}, we obtain (in $L^{2}_{loc}(\mathbb R^+ ; L^2(\mathbb T^d_{\ell}))$):
\[
\partial_t ( R \nabla \log R  ) = - \dfrac{1}{\tau^2} \Div (R \nabla \log R \otimes U ) - \dfrac{1}{\tau^2}  \Div(R \nabla U)  + \dfrac{\delta_1}{\tau^2} \Delta \nabla R, 
\]
splitting the left-hand side of this identity and calling again the continuity equation, we conclude that:
\begin{align*}
R \partial_t\nabla  \log R  &  = \dfrac{1}{\tau^2} \Div(RU) \nabla \log R  - \dfrac{\delta_1}{\tau^2} \Delta R \nabla \log R   \\
& \quad - \dfrac{1}{\tau^2} \Div (R \nabla \log R \otimes U ) - \dfrac{1}{\tau^2}  \Div(R \nabla^{\top} U)  + \dfrac{\delta_1}{\tau^2} \Delta \nabla R.
\end{align*}
We infer then that,  a.e. (in $(0,\infty)$), we have:
\begin{align*}
\int_{\mathbb T^{d}_{\ell}} R U \cdot  \partial_t\nabla  \log R   
& = - \dfrac{1}{\tau^2} \int_{\mathbb T^{d}_{\ell}} RU \otimes U : \nabla^2 \log R  + \dfrac{1}{\tau^2} \int_{\mathbb T^d_{\ell}} R \nabla U^{\top} : \nabla U \\
& \quad  - \dfrac{\delta_1}{\tau^2} \int_{\mathbb T^d_{\ell}} \dfrac{\Delta R}{R} \Div (RU).
\end{align*}
Plugging this identity into $LHS$, we obtain:
\begin{align*}
LHS & =-  \left\langle \dfrac{\textrm{d}}{\textrm{d}t} \left[\dfrac{\nu}{\tau^2} \int_{\mathbb T^d_{\ell}} RU \cdot \nabla \log R  \right] , \chi \right\rangle
- \int_0^{\infty} \dfrac{2\chi \nu \dot{\tau}}{\tau^3} \int_{\mathbb T^d_{\ell}} RU \cdot \nabla \log R   \\
   & \quad + \int_{0}^{\infty} \dfrac{\chi \nu }{\tau^4} \int_{\mathbb T^d_{\ell}} R \nabla U : \nabla^{\top} U - \delta_1  \int_{0}^{\infty} \dfrac{\chi \nu }{\tau^4} \int_{\mathbb T^d_{\ell}} \dfrac{\Delta R}{R} \Div (RU).
  \end{align*}
Finally, combining the computations of the right-hand side and left-hand side, we re-interpret our identity as:
\begin{align*}
& \dfrac{\textrm{d}}{\textrm{d}t} \left[\dfrac{\nu}{\tau^2} \int_{\mathbb T^d_{\ell}} RU \cdot \nabla \log R  \right]
+ \dfrac{2 \nu \dot{\tau}}{\tau^3} \int_{\mathbb T^d_{\ell}} RU \cdot \nabla \log R   \\
& \quad 
+ \dfrac{\eps^2\nu }{\tau^4} \int_{\mathbb T^d_{\ell}} R |\nabla^2\log(R)|^2  
+ \left( \dfrac{\nu}{\tau^2} - \dfrac{\nu^2 \dot{\tau}}{\tau^3} \right) \int_{\mathbb T^d_{\ell}} 4 |\nabla \sqrt{R}|^2  \\
& \quad 
+ \dfrac{4 \eta_1 \nu }{\alpha}  \int_{\mathbb T^d_{\ell}} \left| \nabla \sqrt{R^{-\alpha}}\right|^2 
+ \dfrac{\eta_2 \nu}{\tau^4}   \int_{\mathbb T^d_{\ell}} |\Delta^{s+1} R|^2 \\
& \qquad 
  = \dfrac{2d\nu}{\tau^2} \int_{\mathbb T^d_{\ell}} R 
- \dfrac{r_0 \nu}{\tau^4} \int_{\mathbb T^d_{\ell}} U \cdot \nabla \log R - \dfrac{r_1 \nu}{\tau^4} \int_{\mathbb T^d_{\ell}}  |U|^2 U \cdot \nabla R \\
&  \qquad \qquad 
-  \dfrac{\nu^2}{\tau^4} \int_{\mathbb T^d_{\ell}} R \mathbb D U : \nabla^2 \log R  \\
& \qquad \qquad
- \dfrac{\delta_1 \nu}{ \tau^4}\int_{\mathbb T^{d}_{\ell}}\nabla U : \nabla R \otimes \nabla \log R  
- \dfrac{\delta_2\nu}{\tau^4}  \int_{\mathbb T^d_{\ell}} \Delta U \cdot \nabla \Delta \log R  \\
& \qquad \qquad 
- \dfrac{\delta_1 \nu}{ \tau^4}\int_{\mathbb T^{d}_{\ell}} \dfrac{\Delta R}{R} \Div (RU) 
+ \dfrac{\nu}{\tau^4} \int_{\mathbb T^d_{\ell}}  \nabla U:\nabla^{\top}U   .
\end{align*}  
This completes the proof.

\bibliographystyle{abbrv}
\bibliography{biblio}

\begin{thebibliography}{10}

\bibitem{LogSob}
C.~An{\'e}, S.~Blach{\`e}re, D.~Chafa{\"{\i}}, P.~Foug{\`e}res, I.~Gentil,
  F.~Malrieu, C.~Roberto, and G.~Scheffer.
\newblock {\em Sur les in\'egalit\'es de {S}obolev logarithmiques}, volume~10
  of {\em Panoramas et Synth\`eses [Panoramas and Syntheses]}.
\newblock Soci\'et\'e Math\'ematique de France, Paris, 2000.
\newblock With a preface by Dominique Bakry and Michel Ledoux.

\bibitem{AnMa09}
P.~Antonelli and P.~Marcati.
\newblock On the finite energy weak solutions to a system in quantum fluid
  dynamics.
\newblock {\em Comm. Math. Phys.}, 287(2):657--686, 2009.
\newblock \url{http://dx.doi.org/10.1007/s00220-008-0632-0}.

\bibitem{BD03}
D.~Bresch and B.~Desjardins.
\newblock Existence of global weak solutions for a 2{D} viscous shallow water
  equations and convergence to the quasi-geostrophic model.
\newblock {\em Comm. Math. Phys.}, 238(1-2):211--223, 2003.
\newblock \url{https://doi.org/10.1007/s00220-003-0859-8}.

\bibitem{BD04}
D.~Bresch and B.~Desjardins.
\newblock Quelques mod\`eles diffusifs capillaires de type {K}orteweg.
\newblock {\em Comptes {R}endus {M}\'ecanique}, 332(11):881--886, 2004.

\bibitem{BD06b}
D.~Bresch and B.~Desjardins.
\newblock On the construction of approximate solutions for the 2{D} viscous
  shallow water model and for compressible {N}avier-{S}tokes models.
\newblock {\em J. Math. Pures Appl. (9)}, 86(4):362--368, 2006.
\newblock \url{https://doi.org/10.1016/j.matpur.2006.06.005}.

\bibitem{BD07}
D.~Bresch and B.~Desjardins.
\newblock On the existence of global weak solutions to the {N}avier-{S}tokes
  equations for viscous compressible and heat conducting fluids.
\newblock {\em J. Math. Pures Appl. (9)}, 87(1):57--90, 2007.
\newblock \url{https://doi.org/10.1016/j.matpur.2006.11.001}.

\bibitem{Bre-De-CKL-03}
D.~Bresch, B.~Desjardins, and C.-K. Lin.
\newblock On some compressible fluid models: {K}orteweg, lubrication, and
  shallow water systems.
\newblock {\em Comm. Partial Differential Equations}, 28(3-4):843--868, 2003.
\newblock \url{https://doi.org/10.1081/PDE-120020499}.

\bibitem{BGL19}
D.~Bresch, M.~Gisclon, and I.~Lacroix-Violet.
\newblock On {N}avier-{S}tokes-{K}orteweg and {E}uler-{K}orteweg systems:
  application to quantum fluids models.
\newblock {\em Arch. Ration. Mech. Anal.}, 233(3):975--1025, 2019.
\newblock \url{https://doi.org/10.1007/s00205-019-01373-w}.

\bibitem{BrNoVi17}
D.~Bresch, P.~Noble, and J.-P. Vila.
\newblock Relative entropy for compressible {N}avier-{S}tokes equations with
  density dependent viscosities and various applications.
\newblock In {\em L{MLFN} 2015---low velocity flows---application to low {M}ach
  and low {F}roude regimes}, volume~58 of {\em ESAIM Proc. Surveys}, pages
  40--57. EDP Sci., Les Ulis, 2017.

\bibitem{BruMe10}
S.~Brull and F.~M\'ehats.
\newblock Derivation of viscous correction terms for the isothermal quantum
  {E}uler model.
\newblock {\em ZAMM Z. Angew. Math. Mech.}, 90(3):219--230, 2010.
\newblock \url{https://doi.org/10.1002/zamm.200900297}.

\bibitem{CCH}
R.~Carles, K.~Carrapatoso, and M.~Hillairet.
\newblock Rigidity results in generalized isothermal fluids.
\newblock {\em Annales Henri Lebesgue}, 1:47--85, 2018.
\newblock \url{https://doi.org/10.5802/ahl.2}.

\bibitem{CCH-p}
R.~Carles, K.~Carrapatoso, and M.~Hillairet.
\newblock Large-time behavior of compressible polytropic fluids and nonlinear
  {S}chr{\"o}dinger equation.
\newblock Preprint, archived at
  \url{https://hal.archives-ouvertes.fr/hal-03142668}, 2021.

\bibitem{CaGa18}
R.~Carles and I.~Gallagher.
\newblock Universal dynamics for the defocusing logarithmic {S}chr\"{o}dinger
  equation.
\newblock {\em Duke Math. J.}, 167(9):1761--1801, 2018.
\newblock \url{https://doi.org/10.1215/00127094-2018-0006}.

\bibitem{CazCourant}
T.~Cazenave.
\newblock {\em Semilinear {S}chr\"odinger equations}, volume~10 of {\em Courant
  Lecture Notes in Mathematics}.
\newblock New York University Courant Institute of Mathematical Sciences, New
  York, 2003.

\bibitem{CaHa79}
T.~Cazenave and A.~Haraux.
\newblock \'{E}quation de {S}chr\"{o}dinger avec non-lin\'{e}arit\'{e}
  logarithmique.
\newblock {\em C. R. Acad. Sci. Paris S\'{e}r. A-B}, 288(4):A253--A256, 1979.

\bibitem{CaHa80}
T.~Cazenave and A.~Haraux.
\newblock \'{E}quations d'\'evolution avec non lin\'earit\'e logarithmique.
\newblock {\em Ann. Fac. Sci. Toulouse Math. (5)}, 2(1):21--51, 1980.
\newblock \url{http://www.numdam.org/item?id=AFST_1980_5_2_1_21_0}.

\bibitem{ChaAA}
Q.~Chauleur.
\newblock Global dissipative solutions of the defocusing isothermal
  {E}uler-{L}angevin-{K}orteweg equations.
\newblock {\em Asymptotic Anal.}
\newblock To appear. Archived at
  \url{https://hal.archives-ouvertes.fr/hal-02964009}.

\bibitem{Fei04}
E.~Feireisl.
\newblock {\em Dynamics of viscous compressible fluids}, volume~26 of {\em
  Oxford Lecture Series in Mathematics and its Applications}.
\newblock Oxford University Press, Oxford, 2004.

\bibitem{Gis-VV15}
M.~Gisclon and I.~Lacroix-Violet.
\newblock About the barotropic compressible quantum {N}avier-{S}tokes
  equations.
\newblock {\em Nonlinear Anal.}, 128:106--121, 2015.
\newblock \url{https://doi.org/10.1016/j.na.2015.07.006}.

\bibitem{Jungel}
A.~J\"ungel.
\newblock Global weak solutions to compressible {N}avier-{S}tokes equations for
  quantum fluids.
\newblock {\em SIAM J. Math. Anal.}, 42(3):1025--1045, 2010.
\newblock \url{https://doi.org/10.1137/090776068}.

\bibitem{LacroixVasseur}
I.~Lacroix-Violet and A.~Vasseur.
\newblock Global weak solutions to the compressible quantum {N}avier--{S}tokes
  equation and its semi-classical limit.
\newblock {\em J. Math. Pures Appl. (9)}, 114:191--210, 2018.
\newblock \url{https://doi.org/10.1016/j.matpur.2017.12.002}.

\bibitem{LeFlochShelukhin2005}
P.~G. LeFloch and V.~Shelukhin.
\newblock Symmetries and {G}lobal {S}olvability of the {I}sothermal {G}as
  {D}ynamics {E}quations.
\newblock {\em Arch. Ration. Mech. Anal.}, 175:389--430, 2005.

\bibitem{PlotnikovWeigant}
P.~I. Plotnikov and W.~Weigant.
\newblock Isothermal {N}avier-{S}tokes equations and {R}adon transform.
\newblock {\em SIAM J. Math. Anal.}, 47(1):626--653, 2015.
\newblock \url{https://doi.org/10.1137/140960542}.

\bibitem{RoussetBBK}
F.~Rousset.
\newblock Solutions faibles de l'\'equation de {N}avier-{S}tokes des fluides
  compressibles.
\newblock {\em Ast\'erisque}, pages Exp.\ No.\ 1135, 565--584, 2017.
\newblock S\'eminaire Bourbaki, Vol.\ 2016/17.

\bibitem{VasseurYuInventiones}
A.~F. Vasseur and C.~Yu.
\newblock Existence of global weak solutions for 3{D} degenerate compressible
  {N}avier-{S}tokes equations.
\newblock {\em Invent. Math.}, 206(3):935--974, 2016.
\newblock \url{https://doi.org/10.1007/s00222-016-0666-4}.

\bibitem{VasseurYu}
A.~F. Vasseur and C.~Yu.
\newblock Global weak solutions to the compressible quantum {N}avier-{S}tokes
  equations with damping.
\newblock {\em SIAM J. Math. Anal.}, 48(2):1489--1511, 2016.
\newblock \url{https://doi.org/10.1137/15M1013730}.

\bibitem{VaPl17}
V.~Va\u{\i}gant and P.~I. Plotnikov.
\newblock Estimates of solutions to isothermal equations of the dynamics of a
  viscous gas.
\newblock {\em Mat. Sb.}, 208(8):31--55, 2017.
\newblock \url{https://doi.org/10.4213/sm8901}.

\end{thebibliography}

\end{document}